\newtheorem{lemma} {Lemma} [section]
\newtheorem{proposition} [lemma] {Proposition}
\newtheorem{theorem} [lemma] {Theorem}
\newtheorem{corollary} [lemma] {Corollary}
\newtheorem{definition}[lemma] {Definition}
\newtheorem{example}[lemma] {Example}
\newtheorem{remark}[lemma]{Remark}
\newenvironment{proof}{{\sc Proof:}}{{\hspace*{\fill} $\square$\\}}
\newcommand{\Cour}[1]      {[\![#1]\!]}
\def\R{\mathbb{R}}
\def\gg{{\mathfrak{g}}}
\def\om{\omega}
\def\tom{\tilde{\omega}}
\def\Ker{\mathrm{Ker}}
\def\tM{\tilde{M}}
\def\tE{\tilde{E}}
\def\tA{\tilde{A}}
\def\tmu{\tilde{\mu}}
\def\tU{\tilde{U}}
\def\F{\mathcal{F}}
\def\rra{\rightrightarrows}
\def\a{\alpha}
\def\b{\beta}
\def\uom{\underline{\om}}
\def\bna{\bar{\nabla}}
\def\mus{\mu^\sharp}
\def\tmu{\tilde{\mu}}
\def\teta{\tilde{\eta}}
\def\te{\tilde{e}}
\def\tG{\tilde{G}}
\def\t{\tau}
\def\inv{inv}
\def\tv{\tilde{v}}
\def\tchi{\tilde{\chi}}
\def\tpi{\tilde{\pi}}
\def\trho{\tilde{\rho}}
\def\K{\mathcal{K}}
\def\q{\mathfrak{q}}
\def\gom{\om}
\begin{document}

\title{Quotients of multiplicative forms and Poisson reduction}

\author{Alejandro Cabrera\thanks{{\tt alejandro@matematica.ufrj.br}, {Instituto de Matem\'atica, Universidade Federal de Rio de Janeiro, Av. Athos da Silveira Ramos 149, Cidade Universit\'aria, Ilha do Fund\~ao, 21941-909 Rio de Janeiro - Brasil. }}, Cristian Ortiz\thanks{\texttt{cortiz@ime.usp.br}, {Instituto de Matem\'atica e Estat\'istica, Universidade de S\~ao Paulo, Rua do Mat\~ao 1010, Cidade Universit\'aria, 05508-090 S\~ao Paulo - Brasil.}}}

\maketitle

\begin{abstract}
In this paper we study quotients of Lie algebroids and groupoids endowed with compatible differential forms. We identify Lie theoretic conditions under which such forms become basic and characterize the induced forms on the quotients. We apply these results to describe generalized quotient and reduction processes for (twisted) Poisson and Dirac structures, as well as to their integration by (twisted, pre-)symplectic groupoids. In particular, we recover and generalize several known results concerning Poisson reduction.
\end{abstract}

\tableofcontents


\section{Introduction} 

In this paper we study Lie-theoretic aspects of basic multiplicative forms \cite{bcwz, CDW} on Lie groupoids and consider applications to Poisson, Dirac and other geometries. More precisely, we will consider the set of data $(G,\om,\q)$ consisting of a Lie groupoid $G\rra M$, $\om\in \Omega^k(G)$ a differential form and $\q: G\to \tG$ a Lie groupoid morphism, detail compatibility conditions among them and discuss the contents of the paper using this notation.

\medskip

{\bf Lie theory underlying each ingredient in $(G,\om,\q)$.} In parallel to standard Lie theory for Lie groups and Lie algebras, a Lie groupoid $G$ has an associated infinitesimal analogue consisting of a Lie algebroid $A=Lie(G)$. We will assume that the reader is familiar with the basic elements of this theory. We recall that the main difference between the group and groupoid Lie theory is that not every algebroid is the differentiation of a groupoid, i.e. there is no extension of Lie's third theorem for groupoids. Nevertheless, when $A=Lie(G)$, in which case we say that $A$ is integrable, the analogues of Lie's first two theorems hold. Integrable Lie algebroids were fully characterized in \cite{crainicfernandes}. In particular, differentiation of maps at identities induces a functor
$$ \q \mapsto (Lie(\q):Lie(G) \to Lie(\tG))$$
from the category of Lie groupoids to the category of Lie algebroids. A morphism $\q$ is completely determined by $Lie(\q)$ when $G$ has simply connected source-fibers. Moving towards the differential form $\om$, we observe that when $k=0$ so that $\om:G\to \R$ is a smooth function, a natural compatibility condition between $\om$ and the Lie groupoid structure on $G$ is 
$$ \om(gh)=\om(g)+\om(h), \text{ for all composable $g,h\in G$,}$$
thus defining a groupoid $1$-cocycle. This condition can be naturally generalized to higher differential forms $\om\in \Omega^k(G)$ defining so-called multiplicative forms on $G\rra M$. These multiplicative forms can be differentiated yielding infinitesimally multiplicative forms on $A$ or, equivalently, $A$-morphic linear forms which we denote $Lie(\om)$ \cite{bcwz,bursztyncabrera}. The underlying Lie theory for such ``morphic geometric structures" has been intensively studied \cite{MX, MX2, bco, bursztyncabrera} mainly motivated by its applications in Poisson geometry, which we now recall.

\medskip

{\bf Motivation and the link to Poisson and related geometries.}
Morphic structures are well-known to be interesting tools to study underlying geometric structures. For example, a Lie algebroid endowed with a morphic $2$-form which is non-degenerate is equivalent to a Poisson structure on its base \cite{CDW}. If the non-degeneracy condition is relaxed in a particular controlled way, the induced structure on the base is a Dirac structure \cite{bcwz}. Thinking in Lie theoretic terms, such objects are integrated by Lie groupoids endowed with multiplicative $2$-forms satisfying corresponding non-degeneracy conditions. A paradigmatic case is that of a symplectic groupoid \cite{CDW, CF, MX2}, which provides a Lie theoretic integration of the underlying Poisson structure on its manifold of objects.

On the other hand, symmetries and conserved quantities lead to reduction and quotient procedures for Poisson and Dirac structures. Many instances of these have been studied in the literature, with increasing degree of generality as to what the initial reduction data can be. See \cite{CZ12} for a setting particularly relevant to this paper. Thinking of Poisson and Dirac structures in terms of morphic forms, in this paper we provide a general quotient scheme in which the initial reduction data can be very general and encompasses most known procedures. By construction, the outcome of the reduction procedure is a Poisson or Dirac structure and the formalism allows us to address the question of how to provide a Lie theoretic integration of these quotients. The key technique is to consider quotients of multiplicative forms defined on the Lie groupoids behind the quotient data. Another interesting feature of our formalism is that it allows us to extend the results to both the cases of \emph{twisted} and \emph{higher} (where $k\geq 2$) Poisson and Dirac structures \cite{BMR} in a natural, almost effortless, way.

\medskip

{\bf Basic forms and the role of special simple quotients.} Following the above motivations, in the data $(G,\om,\q)$ we will be assuming that $\om$ is $\q$-basic so that there is an induced $k$-form $\tom\in \Omega^k(\tG)$ such that $\om=\q^*\tom$. Moreover, we restrict our attention to cases in which $\q$ defines a {\bf simple quotient} of the Lie groupoid $G\rra M$. By this nomenclature, we mean that $\q$ defines a fibration of Lie groupoids, in the sense of K. Mackenzie \cite[\S 2.4]{mackenziebook}, and that the usual $\q$-fibers are connected. The reason for the second condition is to have the standard infinitesimal characterization of $\om$ being $\q$-basic as being horizontal and infinitesimally invariant. The condition of $\q$ being a fibration of Lie groupoids is easier to understand at the infinitesimal level (see \cite[\S 4.4]{mackenziebook} and \cite{higgins-mackenzie}): $Lie(\q)$ must be fiberwise surjective and the Lie algebroid structure on $Lie(\tG)$ is obtained as a quotient of the one defining $Lie(G)$. The compatibility condition between $Lie(\om)$ and $Lie(G)$, saying that the form is morphic for the algebroid structure, is then automatically transported to the quotient structures. Similarly, following \cite{mackenziebook}, we can regard a fibration $\q:G\to \tG$ as a quotient map where the structure in $\tG$ can be obtained from the one on $G$ by a quotient procedure and the compatibility condition saying that $\tom$ is multiplicative in $\tG$ also follows from this perspective. 
The nomenclature ``simple quotient" comes from thinking of any smooth surjective submersion $X\to \tilde X$ with connected fibers as the quotient map onto the leaf space of a simple foliation on $X$. We will also provide a detailed study of the particular case in which the quotient $\q$ is defined by a foliation tangent to the kernel of $\om$,
$$ X \in Ker(\om) \iff i_X \om = 0.$$
For the distribution $Ker(\om)$ to be integrable and for the underlying foliation to define a simple quotient, additional conditions need to be required, and we summarize them by saying that $\om$ is {\bf kernel-reducible}. Observe that the underlying quotient form $\tom$ will have trivial kernel in this case, which makes it, in particular, an interesting source for non-degenerate morphic forms as in the Poisson-geometric motivation.

\medskip

{\bf Main results and outline.} We are now ready to provide a summary of our main results. These concern data $(G,\om,\q)$ as above in which $\om$ is a multiplicative $k$-form on $G$ and $\q$ defines a simple quotient of $G\rra M$ such that $\om$ is $\q$-basic. The infinitesimal analogue is given by data $(A,\om_A,q)$ where $A$ is a Lie algebroid, $\om_A$ is a linear $A$-morphic $k$-form and $q$ defines a simple quotient of $A$ such that $\om_A$ is $q$-basic.

In Section \ref{subsec:general} below, we recall elementary facts about basic differential forms on manifolds, introduce the general nomenclature to be used and highlight the case in which the underlying quotient is defined by the kernel of the form.

In Section \ref{sec:vbalg}, we provide the first set of results involving the characterization of conditions for the infinitesimal data $(A,\om_A,q)$ in terms of underlying simple pieces which we refer to as \emph{components}. In particular, we characterize when a given simple quotient $q$ of vector bundles is such that both $A$ and $\om_A$ are $q$-basic inducing such structures on the quotient and provide formulas in terms of the original data. We also study the case in which $\om_A$ is kernel-reducible and provide a characterization of the underlying conditions in terms of components.

In Section \ref{sec:mult}, we study the data $(G,\om,\q)$ focusing on providing infinitesimal characterization of the underlying compatibility conditions in terms of $(Lie(G),Lie(\om),Lie(\q))$. In particular, we show that if $G$ has connected source fibers and $Lie(\om)$ is $Lie(\q)$-basic, then $\om$ is $\q$-basic and we provide formulas characterizing the quotient form $\tom$. We also include a detailed study of kernel-reducible multiplicative forms. In particular, we provide an infinitesimal criterion for $Ker(\om)$ having constant rank and being involutive as a distribution. We think that some of the underlying technical results could be of independent interest.

In Section \ref{sec:Applications}, we apply the general results to describe generlized quotient procedures for Poisson and Dirac structures. We regard data of  the form $(A,\om_A,q)$ for which the quotient $\tom_A$ satisfies the appropriate non-degeneracy conditions as {\bf Poisson (resp. Dirac) quotient data} and describe the underlying quotient geometry. We also apply the Lie theoretic results to provide integration schemes for these quotient structures. Moreover, we show how several results in the literature involving quotient or reduction procedures can be seen as particular cases of our constructions and introduce generalizations to both twisted and higher Poisson and Dirac structures.

\medskip

{\bf Comments about integrability of simple quotients.} The integrability problem for simple quotient maps is non-trivial and is not addressed in this paper. Namely, given a simple quotient $q$ of a Lie algebroid $A$, the integration problem consists in characterizing when there are integrations $G$ of $A$ and $\q$ of $q$ such that $\q$ defines a simple quotient of $G\rra M$. Notice that, along the lines of this paper, we would also need to include the additional requirement that $G$ supports an integration $\om$ of a given $\om_A$. Several particular cases of this integration problem appear in the literature and, from these, is clear the the source-simply-connected integration does not provide a solution in general and adhoc techniques have to be considered. A particular case in which the quotient $q$ is of ``pullback type" (see Remark \ref{rmks:pullbackq}) and $k=2$ can be treated by the techniques introduced by D. \'Alvarez \cite{Alv} and the outcome is a complete characterization of the underlying integrability conditions (see Example \ref{ex:intqpullbacktype}). For general $q$, as far as the authors know, the integrability problem has to be studied in a case-by-case basis and a similar discussion applies to the quotient map behind $Ker(\om)$. In this paper, we assume that integrations $\q$ and $\om$ can be found and focus on the study of the compatibility conditions between $\q$ and $\om$. In the case of kernel-reduction, we assume that, once $Ker(\om)\subset TG$ is established to define a regular integrable distribution, the underlying foliation defines a simple quotient into its leaf space (leaf spaces of multiplicative foliations were studied in general by M. Jotz in \cite{J}).

\bigskip

{\bf Acknowledgments:} A.C. would like to thank Henrique Bursztyn and Rui Loja-Fernandes for conversations about the contents of this paper. AC was supported by CNPq grants 305850/2018-0 and 429879/2018-0 and by FAPERJ grant JCNE  E­26/203.262/201. C.O. was supported by grants 2016/01630-6 and 2019/14434-9 Sao Paulo Research Foundation. C.O. would like to thank the Institute of Mathematics of the University of Antioquia - Medellin for the hospitality while part of this work was carried out. In particular, C. O. thanks Pedro Hernandez Rizzo and Omar Saldarriaga for all the support which made possible a research visit to Medellin during the second half of 2019.

\subsection*{Simple quotients and basic differential forms} \label{subsec:general}
Let $X$ be a manifold. Given a regular foliation $\F$ on $X$, we denote by $T\F \subseteq TX$ the integrable regular distribution given by the tangent spaces to the leaves of $\F$. We recall that a foliation is {\bf simple} if the leaf space $X/\F$ admits a smooth structure such that the quotient map 
$$ q: X \to X/\F$$
defines a surjective submersion. In this case, the smooth structure on the quotient is unique. 

Conversely, given a surjective submersion 
$$ q: X \to \tilde X$$
with connected fibers, there is an induced simple foliation $\F:=\F(q)$ given by the $q$-fibers such that $\tilde X = X/\F(q)$. In this case, we say that $q$ defines a {\bf simple quotient of $X$}. The information behind a simple quotient of $X$ is encoded uniquely by the underlying foliation $\F$ or, equivalently, by the tangent distribution $T\F=Ker(Tq)$.

A differential form $\omega \in \Omega^k(X)$ is {\bf $q$-basic} if there exists $\tilde \omega\in \Omega^k(\tilde X)$ such that
$$ \omega = q^*\tilde \omega.$$

In this case, the $k$-form $\tilde \omega\in \Omega^{k}(\tilde X)$ is uniquely defined by the previous equation. We will sometimes refer to it as 
$$ \tilde \omega = q_* \omega.$$

Since the fibers of a simple quotient $q$ are connected, a $k$-form $\omega$ is $q$-basic if and only if
\begin{enumerate}
	\item $\omega$ is {\bf $q$-horizontal}: $i_V\omega =0$ for all $V\in Ker(Tq)$,
	\item $\omega$ is {\bf infinitesimally $q$-invariant}: $L_V\omega =0$ for all $V\in \mathfrak{X}(X)$ with $\ Tq(V)=0$.
\end{enumerate}
A particularly interesting situation arises when $\om\in \Omega^k(X)$ has {\bf constant rank}, meaning that 
$$ x\mapsto \mathrm{dim}(Ker(\om|_x)),$$
does not depend on $x\in X$, where $Ker(\om|_x):= \{ V\in T_xX: i_V \om|_x = 0 \}$.

In this case, assuming that 
$$Ker(\om|_x) \subset Ker(d\om|_x) \ \forall x\in X,$$
a direct application of the Cartan formula implies that $Ker(\om)\subset TX$ defines a regular involutive distribution on $X$. We denote by $\F_{Ker(\om)}$ the induced foliation on $X$. Assuming $\F_{Ker(\om)}$ is simple, defining a simple quotient $q:X\to \tilde X=X/\F_K$ onto the leaf space, we get that $\om$ is automatically $q$-basic, hence it induces a $k$-form $q_*\om$ on $\tilde X$ which has trivial kernel. In this case, we say that $\om$ is {\bf kernel-reducible}.

\medskip

In this paper, we consider the above simple picture in the cases in which $X=E$ is the total space of a Lie algebroid and $X=G$ is the space of arrows of a Lie groupoid. In both situations we assume that $\omega$ and $q$ are suitably compatible with the underlying structure on $X$.  


\section{Quotients of morphic forms on vector bundles and Lie algebroids}\label{sec:vbalg}

\subsection{Preliminaries: simple quotients of vector bundles and Lie algebroids}\label{subsec:simpleqvec}

In this subsection, we consider simple quotients in the categories of vector bundles and of Lie algebroids. We describe the data behind such quotients in terms of ``components" which are simpler to deal with. The general constructions underlying this subsection have been developed in \cite{mackenziebook} and we extract from that reference the key concepts and results that we will use in the main body of the paper.

\subsubsection*{Simple quotients of vector bundles and their components}

We begin by defining simple quotients of vector bundles.
\begin{definition}
A {\bf simple quotient of a vector bundle} $E\to M$ is a vector bundle morphism
$$ (q,q_M): (E\to M) \to (\tilde E\to \tilde M)$$
such that $q_M:M\to \tilde M$ is a surjective submersion with connected fibers and $q:E\to \tilde E$ is fiberwise surjective. 
\end{definition}
Notice that $q_M$ can be recovered from $q$ by restriction to the zero section. Also, if we think of the total spaces as manifolds, then $q$ defines a surjective submersion with connected fibers. We sometimes refer to the quotient bundle as to
$$ \tilde E = q_* E.$$

The discussion below follows \cite[\S 2.1]{mackenziebook}, and we review the main results to fix notations and ideas. The simple foliation $\F(q)$ of $E$ given by the $q$-fibers admits a simplified description in terms of underlying "components", which we first axiomatize. 

\begin{definition} 
We say that a triple $(q_M,K,\Delta)$ defines {\bf quotient data} for $E\to M$ if:
\begin{enumerate}
	\item $q_M:M \to \tilde M$ is a surjective submersion with connected fibers onto a manifold $\tilde M$,
	\item $K\subseteq E$ defines a vector subbundle over $M$,
	\item $\Delta$ defines a smooth assignment taking a pair of points $x,y\in M$ on the same $q_M$-fiber, $q_M(x)=q_M(y)$, to a linear isomorphism 
	$$ \Delta_{x,y}: E_y/K_y \overset{\sim}{\to} E_x/K_x, $$
	satisfying
	$$ \Delta_{x,x} = id_{E_x/K_x}, \ \Delta_{x,y}\circ \Delta_{y,z} = \Delta_{x,z}.$$
\end{enumerate}
\end{definition}
Notice that the $\Delta$ above can be described as a representation of the submersion groupoid $M\times_{\F(q_M)} M \rightrightarrows M$  associated to the foliation $\F(q_M)$ on the fiberwise quotient vector bundle $E/K \to M$. See \cite[Def. 2.1.5]{mackenziebook}, where the pair $(K,\Delta)$ is called a \emph{subbundle system in $E$}.

\begin{remark}\label{rmk:integrablebna}
	The representation $\Delta$ can be differentiated to a partial connection 
	\begin{eqnarray*}\bna&: &\Gamma(Ker(Tq_M)) \times \Gamma(E/K) \to \Gamma(E/K),\\ 
		 \bna_v(\bar e)&:=& \frac{d}{d\epsilon}|_{\epsilon =0} \Delta_{x,y(\epsilon)} \bar e(y(\epsilon))\text{, }v=\frac{d}{d\epsilon}|_{\epsilon =0}y(\epsilon) 
	\end{eqnarray*}
	which is flat. In this case, we say that $\bna$ is {\bf integrable} to a representation of $M\times_{\F(q_M)} M$. Notice that the data $(q_M,K,\bna)$ define the distribution $T\F(q) \subset TE$.
	Conversely, given $(q_M,K,\bna)$ with $\bna$ a flat connection as above, they uniquely determine an integrable regular distribution $T\F_{(q_M,K,\bna)} \subset TE$ which is \textbf{linear}, meaning that it defines a sub-double vector bundle of $TE$, see \cite{mackenziebook}). The underlying quotient $E\to E/\F_{(q_M,K,\bna)}$ defines a simple quotient of $E\to M$ iff $\bna$ integrates to a representation of $M\times_{\F(q_M)} M$.
\end{remark}
Given a simple quotient $q$ of $E\to M$, we define its {\bf components} to be $(q_M,K,\Delta)$ where
\begin{equation}\label{eq:qcomponents} K_x : = Ker(q|_{E_x}), \ \Delta_{x,y} =  (q|_{E_x} mod \ K_x)^{-1} \circ (q|_{E_y} mod \ K_y).\end{equation}
It is easy to verify that the components of $q$ define quotient data for $E\to M$ and that, set-theoretically, the components completely determine the quotient map $q$.

Conversely, given quotient data $(q,K,\Delta)$ for $E\to M$, there is an induced foliation $\F=\F_{(q,K,\Delta)}$ on the total space $E$ defined by
$$  \ E_x \ni e_x \text{ is on the same leaf as } e'_y \in E_y \iff q_M(x)=q_M(y) \ \text{and } \Delta_{x,y}(e'_y \ mod \ K_y) = e_x \ mod \ K_x .$$
It is easy to verify that this foliation is regular. We sometimes consider the underlying equivalence relation $e \sim e'$ when $e,e' \in E$ are in the same leaf of $\F$.

\begin{example}[Pullback bundles and $K=0$] \label{ex:pullbackquot}

	Given a vector bundle $\tilde E \to \tilde M$ and a simple quotient $q_M:M \to \tilde M$, there is a natural set of quotient data $(q_M,K,\Delta)$ for the pull-back bundle $E:=q_M^*\tilde E \to M$, defined as follows. Recall that, as a set,
	$$ q_M^*\tilde E = \{ (x, \tilde e_{\tilde x}): x \in M, \ \tilde e_{\tilde x} \in \tilde E|_{\tilde x}, \ q_M(x)=\tilde x \}.$$ 
	The quotient data is 
	$$ K = 0_M \ \text{the zero section,} \ \Delta_{x,y}(y,\tilde e_{q_M(y)}) = (x, \tilde e_{q_M(y)}) \in \tilde E_{q_M(y)}$$
	so that $\Delta$ gives the natural identification between vector fibers in $q_M^*\tilde E$. The quotient of $q_M^*\tilde E$ by the induced foliation $\F=\F_{(q_M,K,\Delta)}$ yields the original $\tilde E$,
	$$ \tilde E \simeq (q_M^*\tilde E) /\F,$$
	where the identification map is induced by the projection map $(x,\tilde e_{q_M(x)}) \mapsto \tilde e_{q_M(x)}$. Conversely, given $(q_M,K,\Delta)$ with $K=0_M$, let $\tE = E/\sim$ where $e_x \sim e'_y \iff e_x=\Delta_{x,y}e'_y$ for $q_M(x)=q_M(y)$. Then, there is a unique vector bundle structure on $\tE$ over $\tM$, with projection $[e_x]\mapsto q_M(x)$, such that $$ E\to q_M^*\tE, e_x \mapsto (x,[e_x]) \ \text{is an isomorphism.}$$
	Indeed, $\tM$ can be covered with domains of local sections $\sigma:\tilde U \subset \tM\to M$ of $q_M$ with $$  \sigma^*E \simeq\tE|_{\tilde U}, \ (\tilde x, e_{\sigma(\tilde x)}) \mapsto [e_{\sigma (\tilde x)}].$$
	This last fact follows since, having the reference point $\sigma(q_M(x))$ in the $q_M$-fiber of $q_M(x)$ implies $e'_y \sim e_{\sigma (q_M(x))} \ \iff \ e'_y = \Delta_{y,\sigma(q_M(x))} e_{\sigma(q_M(x))}$. See also \cite[Thm. 2.1.2]{mackenziebook}.
\end{example}

Given quotient data $(q_M:M\to \tilde M,K,\Delta)$ for $E\to M$, applying the construction of the previous example to $E/K\to M$, it follows that the foliation $\F=\F_{(q_M,K,\Delta)}$ is simple and that there exists a unique vector bundle structure on $E/\F$ over $\tilde M$ such that the quotient map $q: E \to E/\F$ defines a simple quotient of $E\to M$ with $\F_{(q_M,K,\Delta)}=\F(q)$. We summarize the discussion in the following:
\begin{proposition} 
	The assignment taking a simple quotient to its components, $q \mapsto (q_M,K,\Delta)$ as defined in eq. \eqref{eq:qcomponents}, defines a 1-1 correspondence between simple quotients of $E\to M$ and quotient data for $E\to M$.
\end{proposition}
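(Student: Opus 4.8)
The plan is to establish the claimed bijection by exhibiting the two directions and checking they are mutually inverse. The excerpt has already set up both directions: the forward map $q \mapsto (q_M, K, \Delta)$ via equation \eqref{eq:qcomponents}, and the backward construction that sends quotient data $(q_M, K, \Delta)$ to the simple quotient $q : E \to E/\F_{(q_M,K,\Delta)}$, where the existence and uniqueness of the vector bundle structure on $E/\F$ over $\tilde M$ has been secured in the paragraph immediately preceding the statement (via the reduction to the $K=0$ case of Example \ref{ex:pullbackquot} applied to $E/K \to M$). So the substance of the proof is to verify that these two assignments are inverse to each other.

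First I would check that the forward map genuinely lands in quotient data. This is the sentence already asserted as ``easy to verify": one confirms $K_x = Ker(q|_{E_x})$ is a subbundle (constant rank, because $q$ is fiberwise surjective onto the fixed-rank $\tilde E$, so $\dim K_x = \mathrm{rank}(E) - \mathrm{rank}(\tilde E)$ is constant), that the $\Delta_{x,y}$ defined there are well-defined linear isomorphisms $E_y/K_y \to E_x/K_x$, and that they satisfy the normalization $\Delta_{x,x} = id$ and the cocycle identity $\Delta_{x,y}\circ\Delta_{y,z} = \Delta_{x,z}$, which follow immediately from the formula $\Delta_{x,y} = (q|_{E_x} \bmod K_x)^{-1}\circ(q|_{E_y}\bmod K_y)$. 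Smoothness of $\Delta$ follows from smoothness of $q$ together with local triviality.

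Next I would verify both round-trips. Starting from quotient data $(q_M, K, \Delta)$, passing to the induced simple quotient $q$ and then extracting its components: the kernel of the constructed $q$ on $E_x$ is exactly $K_x$ by construction (two elements of $E_x$ are identified iff their classes mod $K_x$ agree under $\Delta_{x,x} = id$), and the induced identification on the quotient bundle recovers precisely $\Delta$ — this is the content of $\F_{(q_M,K,\Delta)} = \F(q)$ noted just before the statement. Conversely, starting from a simple quotient $q$, forming its components and then reconstructing a simple quotient $q'$: the reconstructed foliation $\F_{(q_M,K,\Delta)}$ is by definition the partition of $E$ into the $q$-fibers (since $e_x \sim e'_y$ iff $q_M(x) = q_M(y)$ and $q(e_x) = q(e_y)$ in $\tilde E$), hence $q$ and $q'$ have the same fibers; by the uniqueness of the smooth/vector-bundle structure on a leaf space (stated in \S\ref{subsec:general} and in the preceding paragraph) the two simple quotients coincide.

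The main obstacle is not any single computation but the well-definedness and smoothness bookkeeping at the interface between the two constructions — specifically, confirming that the leaf space $E/\F$ carries a \emph{vector bundle} structure over $\tilde M$ (not merely a smooth manifold structure) for which $q$ is a vector bundle morphism, and that this structure is unique. I would handle this by invoking the preceding paragraph's reduction to Example \ref{ex:pullbackquot}: applying the $K=0$ analysis to the fiberwise quotient $E/K \to M$ produces the bundle structure and the local isomorphisms $\sigma^*E \simeq \tilde E|_{\tilde U}$ over a cover by local sections $\sigma$ of $q_M$, and the cocycle identity for $\Delta$ guarantees these local trivializations glue consistently. With that structural input in hand, the two inverse-checks above reduce to the fiberwise statements already recorded, completing the $1$-$1$ correspondence. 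This also appeals, for the general framework, to \cite[Thm. 2.1.2]{mackenziebook}.
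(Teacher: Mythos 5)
Your proposal is correct and takes essentially the same route as the paper, whose own ``proof'' is precisely the discussion preceding the Proposition: the forward map via eq.~\eqref{eq:qcomponents}, the backward construction obtained by applying the $K=0$ analysis of Example~\ref{ex:pullbackquot} to $E/K\to M$ (invoking \cite[Thm.~2.1.2]{mackenziebook}), and the uniqueness of the induced vector bundle structure on the leaf space. You have merely organized these ingredients explicitly into the two round-trip checks, which is exactly the verification the paper leaves to the reader.
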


\begin{remark}[Splitting and coordinate description] \label{rmk:kerTqcoord}
	Notice that when $q$ is a simple quotient of $E\to M$ with underlying components $(q_M,K,\Delta)$, then there exists a bundle isomorphism
	$$ E \simeq K \oplus_M q_M^*\tilde E,$$
	parametrized by linear splittings of the epimorphism $E \to E/K\simeq q_M^*\tE$. For later reference, we also give here an explicit description of tangent to the $q$-fibers, $Ker(Tq)\subset TE$, using adapted coordinates. Given a basis of local sections $\{e_a:U\subset M\to E\}, $ there is a naturally induced trivialization
	$$\psi: U \times \R^{rk(E)} \to E|_U, \ (x,u) \mapsto u^a e_a(x). $$
	We choose the basis in the form $\{e_k\}\cup \{e_\a\}$ where the first elements $e_k$ span $K$ and that the rest $e_\a$ are $q_M$-invariant, meaning $e_\a(x)=e_\a(y)$ if $q_M(x)=q_M(y)$, and induce a basis for $E/K$. In these coordinates,
	$$ Ker(Tq|_{E|_U}) = T\psi\left(  \{(\dot x, \dot u)\in T_xU \times \R^{rk(E)}: D_{x}q_M(\dot x)=0, \dot u^\a=0 \} \right).$$
\end{remark}

\begin{remark}[Basic vector bundles vs. basic differential forms]
	For a vector bundle to be ``$q_M$-basic" (as in Example \ref{ex:pullbackquot}) the infinitesimal condition of having a flat connection along the $q_M$-fibers is not enough, one needs the representation $\Delta$ of the submersion groupoid $M\times_{\F(q_M)} M$ which allows to identify different fibers. For functions or differential forms, the criterion is simpler: since the $q_M$-fibers are connected, it is enough to verify it infinitesimally through contraction and Lie derivative as recalled in Section \ref{subsec:general}.
\end{remark}

We now provide some examples to be used later.
\begin{example}\label{ex:normalquot}
	Given a surjective submersion $q_M: M\to \tM$ and taking $E=TM$, we describe here quotient data for $E$ such that the corresponding quotient yields $\tE= T\tM$. To this end, we consider $K=Ker(Tq_M) \subset TM$ and we notice that $E/K \simeq q_M^*(T\tM)$ where the identification is given by $$ e_x \ mod \ Ker(T_xq_M) \mapsto (x,T_xq_M(e_x)), \ e_x \in T_xM.$$
	Using this identification, we induce a $\Delta^\nu$ from the one given on $q_M^*(T\tM)$ in Example \ref{ex:pullbackquot}. We denote the corresponding simple quotient by $$q_M^\nu:TM \to T\tM.$$
\end{example}

\begin{example}\label{ex:conormalquot}
	The dual picture of the previous example goes as follows. Given a surjective submersion $q_M: M\to \tM$, we consider $E=\mathrm{Ann}(Ker(Tq_M))\subseteq T^*M$ the annihilator of the $q_M$-fiber directions. We take $K=0_M \subset E$ the zero section and notice that $E \simeq q_M^*(T^*\tM)$ where the identification is given by $$ \tilde \alpha \mapsto q_M^*\tilde \alpha, \ \tilde \alpha \in T^*\tM.$$
	Using this identification, we induce a $\Delta^{\nu^*}$ from the one given on $q_M^*(T^*\tM)$ in Example \ref{ex:pullbackquot}. We denote the corresponding simple quotient by
	$$q_M^{\nu^*}:\mathrm{Ann}(\ker(Tf)) \to T^*\tM.$$
\end{example}
Now we proceed to discuss how the quotient of vector bundles described before can be extended to morphisms. 
\begin{definition}\label{def:basicvbmorph}
	Let $(q_j,q_{M_j})$ be a simple quotient of $E_j\to M_j$, for $j=1,2$. We say that a vector bundle morphism 
	$$ (\Phi,\phi): (E_1\to M_1) \to (E_2\to M_2)$$
	is {\bf $(q_1,q_2)$-basic} if there exists a vector bundle morphism 
	$$ (\tilde \Phi,\tilde \phi): (q_1)_*E_1 \to (q_2)_*E_2$$
	such that $q_2 \circ \Phi = \tilde \Phi \circ q_1$.
\end{definition}
In this case, the morphism $\tilde \Phi$ is unique and we denote it by
$$ \tilde \Phi = (q_1,q_2)_*\Phi.$$
We now characterize basic morphisms in terms of the underlying components $(q_{M_j}:M_j\to \tM_j ,K_j,\Delta^j)$ of $q_j$, for $j=1,2$.
\begin{lemma} \label{lma:basicmorphvb}
With the notations above, a morphism $(\Phi,\phi)$ is $(q_1,q_2)$-basic iff
\begin{enumerate}
	\item $\phi(\F(q_{M_1}))\subseteq \F(q_{M_2})$, uniquely defining a map $\tilde \phi:\tilde M_1 \to \tilde M_2$ via
	$$ q_{M_2}\circ \phi = \tilde \phi \circ q_{M_1},$$
	\item $\Phi(K_1) \subseteq K_2$ so that there is an induced vector bundle morphism $\bar \Phi: (E_1/K_1) \to (E_2/K_2)$ covering $\phi$,
	\item $\bar \Phi$ commutes with the $\Delta$'s:
	$$ \bar \Phi \circ \Delta^1_{x,y} = \Delta^2_{\phi(x),\phi(y)} \circ \bar \Phi , \text{whenever} \ q_{M_1}(x)=q_{M_1}(y).$$
	Moreover, since the $q_M$-fibers are connected, (3.) above is equivalent to its infinitesimal version: $\bar \Phi\circ \bna^1-(\phi^*\bna^2) \circ \bar \Phi =0$, where $\bna^k$ denote the differentiation of $\Delta^k$, for $k=1,2$, as in Remark \ref{rmk:integrablebna}, and we denoted $\bar \Phi: \Gamma_{M_1} (E_1/K_1) \to \Gamma_{M_1}(\phi^*(E_2/K_2))$ the induced map on sections and $\phi^*\bna^2$ the pullback connection.
\end{enumerate}
\end{lemma}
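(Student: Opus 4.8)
The plan is to prove the two implications separately, using the defining relation $q_2 \circ \Phi = \tilde\Phi \circ q_1$ together with the explicit description of the components in eq. \eqref{eq:qcomponents}. First I would establish the forward direction, assuming $(\Phi,\phi)$ is $(q_1,q_2)$-basic with associated quotient morphism $\tilde\Phi = (q_1,q_2)_*\Phi$. Restricting the relation to zero sections immediately yields $q_{M_2}\circ\phi = \tilde\phi\circ q_{M_1}$, which forces $\phi$ to map $q_{M_1}$-fibers into $q_{M_2}$-fibers and hence gives condition (1). For condition (2), I would observe that $K_1 = \Ker(q_1|_{E_1})$; applying $q_2\circ\Phi = \tilde\Phi\circ q_1$ to a vector in $K_1$ gives $q_2(\Phi(v)) = \tilde\Phi(0) = 0$, so $\Phi(v)\in \Ker(q_2|_{E_2}) = K_2$, whence $\Phi(K_1)\subseteq K_2$ and $\bar\Phi$ is well-defined on the fiberwise quotients. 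Condition (3) would then follow by tracing the definition $\Delta_{x,y} = (q|_{E_x}\bmod K_x)^{-1}\circ(q|_{E_y}\bmod K_y)$ through the intertwining relation: since $q_2$ (mod $K_2$) and $\bar\Phi$ are compatible, the two expressions $\bar\Phi\circ\Delta^1_{x,y}$ and $\Delta^2_{\phi(x),\phi(y)}\circ\bar\Phi$ both compute the same induced map between quotient fibers via $\tilde\Phi$.

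For the converse, assuming (1), (2) and (3), the task is to construct $\tilde\Phi:(q_1)_*E_1 \to (q_2)_*E_2$ and verify $q_2\circ\Phi = \tilde\Phi\circ q_1$. Here I would lean on the identification from Example \ref{ex:pullbackquot}, under which $(q_j)_*E_j$ is recovered as $(E_j/K_j)/\sim$, where the equivalence relation is generated by the $\Delta^j$. The natural candidate is to send the class of $e_x \bmod K_x$ to the class of $\bar\Phi(e_x)\bmod K_{\phi(x)}$. Condition (3) is precisely what guarantees this is well-defined on equivalence classes (it intertwines the relations defining the two quotients), condition (1) ensures the base map descends to $\tilde\phi$, and condition (2) makes $\bar\Phi$ available in the first place. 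The defining relation $q_2\circ\Phi = \tilde\Phi\circ q_1$ is then verified pointwise by unwinding the identifications.

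The final clause, that condition (3) is equivalent to its infinitesimal version $\bar\Phi\circ\bna^1 - (\phi^*\bna^2)\circ\bar\Phi = 0$, rests on the connectedness of the $q_M$-fibers, exactly as in Remark \ref{rmk:integrablebna}. The plan is to differentiate (3) along a path $y(\epsilon)$ in a $q_{M_1}$-fiber to obtain the infinitesimal identity, using that $\bna^k$ is the differentiation of $\Delta^k$; the image path $\phi(y(\epsilon))$ lies in a $q_{M_2}$-fiber by (1), which is what makes the pullback connection $\phi^*\bna^2$ the relevant object. Conversely, since $\bar\Phi\circ\Delta^1$ and $\Delta^2\circ\bar\Phi$ agree at the diagonal (both reduce to $\bar\Phi$ when $x=y$, by $\Delta_{x,x}=\mathrm{id}$) and their $\epsilon$-derivatives coincide along fibers by the infinitesimal identity, a connectedness/parallel-transport argument shows they agree everywhere on a fiber.

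I expect the main obstacle to be the well-definedness argument in the converse: one must check carefully that the assignment on quotient fibers respects the full equivalence relation generated by the $\Delta^j$ (not merely compatibility between two fixed fibers), which requires using the cocycle identities $\Delta_{x,x}=\mathrm{id}$ and $\Delta_{x,y}\circ\Delta_{y,z}=\Delta_{x,z}$ from the definition of quotient data to ensure the construction is consistent across the whole fiber. The bookkeeping of passing between the total-space description of the quotient and the component description via Example \ref{ex:pullbackquot} is the point where errors are easiest to make.
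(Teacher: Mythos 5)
Your proposal is correct and is exactly the routine argument the paper leaves implicit (this lemma is stated in the paper without proof): the forward direction by unwinding the components \eqref{eq:qcomponents} from $q_2\circ\Phi=\tilde\Phi\circ q_1$, the converse by setting $\tilde\Phi(q_1(e)):=q_2(\Phi(e))$, which is well defined by (1)--(3) and smooth because $q_1$ is a surjective submersion, and the infinitesimal reformulation by differentiating (3) along fiber paths together with a parallel-transport/connectedness argument using $\bna^1$-flat extensions $\bar e(y)=\Delta^1_{y,x}\bar e_x$. One remark: the relation defined by the $\Delta^j$ is already an equivalence relation by the axioms $\Delta_{x,x}=\mathrm{id}$ and $\Delta_{x,y}\circ\Delta_{y,z}=\Delta_{x,z}$ in the definition of quotient data, so the ``generation'' issue you flag as the main obstacle does not actually arise, and the detour through Example \ref{ex:pullbackquot} can be replaced by the direct pointwise definition above.
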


\begin{remark}[About the rank of $\tilde \Phi$]\label{rmk:rankstphi}
	Using the formula $rk(g\circ f)= rk(f)-dim(Ker(g)\cap Im(f))$ for composition of linear maps, we obtain that the rank of $\tilde \Phi$ at a point $q_1(x)$ is given by
	$$ rk_{q_1(x)}(\tilde \Phi)=rk_x(\Phi)-dim(K_2|_{\phi(x)}).$$
	In particular, if $\Phi$ has constant rank, then $\tilde \Phi$ has constant rank.
\end{remark}
We end this subsection by introducing a notion that will be used in the sequel. A section $e\in \Gamma(E)$ is called \textbf{$q$-invariant} for a simple quotient $q$ with components $(q_M,K,\Delta)$ if $$\Delta_{y,x}(e_x \ mod \ K_x)= e_y \ mod \ K_y\text{ whenever }q_M(x)=q_M(y).$$
Notice that a section $e\in \Gamma(E)$ is $q$-invariant if and only if there exists a section $\tilde e\in \Gamma(\tE)$ of the induced quotient bundle $\tE=q_*E$ such that
$$ e_x \ mod \ K_x \equiv \tilde e_{q_M(x)} , \text{ under} \ (E_x/K_x)\simeq \tE_{q_M(x)} .$$
An invariant section $e$ determines a quotient section $\tilde e$ uniquely, and we write $$\tilde{e}=q_*e.$$
For each $\tilde{e}$ there exists at least one invariant section $e$ descending to $\tilde e$.

\begin{remark}\label{rmk:localinvarbasis} 
	For every point in $M$, there exists a neighborhood supporting a local basis of sections of $E$ formed by $q$-invariant elements. This follows directly from the local description in Remark \ref{rmk:kerTqcoord} and by observing that sections of $K$ are always $q$-invariant, indeed projectable to the zero section of $\tE$. 
\end{remark}


\subsubsection*{Simple quotients of Lie algebroid structures and morphisms}

Recall that a {\bf Lie algebroid structure} on a vector bundle $E\to M$ consists of an anchor map $\rho:E \to TM$ (over $id_M$) and a Lie bracket  $[\cdot,\cdot]:\Gamma(E)\times \Gamma(E)\to \Gamma(E)$ satisfying the Leibniz identity

$$[e_1,fe_2]=f[e_1,e_2]+(L_{\rho(e_1)}f)e_2,$$
for every $e_1,e_2\in\Gamma(E)$ and $f\in C^{\infty}(M)$.

A Lie algebroid structure will be denoted by $A$ and we should think of this as a triple $A=(E,\rho,[\cdot,\cdot])$ as above. Hence $A$ is regarded as an additional geometric structure on the vector bundle $E\to M$. 

In this paper we are concerned with vector bundles $E$ equipped with suitably compatible geometric structures which are basic with respect to simple quotients of $E$. From this perspective, we proceed now to introduce the notion of a basic Lie algebroid structure on a vector bundle.

Consider a Lie algebroid structure  $A$  on the vector bundle $E\to M$ and a simple quotient $q$ of $E\to M$.
\begin{definition}
We say that {\bf $A$ is $q$-basic} if there exists a Lie algebroid structure $\tilde A$ on $\tilde E=q_*E$ such that the quotient map $q$ defines a Lie algebroid morphism. In this case, we also say that $q:A\to \tA$ {\bf defines a simple quotient of $A$}.
\end{definition}
In such a case, $\tilde A$ is unique and we shall refer to it as to
$$ \tilde A = q_* A.$$
The following result characterizes when $A$ is basic in terms of the components of $q$.
\begin{proposition}(\cite{higgins-mackenzie})\label{prop:Abasic}
	Let $q$ be a simple quotient of $E\to M$ with components $(q_M:M\to \tM,K,\Delta)$ and $A$ be a Lie algebroid structure on $E\to M$. Then, $A$ is $q$-basic  iff
		\begin{enumerate}
		\item the anchor $\rho:E \to TM$ is $(q,q_M^\nu)$-basic, where $q_M^\nu:TM\to T\tM$ was defined in Example \ref{ex:normalquot};
		\item if $e \in \Gamma(E)$ is $q$-invariant and $k\in \Gamma(K)$, then $[k,e]\in \Gamma(K)$;
		\item if $e_1,e_2 \in \Gamma(E)$ are $q$-invariant, then $[e_1,e_2]$ is also $q$-invariant.
	\end{enumerate}
	In this case, the quotient algebroid structure $q_*A$ is characterized by
 	\begin{enumerate}
 		\item the anchor map is $\tilde \rho:\tE \to T\tilde{M}$ defined by
 		\begin{equation}\label{eq:reducedanchor}
 		\tilde \rho(e_x \ mod \ K_x)= \rho(e_x) \ mod \ Ker(T_xq_M).
 		\end{equation}
 		
 		\item the Lie bracket $[,]_{\tA}$ is defined as follows: for each pair of sections $\tilde{e}_j\in \Gamma(\tE), \ j=1,2$, 
  		 \begin{equation}\label{eq:reducedbracket}
 		 [\tilde{e}_1,\tilde{e}_2]_{\tA} = q_*[e_1,e_2]
 		 \end{equation} 
 		 for any pair of $q$-invariant sections $e_1,e_2 \in \Gamma(E)$ such that $q_*e_j=\tilde e_j, \ j=1,2$.
 	\end{enumerate}
\end{proposition}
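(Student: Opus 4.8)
The plan is to split the requirement that $q:A\to\tilde A$ be a Lie algebroid morphism into two independent pieces — compatibility of the anchors and descent of the bracket — and to translate each into the component language $(q_M,K,\Delta)$. First I would record the dictionary already implicit in the text: a section $e\in\Gamma(E)$ is $q$-invariant precisely when it is $q$-\emph{projectable}, i.e.\ $e_x \bmod K_x$ corresponds to a fixed $\tilde e_{q_M(x)}$ under $E_x/K_x\simeq\tilde E_{q_M(x)}$, with $\tilde e = q_*e\in\Gamma(\tilde E)$. Since $K=\Ker(q)$ and sections of $K$ are automatically $q$-invariant projecting to $0$, two $q$-invariant lifts of the same $\tilde e$ differ exactly by a section of $K$; and by Remark \ref{rmk:localinvarbasis} the $q$-invariant sections furnish local frames, hence generate $\Gamma(\tilde E)$ over $C^\infty(\tilde M)$. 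These three facts are the backbone of everything below.

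For the anchor, I would note that the target $TM$ carries the simple quotient $q_M^\nu:TM\to T\tilde M$ of Example \ref{ex:normalquot}, whose quotient map is $Tq_M$ under $TM/\Ker(Tq_M)\simeq q_M^*(T\tilde M)$. By Definition \ref{def:basicvbmorph} and Lemma \ref{lma:basicmorphvb}, the anchor $\rho$ is $(q,q_M^\nu)$-basic exactly when there is a bundle map $\tilde\rho:\tilde E\to T\tilde M$ with $Tq_M\circ\rho=\tilde\rho\circ q$, which is precisely the anchor-compatibility demanded of the morphism $q$, and the induced map is forced to be \eqref{eq:reducedanchor}. This settles condition (1) and the anchor formula in both directions.

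For the bracket, assuming (1)–(3), I would define $[\tilde e_1,\tilde e_2]_{\tilde A}:=q_*[e_1,e_2]$ on projected invariant sections via \eqref{eq:reducedbracket}: condition (3) makes $[e_1,e_2]$ $q$-invariant, so the right-hand side is defined, and condition (2) gives independence of the chosen lifts, since replacing $e_1$ by $e_1+k$, $k\in\Gamma(K)$, changes $[e_1,e_2]$ by $[k,e_2]\in\Gamma(K)$, which projects to $0$. I would then extend by $C^\infty(\tilde M)$-bilinearity and the Leibniz rule with anchor $\tilde\rho$ on a local invariant frame. Skew-symmetry and Leibniz hold by construction; the anchor-bracket compatibility $\tilde\rho[\tilde e_1,\tilde e_2]_{\tilde A}=[\tilde\rho\tilde e_1,\tilde\rho\tilde e_2]$ is checked on invariant generators using that $q_M$-projectable vector fields have $q_M$-projectable brackets, and this makes the Jacobiator $C^\infty(\tilde M)$-linear in each slot, so Jacobi reduces to the invariant frame where it descends from $A$. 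For the converse, if $A$ is $q$-basic the morphism property applied to $q$-invariant (=projectable) sections gives directly that $[e_1,e_2]$ is $q$-related to $[\tilde e_1,\tilde e_2]_{\tilde A}$, yielding (3) and \eqref{eq:reducedbracket}, while comparing the lifts $e_1$ and $e_1+k$ forces $[k,e_2]\in\Gamma(K)$, yielding (2).

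The step I expect to be the main obstacle is not the formal anchor computation but verifying that the bracket, naturally specified only on the generating set of projected invariant sections, genuinely extends to a well-defined Lie algebroid bracket — in particular the frame-independence of the Leibniz extension and the full Jacobi identity. For this heavy lifting I would lean on the Higgins–Mackenzie theory of Lie algebroid fibrations \cite{higgins-mackenzie}, which already constructs the quotient algebroid along a surjective submersive, fiberwise-surjective morphism; the genuine content of the Proposition is then precisely the translation of their structural conditions into the components $(q_M,K,\Delta)$ carried out above.
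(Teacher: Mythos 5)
Your proposal is correct, but it follows a different route from the paper: the paper offers no proof at all, attributing the statement to \cite{higgins-mackenzie} and remarking immediately afterwards that the pair $(K,\Delta)$ is exactly their \emph{ideal system}, so that the Proposition is ``just a re-phrasing of their result'' in the components language. Your argument, by contrast, is a direct verification: the anchor condition is correctly matched with Definition \ref{def:basicvbmorph} (note $q_M^\nu$, viewed as a map $TM\to T\tM$, is $Tq_M$), and the bracket descent via well-definedness (condition 2 for independence of invariant lifts, condition 3 for projectability) together with the converse extraction of (2)--(3) from $q$-relatedness of brackets is exactly right. What your route buys is a self-contained proof where the paper has only a citation; what the paper's route buys is brevity and an explicit bridge to the established Higgins--Mackenzie framework. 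One genuine simplification is available to you: the step you single out as the ``main obstacle'' --- frame-independence of a Leibniz extension and the Jacobi identity, for which you fall back on \cite{higgins-mackenzie} --- is actually unnecessary as posed. Since the paper records that \emph{every} $\tilde e\in\Gamma(\tE)$ admits a global $q$-invariant lift (not merely local frames as in Remark \ref{rmk:localinvarbasis}), formula \eqref{eq:reducedbracket} defines $[\cdot,\cdot]_{\tA}$ on all of $\Gamma(\tE)$ at once, with no extension from a frame; the Leibniz rule then descends because $q_M^*\tilde f\cdot e_2$ is again invariant (with projection $\tilde f\,\tilde e_2$, using that $q_M^*\tilde f$ is constant on $q_M$-fibers) and $L_{\rho(e_1)}q_M^*\tilde f = q_M^*(L_{\tilde\rho(\tilde e_1)}\tilde f)$ by condition (1); and Jacobi descends trivially by applying $q_*$ to the vanishing Jacobiator of invariant lifts, since the bracket of invariant sections is again invariant. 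With that observation your sketch closes into a complete elementary proof, with no residual appeal to the fibration machinery beyond what you already verified by hand.
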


The pair $(K,\Delta)$ in the quotient data making $A$ basic is called an \textbf{ideal system} in \cite{higgins-mackenzie}; the previous Proposition is just a re-phrasing of their result in terms of our terminology.

\begin{remark} \label{rmks:pullbackq} We give some remarks below:
	\begin{itemize}
	\item Let $A$ be $q$-basic and denote $(q_M,K,\Delta)$ the components of $q$. Then, $q$ defines a \textbf{fibration} of Lie algebroids as in \cite[Def. 4.4.1]{mackenziebook} for which the $q_M$-fibers are connected. Also, $Ker(Tq)\subset TA$ defines a morphic subbundle in the sense of \cite{JO}. In particular, $K\subseteq A$ defines a Lie subalgebroid over $M$. Notice, however, that the vector bundle $E/K\to M$ does not inherit a Lie algebroid structure in general. 

	\item Recall (e.g. from \cite{higgins-mackenzie}) that given a surjective submersion $q_M:M \to \tM$ and a Lie algebroid structure $\tA$ on $\tE\to \tM$, there is a \textbf{pullback Lie algebroid} structure $q_M^!\tA$ with underlying vector bundle
	$$ (q_M,\tilde \rho)^!\tE :=\{(v,\tilde e)\in TM\times \tE: Tq_M(v)=\tilde \rho(\tilde e) \} \subset TM \times \tE$$
	and the Lie algebroid structure $q_M^!\tA$ is induced from the product $TM \times \tA$. Let $q:A \to \tA$ be a simple quotient of a Lie algebroid $A$. Then, the map
	$$ \psi: A \to q_M^!\tA, e \mapsto (\rho(e),q(e))$$
	defines a Lie algebroid morphism. This map $\psi$ is an isomorphism iff $\rho|_K: K \to Ker(Tq_M)$ is an isomorphism of vector bundles over $id_M$ (c.f. \cite[Lemma 3.5]{Alv}). In this case, we will say that $q$ is of \textbf{pullback type}. Notice that, given a simple quotient $q_M:M \to \tM$ of $M$ and $\tA$ a Lie algebroid over $\tM$, the natural map
	$$ q: q_M^!\tA \to \tA$$
	defines a simple quotient of pullback type, and $\rho|_K^{-1}(v)=(v,0)$ defines a natural injective morphism $Ker(Tq_M)\to q_M^!\tA$.

\end{itemize}
\end{remark}

Finally, we consider quotients of Lie algebroid morphisms which will play a key role in our description of morphic forms on Lie algebroids. The following result can be deduced from \cite[Thm. 4.4.3]{mackenziebook}.

\begin{proposition}\label{prop:quotAmorph}
	Let $(q_j,q_{M_j})$ be a simple quotient of $E_j \to M_j$ and $A_j$ be a $q_j$-basic Lie algebroid structure on $E_j\to M_j$, for $j=1,2$. Let $(\Phi,\phi):E_1 \to E_2$ be a vector bundle morphism. If $\Phi$ defines a Lie algebroid morphism $A_1 \to A_2$ and $\Phi$ is $(q_1,q_2)$-basic, then $(q_1,q_2)_*\Phi$ defines a Lie algebroid morphism $(q_1)_*A_1 \to (q_2)_*A_2$ between the quotient algebroids.
\end{proposition}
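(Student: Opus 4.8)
The plan is to prove that $\tilde\Phi=(q_1,q_2)_*\Phi$ is a Lie algebroid morphism by working with the Lie algebroid (Chevalley--Eilenberg) differentials rather than directly with anchors and brackets. Recall the standard characterization: a vector bundle morphism $\Psi\colon B_1\to B_2$ covering $\psi\colon N_1\to N_2$ is a Lie algebroid morphism if and only if the pullback on exterior forms of the duals, $\Psi^*\colon \Gamma(\wedge^\bullet B_2^*)\to \Gamma(\wedge^\bullet B_1^*)$, intertwines the Lie algebroid differentials, i.e. $\Psi^* d_{B_2}=d_{B_1}\Psi^*$ (the degree-$0$ part of this identity encodes anchor-compatibility and the degree-$1$ part the bracket-compatibility). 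The key virtue of this formulation, and the reason I would adopt it here, is that it is manifestly free of base points; this circumvents the genuine difficulty of the statement, namely that $\tilde\Phi$ covers $\tilde\phi\colon \tilde M_1\to\tilde M_2$, a map which is in general neither injective nor a diffeomorphism, so that a naive ``bracket of sections'' argument would force one into the delicate bookkeeping of $\Phi$-related sections.

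The set-up is then a short diagram chase. By hypothesis $\Phi$ is $(q_1,q_2)$-basic, so $\tilde\Phi$ exists as a vector bundle morphism with $q_2\circ\Phi=\tilde\Phi\circ q_1$ (Definition \ref{def:basicvbmorph}). Dualizing and using the contravariant functoriality $(g\circ f)^*=f^*\circ g^*$ of the pullback on forms, this square yields $\Phi^*\circ q_2^*=q_1^*\circ\tilde\Phi^*$. Moreover, each of the three maps $q_1,q_2,\Phi$ is already a Lie algebroid morphism: the $q_j$ because $A_j$ is $q_j$-basic (so $q_j\colon A_j\to\tA_j$ is a morphism by definition of the quotient structure), and $\Phi$ by hypothesis. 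Hence $q_1^*,q_2^*$ and $\Phi^*$ all commute with the corresponding differentials, and I can compute
\begin{align*}
q_1^*\,\tilde\Phi^*\, d_{\tA_2}
&= \Phi^*\, q_2^*\, d_{\tA_2}
= \Phi^*\, d_{A_2}\, q_2^* \\
&= d_{A_1}\,\Phi^*\, q_2^*
= d_{A_1}\, q_1^*\,\tilde\Phi^*
= q_1^*\, d_{\tA_1}\,\tilde\Phi^*.
\end{align*}
Therefore $q_1^*\circ\bigl(\tilde\Phi^* d_{\tA_2}-d_{\tA_1}\tilde\Phi^*\bigr)=0$ as a map $\Gamma(\wedge^\bullet \tA_2^*)\to \Gamma(\wedge^\bullet A_1^*)$.

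The one remaining point, which I regard as the actual crux of the argument, is the injectivity of $q_1^*\colon \Gamma(\wedge^\bullet \tA_1^*)\to\Gamma(\wedge^\bullet A_1^*)$; granting it, the displayed identity forces $\tilde\Phi^* d_{\tA_2}=d_{\tA_1}\tilde\Phi^*$, and $\tilde\Phi$ is a Lie algebroid morphism as desired. This injectivity follows directly from the defining properties of a simple quotient of a vector bundle: $q_{M_1}$ is surjective and $q_1$ is fiberwise surjective, so for any point $\tilde x$, any preimage $x$ with $q_{M_1}(x)=\tilde x$, and any elements of $(\tA_1)_{\tilde x}$, one may lift them through $q_1|_{(A_1)_x}$; evaluating $q_1^*\tilde\xi$ on such lifts recovers $\tilde\xi$ pointwise, whence $q_1^*\tilde\xi=0$ implies $\tilde\xi=0$. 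I would present this as a small preliminary lemma. As a consistency check one could instead verify anchor-compatibility of $\tilde\Phi$ by hand from the explicit formula \eqref{eq:reducedanchor} together with $\rho_2\circ\Phi=T\phi\circ\rho_1$, and the bracket-compatibility from \eqref{eq:reducedbracket}, but the differential-based route above is cleaner precisely because it never requires choosing how sections transform under the non-diffeomorphism $\tilde\phi$.
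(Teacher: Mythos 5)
Your argument is correct, but it is genuinely different from what the paper does: the paper gives no proof of this proposition at all, simply deducing it from \cite[Thm.\ 4.4.3]{mackenziebook}, i.e.\ from Mackenzie's general theory of quotients of Lie algebroids over fibrations (recall from Remark \ref{rmks:pullbackq} that a simple quotient of a Lie algebroid is exactly such a fibration with connected fibers). Mackenzie's route works at the level of anchors, brackets and invariant sections, which is precisely the section-by-section bookkeeping over the non-injective base map $\tilde\phi$ that you set out to avoid; your route replaces it with the dual (Chevalley--Eilenberg, or $Q$-manifold) characterization of Lie algebroid morphisms, after which everything reduces to contravariant functoriality of pullback applied to the square $q_2\circ\Phi=\tilde\Phi\circ q_1$, plus the injectivity of $q_1^*$. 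You correctly identify that injectivity as the crux, and your proof of it is sound: fiberwise surjectivity of $q_1$ together with surjectivity of $q_{M_1}$ lets you recover $\tilde\xi$ pointwise from $q_1^*\tilde\xi$ (in degree $0$ this is just injectivity of $q_{M_1}^*$ on functions). Two small caveats: the equivalence ``$\Psi$ is a Lie algebroid morphism iff $\Psi^*d_{B_2}=d_{B_1}\Psi^*$'' is standard but is nowhere stated in this paper, so in context you should cite it (Vaintrob's formulation) or include the short degree-$0$/degree-$1$ verification; and your diagram chase implicitly uses that all four pullbacks are morphisms of graded algebras, which holds for any vector bundle morphism, so there is no gap there. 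What each approach buys: the paper's citation keeps the exposition short and situates the statement within the established theory of ideal systems, while your argument is self-contained, avoids any choice of how sections transport under $\tilde\phi$, and would apply verbatim whenever the quotient maps are fiberwise surjective over surjective base maps, without invoking the full fibration machinery.
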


\subsection{Characterizing basic morphic forms on Lie algebroids}

Let $E\to M$ be a vector bundle. A $k$-form $\omega\in \Omega^k(E)$ is called {\bf linear} if

$$ h_\lambda^*\om = \lambda \om, \ \forall \lambda \in \R,$$
where $h_\lambda$ denotes fiberwise scalar multiplication by $\lambda$ on the fibers of $E$. Notice that the exterior differential $d\om$ of a linear form $\om$ is again a linear form.

\begin{example}
	Consider the vector bundle 
	$$p:E = \Lambda^k T^*M \to M$$
	whose sections are $k$-forms on $M$. This bundle comes with a tautological $k$-form $\theta_k\in \Omega^k(E)$ defined by
	$$ \theta_k|_{\a}(V_1,\dots,V_k) = \a(Tp(V_1),\dots,Tp(V_k)), \ \a\in \Lambda^k T^*M, \ V_j \in T_\a (\Lambda^k T^*M), \ j=1,...,k.$$
	It is easy to see that $\theta_k$ is a linear $k$-form. As a consequence of the previous observation, the canonical symplectic form $\om_{can}=d\theta_{1} \in \Omega^2(T^*M)$ is linear.
\end{example}

One easily observes that linear forms can be pulled back along vector bundle morphisms. In particular, a vector bundle morphism $\mu:E \to \Lambda^k T^*M$ over the identity on $M$ induces a linear $k$-form on $E$, defined by
$$\Lambda_\mu : = \mu^*\theta_k \in \Omega^k(E).$$
We now recall the following characterization of linear forms on vector bundles. 
\begin{proposition}(\cite{bursztyncabrera})
	The assignment 
	$$ (\mu:E\to \Lambda^{k-1} T^*M, \eta: E\to \Lambda^{k} T^*M) \mapsto d\Lambda_\mu + \Lambda_\eta$$
	defines a 1-1 correspondence into the set of linear $k$-forms on $E$.
\end{proposition}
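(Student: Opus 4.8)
The plan is to organize everything around the Euler (Liouville) vector field $\mathcal{E}$ on $E$, i.e. the generator of the flow $h_{e^t}$. Since $h_\lambda^*$ multiplies fibre-coordinate $1$-forms by $\lambda$ and fixes base $1$-forms, a form $\omega\in\Omega^k(E)$ is linear exactly when $L_{\mathcal E}\omega=\omega$ (weight one). First I would record that both building blocks are horizontal and linear. Writing $p:E\to M$ for the projection, a direct evaluation of $\Lambda_\nu=\nu^*\theta_j$ for a bundle map $\nu:E\to\Lambda^j T^*M$ over $\mathrm{id}_M$ gives $\Lambda_\nu|_e(V_1,\dots,V_j)=\nu(e)(Tp(V_1),\dots,Tp(V_j))$, so $i_V\Lambda_\nu=0$ for every $p$-vertical $V$, and $h_\lambda^*\Lambda_\nu=\lambda\Lambda_\nu$ because $\nu$ is fibrewise linear and $Tp\circ Th_\lambda=Tp$. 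Conversely I would prove the auxiliary fact that $\nu\mapsto\Lambda_\nu$ is a bijection from bundle maps $E\to\Lambda^j T^*M$ onto horizontal linear $j$-forms: a horizontal $\beta$ is determined by $\hat\beta(e)\in\Lambda^j T_x^*M$, $\hat\beta(e)(w_1,\dots,w_j):=\beta_e(\tilde w_1,\dots,\tilde w_j)$ for arbitrary lifts $\tilde w_i$ of $w_i$ (well defined by horizontality), and weight one forces $\hat\beta(e)$ to be homogeneous of degree one, hence fibrewise linear, so $\beta=\Lambda_{\hat\beta}$. In particular the proposed assignment lands in the linear forms, since $\Lambda_\mu,\Lambda_\eta$ are linear and $d$ preserves weight.

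Next I would produce explicit inverse formulas with $i_{\mathcal E}$, which simultaneously yields injectivity. Since $\mathcal E$ is $p$-vertical, $i_{\mathcal E}\Lambda_\eta=0$; and Cartan's formula applied to the horizontal linear form $\Lambda_\mu$ gives $\Lambda_\mu=L_{\mathcal E}\Lambda_\mu=d\,i_{\mathcal E}\Lambda_\mu+i_{\mathcal E}\,d\Lambda_\mu=i_{\mathcal E}\,d\Lambda_\mu$. Hence for $\omega=d\Lambda_\mu+\Lambda_\eta$ one reads off $i_{\mathcal E}\omega=\Lambda_\mu$ and then $\Lambda_\eta=\omega-d\Lambda_\mu$. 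Combined with the injectivity of $\nu\mapsto\Lambda_\nu$ from the first step, this recovers both $\mu$ and $\eta$ from $\omega$, so the map is injective.

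For surjectivity I would start from an arbitrary linear $\omega$ and run the same recipe: set $\Lambda_\mu:=i_{\mathcal E}\omega$ (defining $\mu$) and $\Lambda_\eta:=\omega-d\Lambda_\mu$ (defining $\eta$), so that $\omega=d\Lambda_\mu+\Lambda_\eta$ holds tautologically once the two pieces are shown to be horizontal linear. Linearity is automatic: $i_{\mathcal E}$ preserves weight because $(h_\lambda)_*\mathcal E=\mathcal E$, and $d\Lambda_\mu$ is linear. The crux, and the step I expect to be the main obstacle, is the horizontality of $i_{\mathcal E}\omega$ and of $\omega-d\Lambda_\mu$, since $i_{\mathcal E}\beta=0$ alone does not make $\beta$ horizontal. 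I would settle this via the bigrading of $\Omega^\bullet(E)$ by fibre-coordinate weight and by the number of $dt$-legs: writing $\omega=f_I\,dx^I+g_{I,a}\,dx^I\wedge dt^a$, weight one forces $f_I$ linear in $t$ and $g_{I,a}$ independent of $t$, whence $i_{\mathcal E}\omega=(-1)^{k-1}g_{I,a}t^a\,dx^I$ is horizontal, while the single-$dt$-leg part of $d\Lambda_\mu$ cancels exactly the $g_{I,a}\,dx^I\wedge dt^a$ terms, leaving $\omega-d\Lambda_\mu$ horizontal. This local normal form of a linear $k$-form (at most one vertical leg, with coefficients of the prescribed homogeneity) is the structural heart of the argument; once it is in place, the bijection with bundle maps from the first step turns the two horizontal linear pieces into the unique $\mu$ and $\eta$, completing the proof.
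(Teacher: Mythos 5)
Your proof is correct and complete, but note that the paper does not actually prove this statement: it is recalled from \cite{bursztyncabrera}, and the paper only records the inverse assignment through the zero-section formulas $\mu(e)(v_1,\dots,v_{k-1})=\om|_{0_x}(\tfrac{d}{dt}|_{t=0}(te),T0(v_1),\dots)$ and the analogous formula for $\eta$ via $d\om$. Your route is a clean, self-contained globalization of exactly those formulas: the identities $i_{\mathcal{E}}\om=\Lambda_\mu$ and $\om-d\,i_{\mathcal{E}}\om=i_{\mathcal{E}}d\om=\Lambda_\eta$ (the latter by Cartan, since $L_{\mathcal{E}}\om=\om$) contract with the Euler field at every point of $E$, whereas the paper's formulas evaluate against vertical lifts along the zero section only; your version buys the inverse map and injectivity in one stroke, and it makes transparent why $\eta$ is read off from $d\om$. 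Two small points deserve a line each to be fully rigorous. First, in your local normal form you should state explicitly why components of $\om$ carrying two or more fiber differentials vanish: writing $\om=\sum\om_{I,J}(x,t)\,dx^I\wedge dt^J$, the linearity condition gives $\om_{I,J}(x,\lambda t)\lambda^{|J|}=\lambda\,\om_{I,J}(x,t)$, and setting $\lambda=0$ (or letting $\lambda\to 0^+$) kills all terms with $|J|\geq 2$ — the same scaling identity you already use for $|J|=0,1$. Second, when you pass from homogeneity of degree one of $\hat\beta(e)$ in $e$ to fibrewise linearity, and likewise when you replace the paper's condition $h_\lambda^*\om=\lambda\om$ for all $\lambda\in\R$ by $L_{\mathcal{E}}\om=\om$ (which a priori only encodes $\lambda>0$), you are invoking smoothness via Euler's relation; this is standard but worth flagging, and your coordinate analysis in fact covers it. With those one-liners inserted, the argument — horizontal linear $j$-forms biject with bundle maps $E\to\Lambda^jT^*M$, plus the $i_{\mathcal{E}}$-inversion — is a complete proof of the proposition.
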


As a consequence, a linear $k$-form $\omega\in\Omega^k(E)$ can be written as $\omega= d\Lambda_\mu + \Lambda_\eta$ for unique vector bundle morphisms $\mu:E\to \Lambda^{k-1} T^*M, \eta: E\to \Lambda^{k} T^*M$. We refer to $(\mu,\eta)$ as the \textbf{components} of $\omega$. The components can be recovered from $\om$ via the following formulas: for $e\in E_x$, $v_1,\dots,v_k\in T_xM$,
\begin{eqnarray}
\mu(e)(v_1,\dots,v_{k-1})&=& \om|_{0_x}(\frac{d}{dt}|_{t=0}(te), T0(v_1),\dots,T0(v_{k-1})) \\
\eta(e)(v_1,\dots,v_{k})&=& (d\om)|_{0_x}(\frac{d}{dt}|_{t=0}(te), T0(v_1),\dots,T0(v_{k}))
\end{eqnarray}
where $0:M \to E$ denotes the zero section.
\begin{remark}\label{rmk:omlinpulle}
	It is straightforward to verify that, for a linear $k$-form $\om$ with components $(\mu,\eta)$ and any section $e\in \Gamma_M(E)$, then the following holds:
	$$ e^*\om = d \mu(e) + \eta(e) \in \Omega^k(M).$$
\end{remark}
We now characterize when a linear form is basic for a simple quotient in terms of the underlying components.
\begin{proposition}\label{prop:linomqbasic}
	Let $q$ be a simple quotient of $E\to M$ with components $(q_M,K,\Delta)$ and let $\om$ be a linear $k$-form on $E\to M$ with components $(\mu,\eta)$. Then, $\om$ is $q$-basic iff
	\begin{enumerate}
		\item $K\subset Ker(\mu)$ and $K\subset Ker(\eta)$,
		\item for each $q$-invariant section $e\in \Gamma(E)$, the forms
		$$ \mu(e)\in \Omega^{k-1}(M), \ \eta(e)\in \Omega^{k}(M) \text{ are $q_M$-basic.}$$
	\end{enumerate}
	In this case, $q_*\om$ is a linear $k$-form on the quotient bundle $q_*E$ and the components $(\tilde \mu,\tilde \eta)$ of $q_*\om$ are uniquely determined by 
	\begin{eqnarray}
	 \tilde \mu: q_*E \to \Lambda^{k-1}T^*\tM&,& \ [q_M^*\tilde \mu(q(e))]|_x =  \mu(e) \nonumber \\
	 \tilde \eta: q_*E \to \Lambda^{k-1}T^*\tM&,& \ [q_M^*\tilde \eta(q(e))]|_x = \eta(e) \label{eq:tildemuetadef}
	 \end{eqnarray}
	 for every $e\in E|_x, \ x\in M$.
\end{proposition}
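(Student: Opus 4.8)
The plan is to reduce everything to the infinitesimal criterion for basic forms recalled in Section \ref{subsec:general}: since the $q$-fibers are connected, $\om$ is $q$-basic iff $i_V\om = 0$ and $i_V d\om = 0$ for every vertical field $V\in\Gamma(\Ker(Tq))$ (horizontality of both $\om$ and $d\om$ is equivalent to horizontality plus invariance via Cartan's formula). Because $\om = d\Lambda_\mu + \Lambda_\eta$ we have $d\om = d\Lambda_\eta$, so $d\om$ is again linear with components $(\eta,0)$; thus it suffices to understand, for a linear form $\alpha$ with components $(\mu_\alpha,\eta_\alpha)$, when $i_V\alpha = 0$ for all vertical $V$, and then to apply the conclusion to $\alpha=\om$ and $\alpha=d\om$. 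The main computational tool is the contraction identity $i_{a^\uparrow}\alpha = p^*\mu_\alpha(a)$ for the vertical lift $a^\uparrow$ of a section $a\in\Gamma(E)$, where $p:E\to M$; this follows from $i_{a^\uparrow}\Lambda_{\eta_\alpha}=0$ (the forms $\Lambda$ are $p$-horizontal) together with $i_{a^\uparrow}d\Lambda_{\mu_\alpha} = L_{a^\uparrow}\Lambda_{\mu_\alpha} = p^*\mu_\alpha(a)$, the last equality obtained by differentiating the flow $e\mapsto e+ta$ of $a^\uparrow$.

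Next I would describe $\Ker(Tq)$ concretely. By Remarks \ref{rmk:kerTqcoord} and \ref{rmk:localinvarbasis}, at a point $e_x\in E_x$ the subspace $\Ker(Tq)|_{e_x}$ is spanned by the vertical lifts $k^\uparrow$ with $k\in K_x$ and by the vectors $Te(\dot x)$, where $e$ is any $q$-invariant section through $e_x$ and $\dot x\in\Ker(T_xq_M)$ (that $Te(\dot x)$ is $q$-vertical follows from $q\circ e = \tilde e\circ q_M$ for invariant $e$). Contracting with the first family gives, via the identity above, $i_{k^\uparrow}\om = p^*\mu(k)$ and $i_{k^\uparrow}d\om = p^*\eta(k)$, whose vanishing is exactly condition (1), $K\subseteq\Ker(\mu)\cap\Ker(\eta)$. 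The remaining and main task is to analyze $i_{Te(\dot x)}\om$ (and the same for $d\om$).

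The crux is that $i_{Te(\dot x)}\om$ is a full $(k-1)$-form on $E$, so it must be evaluated on all tangent vectors at $e_x$, which I would split using $T_{e_x}E = \{Te(w):w\in T_xM\}\oplus\{a^\uparrow:a\in E_x\}$. Plugging in only horizontal vectors $Te(w_j)$ reproduces $i_{\dot x}(e^*\om)$; plugging in exactly one vertical lift $a^\uparrow$ gives, by the contraction identity, $\pm\,\mu(a)(\dot x,\dots)$; two or more vertical lifts give zero since $\om(a^\uparrow,b^\uparrow,\dots)=(i_{b^\uparrow}i_{a^\uparrow}\om)(\dots)=0$. Hence $i_{Te(\dot x)}\om=0$ for all invariant $e$ and all $\dot x\in\Ker(Tq_M)$ is equivalent to: (A) $e^*\om$ is $q_M$-horizontal for every invariant $e$, and (B) $\mu(a)$ is $q_M$-horizontal for every $a$; the analogue for $d\om$ gives the same with $\eta$. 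I would then reconcile these with (2). For "basic $\Rightarrow$ (2)", note that for invariant $e$ one has $e^*\om = q_M^*(\tilde e^*\tom)$, so $e^*\om$ is in fact $q_M$-\emph{basic}; combining $e^*\om = d\mu(e)+\eta(e)$ (Remark \ref{rmk:omlinpulle}) with the horizontality of $\mu(e),\eta(e)$ from (B) and the identities $i_{\dot x}e^*\om=0$, $L_{\dot x}e^*\om=0$ separates into $\mu(e)$ and $\eta(e)$ being individually $q_M$-basic. Conversely, assuming (1) and (2), condition (A) holds because $e^*\om = d\mu(e)+\eta(e)$ is then basic, and condition (B) for a \emph{general} $a$ is recovered from the invariant case: writing $a$ in a local $q$-invariant frame $\{e_k\}\cup\{e_\a\}$ with $e_k\in\Gamma(K)$, the inclusion $K\subseteq\Ker(\mu)$ gives $\mu(a)=\sum_\a a^\a\mu(e_\a)$ with each $\mu(e_\a)$ horizontal by (2). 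I expect this reconciliation between the "all sections" horizontality forced by the vertical-lift directions and the "invariant sections only" wording of (2) to be the main obstacle, resolved precisely by $K\subseteq\Ker(\mu)\cap\Ker(\eta)$.

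Finally, for the quotient form I would first check that $\tom = q_*\om$ is linear: since $q$ is fiberwise linear it intertwines the scalar multiplications, $q\circ h_\lambda = h_\lambda\circ q$, so $q^*(h_\lambda^*\tom) = \lambda\,q^*\tom$, and injectivity of $q^*$ (as $q$ is a surjective submersion) yields $h_\lambda^*\tom = \lambda\tom$. To identify its components $(\tilde\mu,\tilde\eta)$, I would use that for invariant $e$ with $\tilde e=q_*e$ the vertical lifts are $q$-related, $Tq(e^\uparrow)=\tilde e^\uparrow$, so $i_{e^\uparrow}\om = q^*(i_{\tilde e^\uparrow}\tom) = q^*(\tilde p^{\,*}\tilde\mu(\tilde e))$, where $\tilde p:\tE\to\tM$; combining with $i_{e^\uparrow}\om = p^*\mu(e)$ and $\tilde p\circ q = q_M\circ p$ gives $p^*\mu(e) = p^*q_M^*\tilde\mu(\tilde e)$, whence $\mu(e)=q_M^*\tilde\mu(\tilde e)$ by injectivity of $p^*$. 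This is exactly the defining formula for $\tilde\mu$, and the same argument applied to $d\om$ and $d\tom$ yields the formula for $\tilde\eta$; well-definedness of $\tilde\mu,\tilde\eta$ as bundle maps on $q_*E$ follows from $K\subseteq\Ker(\mu)\cap\Ker(\eta)$ (independence of the representative $e$ of $q(e)$) and from the descent guaranteed by (2).
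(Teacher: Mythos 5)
Your proposal is correct, and structurally it has the same skeleton as the paper's proof: reduce to the infinitesimal criterion $i_V\om=0$, $i_V d\om=0$ for all $V\in Ker(Tq)$, and decompose the vertical directions into fiberwise $K$-directions plus directions $Te(\dot x)$ along $q$-invariant sections with $\dot x\in Ker(Tq_M)$ --- the paper's adapted chart $(x^i,u^a)$ with frame $\{e_k\}\cup\{e_\a\}$ from Remark \ref{rmk:kerTqcoord} encodes exactly your splitting. Where you genuinely diverge is in the computational engine: the paper expands $\om = du^a\wedge\mu(e_a)+u^a(d\mu(e_a)+\eta(e_a))$ and $d\om$ in coordinates and reads the six pointwise conditions off the contractions, whereas you work invariantly via the vertical-lift identities $i_{a^\uparrow}\om=p^*\mu(a)$, $i_{a^\uparrow}d\om=p^*\eta(a)$, the pullback identity $e^*\om=d\mu(e)+\eta(e)$ of Remark \ref{rmk:omlinpulle}, and the type decomposition $T_{e_x}E=Te(T_xM)\oplus\{a^\uparrow:a\in E_x\}$. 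Your ``reconciliation'' step is exactly the content of the paper's closing sentence (every invariant section is locally $f^ke_k+(q_M^*\tilde g^\a)e_\a$, so horizontality of $\mu(a),\eta(a)$ for \emph{all} $a$ follows from condition (1) plus condition (2) by $C^\infty(M)$-linearity), and your separation of $i_{\dot x}(d\mu(e)+\eta(e))=0$ into $i_{\dot x}d\mu(e)=0$ and $i_{\dot x}\eta(e)=0$ using the $d\om$-conditions mirrors the paper's bookkeeping. Two things your route buys: it is chart-free, and it actually proves the tail assertions that the paper's proof leaves implicit --- linearity of $q_*\om$ (via $q\circ h_\lambda=h_\lambda\circ q$ and injectivity of $q^*$) and the component formulas \eqref{eq:tildemuetadef} (via $Tq(e^\uparrow)=\tilde e^{\,\uparrow}$ for invariant $e$, giving $p^*\mu(e)=p^*q_M^*\tmu(q_*e)$ and concluding by injectivity of $p^*$, noting that an invariant local section passes through every point of $E$ by Remark \ref{rmk:localinvarbasis}). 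What the paper's coordinate route buys is brevity and an explicit normal form for $Ker(Tq)$ that is reused verbatim in the later kernel-reducibility arguments (Lemma \ref{lem:keromlin}, Proposition \ref{prop:qomred}). Your only compressed steps --- that the two families you exhibit span all of $Ker(Tq)|_{e_x}$ (a dimension count against Remark \ref{rmk:kerTqcoord}) --- are immediate from the references you cite, so there is no gap.
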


\begin{proof}
	The statement that $\om$ is $q$-basic can be verified locally around each point of $E$. Around each point of $E$, we can use a coordinate system $(x^i,u^a)$ of the form given in Remark \ref{rmk:kerTqcoord}, with underlying basis of local sections $\{e_k\}\cup \{e_\a\}$ where $e_k$ span the fibers of $K\subset E$ and $e_\a$ are $q$-invariant sections. In this sytem, we have
	$$ \om = du^a \wedge \mu(e_a) + u^a (d\mu(e_a)+\eta(e_a)), \ d\om=  du^a \wedge \eta(e_a) + u^a d\eta(e_a). $$
	Following Remark \ref{rmk:kerTqcoord} further, a vector $V=(\dot x, \dot u)$ is in $Ker(Tq)$ iff $Tq_M(\dot x)=0$ and $\dot u^\alpha =0$, i.e. $\dot u\in K$.
	We then compute
	$$ i_V \om = \dot u^k \mu(e_k) - du^a \wedge i_{\dot x} \mu(e_a) + u^a i_{\dot x} (d\mu(e_a)+\eta(e_a)),$$
	$$ i_V d\om = \dot u^k \eta(e_k) - du^a \wedge i_{\dot x} \eta(e_a) + u^a i_{\dot x} d\eta(e_a).$$
	Then, the conditions $i_V\om = 0$ and $i_V d\om$ are equivalent to 
	$$ \mu(e_k)=0, \ \eta(e_k)=0, \ i_{\dot x}\mu(e_\a) = 0, \ i_{\dot x} \eta(e_\a) =0 , i_{\dot x}d\mu(e_\a) = 0, \  i_{\dot x}d\eta(e_\a) = 0. $$
	Since any invariant section is locally of the form $e = f^k e_k + (q_M^*\tilde g^\a) e_\a$ with $f^k$ functions on $M$ and $\tilde g^\a$ functions defined on $\tM$, the above system of equations is equivalent to statements \emph{1.} and \emph{2.}, finishing the proof.
\end{proof}

If $\om$ has components $(\mu,\eta)$ satisfying items \emph{1.,2.} as in Proposition \ref{prop:linomqbasic}, we say that {\bf $(\mu,\eta)$ are $q$-basic} and we denote 
$$ (\tilde \mu,\tilde \eta)=q_*(\mu,\eta)$$
the {\bf quotient components} corresponding to $\tom =q_*\om$.

We now consider a vector bundle $p:E\to M$ endowed with an algebroid structure $A$. Recall that there is an induced Lie algebroid structure $TA$ on $Tp:TE\to TM$ called the \textbf{tangent algebroid} structure. The tangent Lie algebroid structure can be extended to any direct sum of $TE\to TM$ with itself. The following definition is taken from \cite{bursztyncabrera} where more details can be found.

\begin{definition}
 A linear $k$-form on $E\to M$ is called {\bf $A$-morphic} when the map
 $$ \underline{\om}: T^{\oplus k}E:= TE \oplus_E \dots \oplus_E TE \to \R, \ V_1,\dots, V_k \mapsto \om(V_1,\dots,V_k)$$
 defines a Lie algebroid morphism with respect to the direct sum Lie algebroid structure $T^{\oplus k}A:=TA\oplus_A \dots \oplus_A TA$ on the domain and the Lie algebra structure $\R \to \star$ on the codomain.
\end{definition}

Let $\omega\in \Omega^k(E)$ be a linear $k$-form with components $(\mu,\eta)$. As shown in \cite[Thm. 1]{bursztyncabrera}, $\omega$ is $A$-morphic if, and only if, the following conditions hold:
\begin{eqnarray}
i_{\rho(e_1)}\mu(e_2)& = &- i_{\rho(e_2)}\mu(e_1) \nonumber\\
\mu([e_1,e_2])& =& L_{\rho(e_1)}\mu(e_2)-i_{\rho(e_2)}d\mu(e_1) -
i_{\rho(e_2)}\eta(e_1) \nonumber\\
\eta([e_1,e_2])&=& L_{\rho(e_1)}\eta(e_2) - i_{\rho(e_2)}d\eta(e_1),
\label{eq:IMeqs}
\end{eqnarray}
for all $e_1,e_2 \in \Gamma(E)$. A pair of vector bundle morphisms $(\mu,\eta)$ satisfying the equations \eqref{eq:IMeqs} is called an {\bf IM $k$-form on $A$}. 


\begin{remark}[Restricting morphic forms]\label{rmk:restrictionIM}
	Let $E'\subseteq E$ be a subbundle over $M$ together with a Lie subalgebroid structure $A'\subseteq A$. An $A$-morphic $k$-form $\om$ on $E$ can be restricted to $E'$ yielding an  $A'$-morphic $k$-form $\omega'$. If $(\mu,\eta)$ are the components of $\omega$, then the components $(\mu',\eta')$ of $\omega'$ are given by the restrictions $\mu|_{E'}$ and $\eta|_{E'}$, respectively.
\end{remark}

\medskip

The following result states that being morphic is preserved under simple quotients.

\begin{theorem}\label{thm:Aandomqbasic}
	Let $E \to M$ be a vector bundle and $q$ a simple quotient of $E\to M$. Let $A$ be a Lie algebroid structure on $E\to M$ and $\om$ a linear $k$-form which is $A$-morphic. If $A$ and $\om$ are $q$-basic, then $q_*\om$ is $(q_*A)$-morphic. In particular, the quotient components $(\tmu,\teta)$ underlying $q_*\om$, which are given by \eqref{eq:tildemuetadef} in terms of the components of $\om$, satisfy the IM equations \eqref{eq:IMeqs} defining an IM $k$-form on $q_*A$.
\end{theorem}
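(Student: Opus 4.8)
The plan is to prove the morphic statement first, using the characterization of $A$-morphic forms as Lie algebroid morphisms, and then read off the IM equations as a corollary. Recall that $\om$ being $A$-morphic means precisely that $\uom : T^{\oplus k}E \to \R$ is a Lie algebroid morphism from the direct-sum tangent algebroid $T^{\oplus k}A$ to the Lie algebra $\R \to \star$. The whole statement then becomes an instance of Proposition \ref{prop:quotAmorph} applied to this morphism, with the domain quotient induced by $q$ on $T^{\oplus k}E$ and the trivial (identity) quotient on $\R$, the latter being trivially basic since its base $\star$ is a point.

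First I would promote the given simple quotient $q$ to a simple quotient of the relevant tangent objects. Since $q : E \to \tE$ is a surjective submersion with connected fibers and $Tq : TE \to T\tE$ is fiberwise surjective, $Tq$ is a simple quotient of the vector bundle $TE \to E$ with $Tq_* TE = T\tE$; taking the $k$-fold Whitney sum over $E$ shows that $T^{\oplus k} q : T^{\oplus k}E \to T^{\oplus k}\tE$ is again a simple quotient, its fibers being bundles with connected (affine) fibers over the connected $q$-fibers. Because $A$ is $q$-basic, the quotient map $q : A \to \tA = q_* A$ is a Lie algebroid morphism; applying the tangent functor --- which sends Lie algebroid morphisms to Lie algebroid morphisms and tangent algebroids to tangent algebroids --- shows $Tq : TA \to T\tA$ is a Lie algebroid morphism. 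Since $Tq$ is simultaneously a simple quotient of the underlying bundle and an algebroid morphism onto $T\tA$, uniqueness of the quotient algebroid structure gives that $TA$ is $Tq$-basic with $Tq_* TA = T\tA$, and likewise $T^{\oplus k}A$ is $T^{\oplus k}q$-basic with $(T^{\oplus k}q)_*(T^{\oplus k}A) = T^{\oplus k}\tA = T^{\oplus k}(q_*A)$.

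Next I would check that $\uom$ is basic for these quotients. Since $\om$ is $q$-basic with $\tom = q_*\om$, we have $\om = q^*\tom$, so for $V_1,\dots,V_k \in T_eE$,
$$ \uom(V_1,\dots,V_k) = \tom(Tq(V_1),\dots,Tq(V_k)) = \underline{\tom}(T^{\oplus k}q(V_1,\dots,V_k)). $$
Thus $\uom = \underline{\tom} \circ T^{\oplus k}q$, i.e. $\uom$ is $(T^{\oplus k}q, id_\R)$-basic with pushforward $\underline{\tom}$. Proposition \ref{prop:quotAmorph} now applies with $(E_1,A_1,q_1) = (T^{\oplus k}E, T^{\oplus k}A, T^{\oplus k}q)$ and $(E_2,A_2,q_2) = (\R, \star, id)$: the morphism $\uom$ is a Lie algebroid morphism and is basic, so its pushforward $\underline{\tom}$ is a Lie algebroid morphism from $(T^{\oplus k}q)_*(T^{\oplus k}A) = T^{\oplus k}(q_*A)$ to $\star$. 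By definition this says exactly that $\tom = q_*\om$ is $(q_*A)$-morphic. The ``in particular'' clause is then immediate: by Proposition \ref{prop:linomqbasic} the components of $q_*\om$ are the $(\tmu,\teta)$ of \eqref{eq:tildemuetadef}, and the equivalence between being morphic and satisfying \eqref{eq:IMeqs} shows $(\tmu,\teta)$ is an IM $k$-form on $q_*A$.

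I expect the main obstacle to be the second step --- establishing that the tangent functor and the Whitney sum are compatible with simple quotients, so that $T^{\oplus k}A$ is genuinely $T^{\oplus k}q$-basic with the expected quotient $T^{\oplus k}(q_*A)$. This is conceptually soft (functoriality of $T$ on algebroid morphisms plus uniqueness of quotient structures), but it requires care that the connected-fiber condition survives the tangent and direct-sum operations so that Proposition \ref{prop:quotAmorph} is truly applicable. A more hands-on alternative that sidesteps this is to verify \eqref{eq:IMeqs} for $(\tmu,\teta)$ directly: choosing $q$-invariant lifts $e_j$ of sections $\tilde e_j$ and using $q_M^*\tmu(q_*e) = \mu(e)$, $q_M^*\teta(q_*e)=\eta(e)$ together with the quotient anchor \eqref{eq:reducedanchor} and bracket \eqref{eq:reducedbracket} (so that $\rho(e_j)$ and $\trho(\tilde e_j)$ are $q_M$-related), the pullbacks $q_M^*$ of the three equations in \eqref{eq:IMeqs} for $(\tmu,\teta)$ reduce to the corresponding equations for $(\mu,\eta)$; injectivity of $q_M^*$ then yields the result, the only subtlety being the passage from invariant lifts of a local frame to arbitrary sections.
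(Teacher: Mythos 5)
Your proposal is correct and follows the same architecture as the paper's proof: both reduce the theorem to Proposition \ref{prop:quotAmorph} applied to $\uom: T^{\oplus k}A \to \R$ with the pair of quotients $(\hat q := T^{\oplus k}q,\, id_\R)$, and both must establish the same four facts (that $\hat q$ is a simple quotient, that $T^{\oplus k}A$ is $\hat q$-basic with quotient $T^{\oplus k}(q_*A)$, that $\uom$ is basic, and that its pushforward is $\underline{q_*\om}$). Where you genuinely diverge is in the key step (2): the paper verifies that $TA$ is $Tq$-basic by a coordinate computation, using the split local frame of Remark \ref{rmk:kerTqcoord} and the explicit structure functions of the tangent lift $TA$ from \cite{bursztyncabrera}, whereas you argue softly that $Tq: TA \to T\tA$ is a Lie algebroid morphism by functoriality of the tangent prolongation, so that $T\tA$ witnesses basicness and uniqueness of quotient algebroid structures forces $Tq_*TA = T\tA$. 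This is a legitimate and arguably cleaner argument, but it leans on the fact that the tangent prolongation of a Lie algebroid morphism is again a morphism of tangent algebroids, which you should cite or prove (the paper's coordinate route is self-contained precisely because it avoids invoking this). Your merging of the paper's steps (3.) and (4.) into the single factorization $\uom = \underline{\tom}\circ T^{\oplus k}q$ is correct and slightly slicker than the paper's separate verification of horizontality and invariance followed by the uniqueness argument. One imprecision to fix: in step (1) you call $Tq$ a simple quotient of ``the vector bundle $TE \to E$,'' but the bundle underlying the algebroid $TA$ is the other leg of the double vector bundle, $Tp: TE \to TM$, covering $Tq_M: TM \to T\tM$; the required properties (fiberwise surjectivity over $Tq_M$, and connectedness of the $T^{\oplus k}q_M$-fibers, which are affine bundles over the connected $q_M$-fibers) do hold for that structure, essentially as your parenthetical indicates, but the verification should be phrased with respect to it. Your fallback of checking the IM equations \eqref{eq:IMeqs} directly on $q$-invariant lifts, using \eqref{eq:tildemuetadef}, \eqref{eq:reducedanchor} and \eqref{eq:reducedbracket} and injectivity of $q_M^*$, would also work and is closer in spirit to how the paper proves the related Theorem \ref{thm:omActrk}, though it sacrifices the conceptual content of the morphic framework.
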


\begin{proof}
	The proof consists of applying Proposition \ref{prop:quotAmorph} to the algebroid morphism $\underline{\om}$. To this end, we will show the following steps:
	\begin{enumerate}
		\item $q$ induces a simple quotient $$\hat q: T^{\oplus k}E \to T^{\oplus k}\tilde E, (V_1,\dots, V_k) \mapsto (Tq(V_1),\dots, Tq(V_k)),$$ where $\tilde E=q_*E$;
		\item the Lie algebroid structure $T^{\oplus k}A$ on  $T^{\oplus k}E$ is $\hat q$-basic and $$ \hat q_*(T^{\oplus k}A)=T^{\oplus k} \tA,$$
		where $\tA=q_*A$ is the quotient Lie algebroid structure;
		\item since $\om$ is $q$-basic, then $\uom$ seen as a vector bundle morphism is $(\hat q,id_\R)$-basic;
		\item $(\hat q,id_\R)_*\uom = \underline{(q_*\om)}$.
	\end{enumerate}
When the above steps are proven, the present Proposition follows directly from Proposition \ref{prop:quotAmorph} applied to the algebroid morphism $\underline{\om}:T^{\oplus k}A \to \R$.

Item (1.) follows directly from the split normal form of $E$ relative to $q$ in Remark \ref{rmk:kerTqcoord}. 

For item (2.), we will follow the computations in \cite{bursztyncabrera} involving tangent lifted algebroid structures. We first argue that if $A$ is $q$-basic, then $TA$ is $Tq$-basic. Since this statement is of infinitesimal nature, we can restrict to a local chart $(x^i,u^a)$ of $E$ as in Remark \ref{rmk:kerTqcoord}, where the underlying local basis of sections of $E$ is split as $\{e_a\}=\{e_k\}\cup \{e_\a\}$, with $e_k$ spanning $K\subset E$ and $e_\a$ being $q$-invariant. Given any (local) section $e$ of $E$, there are induced sections, $\hat e, Te:TM\to TE$, locally defined by 
$$ \hat e_a (x,\dot x) = (x,\dot x, u=0, \dot{u}^b= \delta^b_a), \ Te_a(x,\dot x) = (x,\dot x, u^b=\delta^b_a, \dot u =0),$$
where $(x,\dot x, u, \dot u)$ are the natural induced local coordinates on $TE$.

The Lie algebroid structure $TA$ on $TE\to TM$, with anchor denoted $\rho_{TA}$ and bracket $[\cdot,\cdot]_{TA}$ is uniquely characterized by the local expressions
\begin{eqnarray} \rho_{TA}(\hat e_a) = \rho^i_a \partial_{\dot x^i}, \ \rho_{TA}(Te_a)= \rho^j_a \partial_{x^j} + \dot x^i (\partial_{x^i}\rho_a^j) \partial_{\dot x^j}, \nonumber \\
  \ [ \hat{e}_a,\hat e_b ]_{TA} =0, \ [Te_a,\hat e_b]_{TA} = C^c_{ab} \hat e_c , \ [Te_a,T e_b]_{TA} = C^c_{ab} T e_c + \dot{x}^i (\partial_{x^i}C^c_{ab}) \hat e_c \nonumber
 \end{eqnarray}
where $\rho(e_a)=\rho_a^i \partial_{x^i}$ and $[e_a,e_b]= C^c_{ab} e_c$ define the local structure functions of $A$. See details in \cite{bursztyncabrera} and references therein.

Splitting the $x^i$ coordinates into $\{y^l\}\cup \{\tilde x^n\}$ so that $q_M(x)=\tilde x$, then $q$ is uniquely defined by
$$ q(e_k(x))=0, \ q(e_\a(x))= \bar e_\a(\tilde x),$$
where $\bar e_\a$ is the induced local basis of sections for $\tE \to \tM$. With these notations, and the characterization in Proposition \ref{prop:Abasic} of the $q$-basic algebroid structure $A$ (re-expressed in the present coordinates), it follows directly that $TA$ is $Tq$-basic. Next, it is also immediate from the coordinate characterization of the algebroid structure $T^{\oplus k}A$ given in \cite[\S 3.2]{bursztyncabrera}, that item (2.) holds.

For (3.), we need to show that the items in Lemma \ref{lma:basicmorphvb} hold for $\underline{\om}:T^{\oplus k}E \to \R$ (whose base map is $T^{\oplus k}M \to \star$) with respect to the pair of simple quotients $(\hat q, id_\R)$. These boil down to,
$$ \text{(3a.) } \underline{\om}(V_1,\dots, V_k) = 0 \text{ when $Tq(V_j)=0$ for all $j$, }$$
and 
$$ \text{(3b.) } \underline{\om}(V_1,\dots, V_k)=  \underline{\om}(V'_1,\dots, V'_k)\text{ when $Tq(V_j)=Tq(V_j')$ for all $j$. }$$
Now, both $(3a., 3b.)$ follow direclty from $\om = q^*\tilde \om$ being $q$-basic by hypothesis.

Finally, for item (4.), we use the characterization of the quotient morphism $\tilde \Phi:=(\hat q,id_\R)_*\uom$ (see Definition \ref{def:basicvbmorph}) yielding $\tilde \Phi \circ \hat q = \uom$, so that
$$ \tilde \Phi(Tq(V_1),\dots,Tq(V_k)) = \uom(V_1,\dots, V_k).$$
Using $\om=q^*\tom$ by hypothesis, we see that $(\hat q,id_\R)_*\uom = \underline{\tom}$ as wanted. This finishes the proof of the theorem.
\end{proof}

\begin{example}[Basic $k$-forms as basic IM $k$-forms]
	Let $q_M:M\to \tilde M$ be a surjective submersion with connected fibers and consider $q_M^\nu:E:=TM \to T\tM$ the induced normal simple quotient described in Example \ref{ex:normalquot}. A closed $k$-form $\omega_M\in\Omega^k(M)$ can be seen as an IM $k$-form $\mu:=\omega_M^{\flat}:TM\to \Lambda^{k-1}T^*M$ on the tangent algebroid $A=TM$. Following Proposition \ref{prop:linomqbasic}, it is immediate to verify that the IM $k$-form $\omega_M^\flat$ is $q_M^\nu$-basic iff $\om_M$ is $q_M$-basic in the standard sense. In this case, the quotient IM $k$-form $q_*\mu = \tilde \mu$ is given by $\tilde \om_M^\flat$, where $\om_M=q_M^* \tilde \om_M$.
\end{example}

\begin{example}[Quotient of $G$-equivariant IM $2$-forms]\label{ex:Gequivar} Let $G$ be a Lie group and $E\to M$ be a $G$-equivariant vector bundle equipped with a $G$-invariant Lie algebroid structure $A$. Suppose that $\mu:E\to T^*M$ is a $G$-equivariant IM $2$-form, where $T^*M$ is equipped with the cotangent lift of $G\curvearrowright M$. It was shown in \cite{bursztyncabrera2} that $\mu$ can be reduced to an IM $2$-form on a quotient Lie algebroid of the form $B/G$, for a certain Lie subalgebroid $B$ of $A$. Here we explain how this reduction procedure involves, as one of its steps, the quotient procedure of Theorem \ref{thm:Aandomqbasic}. 
	For that, assume that $G$ acts freely and properly on $M$, so we have a surjective submersion $q_M:M\to M/G=\tM$ defining a principal $G$-bundle. 
	In this case, one has that $\ker(Tq_M)$ is given by the infinitesimal action of $\gg=Lie(G)$ at each point of $M$ and the cotangent lifted action $G\curvearrowright T^*M$ is hamiltonian with moment map $j:T^*M\to \mathfrak{g}^*$ given by the dual of the infinitesimal action map. In \cite{bursztyncabrera2}, it is proven that the composition $J:=j\circ \mu:E\to \mathfrak{g}^*$ is a Lie algebroid morphism and that, whenever smooth, $(E':=J^{-1}(0)\to M) \subset (E\to M)$ defines a Lie subalgebroid $B$ of $A$. Since $\mu$ is $G$-equivariant, then $B$ carries a canonical $G$-action which induces a simple quotient $q:E'\to \tE=J^{-1}(0)/G$ with components $(q_M,K=0,\Delta)$ where $\Delta$ is given by the $G$-action on $q_M$-fibers. Moreover, $B$ is $q$-basic. In this setting, the restriction $\mu_B:=\mu|_B:B\to T^*M$ automatically defines an IM $2$-form $(\mu_B,\eta=0)$ on $B$ (see Remark \ref{rmk:restrictionIM}) with the following properties:
	\begin{enumerate}
		\item $\mathrm{im}(\mu_B)\subseteq \mathrm{Ann}(\ker(Tq_M))=J^{-1}(0),$ 
		\item $\mu_B$ seen as a vector bundle map $B\to \mathrm{Ann}(\ker(Tq_M))$ is equivariant with respect to the given $\Delta$ on $B$ and to the conormal $\Delta^{\nu^*}$ on $\mathrm{Ann}(\ker(Tq_M))$ given in Example \ref{ex:conormalquot}.
	\end{enumerate}
	Following Proposition \ref{prop:linomqbasic}, it is immediate to show that $(\mu_B,0)$ is $q$-basic. By Theorem \ref{thm:omActrk}, the quotient $q_*(\mu_B,0)$ defines an IM $2$-form for the quotient algebroid structure $\tilde B=q_*B$ on $(\tE = J^{-1}(0)/G \to M/G)$. It is straighforward to check that this quotient IM $2$-form coincides with the \emph{reduction of $\mu$ by $G$} as described in \cite{bursztyncabrera2}.
\end{example}

\subsection{Constant rank morphic forms and quotient by their kernels}\label{subsec:ctrkA}

In this subsection, we consider morphic forms of constant rank satisfying extra regularity conditions and proceed to obtain simple quotients by their kernels. We only consider the case of linear $k$-forms with $k\geq 2$, as the other cases $k=0,1$ only yield trivial quotients due to linearity.

The main object of study in this subsection is given in the following definition 


\begin{definition}
	A linear $k$-form $\om$ on $E\to M$ with $k\geq 2$ is called {\bf kernel-reducible} when
	\begin{enumerate}
		\item $Ker(\om)\subseteq TE$ has constant rank as a distribution on $E$,
		\item $Ker(\om)\subset Ker(d\om)$,
		\item the quotient map $q:= q_{Ker(\om)}$ into the leaf space of the foliation integrating $Ker(\om)\subseteq TE$ defines a simple quotient of $E\to M$.
	\end{enumerate}
\end{definition}
It is clear from the definitiontt that a kernel-reducible $k$-form $\omega$ is always $q_{Ker(\om)}$-basic, hence defining a linear form $q_{Ker(\om) *}\om$ on $q_{Ker(\om) *}E$.
Denoting $(\mu,\eta)$ the components of $\om$, we also say that $(\mu,\eta)$ is kernel-reducible when $\om$ is.

We begin by characterizing when a linear form has constant rank and when the condition $Ker(\om)\subset Ker(d\om)$ holds (see Section \ref{subsec:general}), in terms of the components of $\om$.
\begin{lemma}\label{lem:keromlin}
Let $\omega$ a linear $k$-form on $E\to M$ with components $(\mu,\eta)$, $k\geq 2$. Then, $\om$ has constant rank iff the following conditions hold:
	\begin{enumerate}
	\item $\mu: E \to \Lambda^{k-1}T^*M$, as a vector bundle morphism over $id_M$, has constant rank;
	\item the vector bundle morphism over $id_M$ given by
	$$ \mu^\sharp: TM \to E^*\otimes \Lambda^{k-2}T^*M, \ \mu^\sharp(v)(e,v_1,\dots,v_{k-2})=\mu(e)(v,v_1,\dots, v_{k-2}), \ e\in E_x, v,v_j\in T_xM,$$
	has constant rank;
	\item $i_v(d\mu(e)+\eta(e)) \in \Gamma( Im(\mu) )$ for every $v\in \Gamma(Ker(\mu^\sharp))$ and $e\in \Gamma E$.
\end{enumerate}
In this case, $Ker(\om)\subset Ker(d\om)$ iff the following hold:
	\begin{enumerate}
	\item[4.] $Ker(\mu)\subset Ker(\eta)$ and $Ker(\mu^\sharp)\subset Ker(\eta^\sharp)$;
	\item[5.] $Ker(\mu^\sharp)\subset TM$ is involutive as a distribution on $M$;
	\item[6.] the partial connection defined by
	$$ \bna: \Gamma(Ker(\mu^\sharp)) \times \Gamma(E/Ker(\mu)) \to \Gamma(E/Ker(\mu)), \ \mu\left(\bna_v(e \ mod \ Ker(\mu))\right)=i_vd\mu(e),$$
	is flat;
	\item[7.] if $\bna_v(e \ mod \ Ker(\mu)) =0$ then $i_vd\eta(e)=0$.
\end{enumerate}
\end{lemma}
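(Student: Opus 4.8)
The plan is to reduce everything to a pointwise description of the kernels of $\om$ and $d\om$ in a local frame, and then read off the stated conditions. First I would fix a local basis of sections $\{e_a\}$ of $E$ over a chart, with induced coordinates $(x^i,u^a)$ on $E$ as in Remark \ref{rmk:kerTqcoord}, and use the coordinate expressions
$$ \om = du^a\wedge\mu(e_a) + u^a(d\mu(e_a)+\eta(e_a)), \qquad d\om = du^a\wedge\eta(e_a)+u^a\,d\eta(e_a),$$
exactly as in the proof of Proposition \ref{prop:linomqbasic}. Contracting with a tangent vector $V=(\dot x,\dot u)$ and separating terms by their $du$-degree, I would show that $V\in Ker(\om|_{\mathbf e})$ (where $\mathbf e=u^ae_a(x)$) iff $\dot x\in Ker(\mu^\sharp|_x)$ and $\mu(\dot u)=-i_{\dot x}\big(d\mu(\mathbf e)+\eta(\mathbf e)\big)$, the latter equation being solvable for the vertical part $\dot u$ precisely when its right-hand side lies in $Im(\mu|_x)$. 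An identical computation for $d\om$ gives $V\in Ker(d\om|_{\mathbf e})$ iff $\dot x\in Ker(\eta^\sharp|_x)$ and $\eta(\dot u)=-i_{\dot x}d\eta(\mathbf e)$.

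For the constant-rank statement I would then record the dimension count
$$ \dim Ker(\om|_{\mathbf e}) = \dim Ker(\mu|_x) + \dim S_{\mathbf e},$$
where $S_{\mathbf e}\subseteq Ker(\mu^\sharp|_x)$ is the subspace of those $\dot x$ for which the vertical equation is solvable. Since $S_{0_x}=Ker(\mu^\sharp|_x)$ and always $S_{\mathbf e}\subseteq Ker(\mu^\sharp|_x)$, constant rank along each fiber forces $S_{\mathbf e}=Ker(\mu^\sharp|_x)$, which is exactly item 3 (a short computation with a general section $e=f^ae_a$ upgrades item 3 from the frame sections to all sections, using $i_v\mu(e_a)=0$ for $v\in Ker(\mu^\sharp)$). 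Then $\dim Ker(\om)= \dim Ker(\mu)+\dim Ker(\mu^\sharp)$, and since both summands are upper semicontinuous integer-valued functions whose sum is constant, each is separately locally constant; this yields items 1 and 2. The converse is immediate from the same dimension count.

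For the second part I would assume constant rank (hence items 1--3) and analyze the inclusion $Ker(\om)\subseteq Ker(d\om)$ pointwise. Purely vertical kernel vectors ($\dot x=0$) give $Ker(\mu)\subseteq Ker(\eta)$, and since by item 3 every $\dot x\in Ker(\mu^\sharp)$ occurs as the base component of some $\om$-kernel vector, membership in $Ker(d\om)$ forces $\dot x\in Ker(\eta^\sharp)$, i.e. $Ker(\mu^\sharp)\subseteq Ker(\eta^\sharp)$; together these are item 4. With item 4 in force $i_{\dot x}\eta(\mathbf e)=0$, so the $\om$-kernel equation becomes $\mu(\dot u)=-i_{\dot x}d\mu(\mathbf e)$, i.e. $\dot u\equiv -\bna_{\dot x}(\mathbf e\ \mathrm{mod}\ Ker(\mu))$, and the $d\om$-membership reduces to the single identity $i_{\dot x}d\eta(\mathbf e)=\bar\eta(\bna_{\dot x}(\mathbf e\ \mathrm{mod}\ Ker(\mu)))$. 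I would show this identity is equivalent to item 7: it clearly implies item 7; conversely, item 7 says the residual map $e\mapsto i_{\dot x}d\eta(e)-\bar\eta(\bna_{\dot x}\bar e)$ factors through $\bna_{\dot x}$, and since sections vanishing at $x$ already surject onto $E/Ker(\mu)$ under $\bna_{\dot x}$ (where the residual vanishes by a direct check), this map is identically zero.

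Finally I would obtain items 5 and 6, and this reconciliation of the \emph{pointwise} inclusion with the \emph{integrability} conditions 5 and 6 is the step I expect to be the main obstacle. The key observation is that, once item 4 holds, the kernel computation identifies $Ker(\om)$ with the linear distribution $T\F_{(q_M,K,\bna)}$ of Remark \ref{rmk:integrablebna} determined by $K=Ker(\mu)$, the base distribution $Ker(\mu^\sharp)$, and the partial connection $\bna$. By the Cartan-formula argument recalled in Section \ref{subsec:general}, the inclusion $Ker(\om)\subseteq Ker(d\om)$ together with constant rank forces $Ker(\om)$ to be involutive; invoking Remark \ref{rmk:integrablebna}, involutivity of such a linear distribution is precisely integrability of its base distribution $Ker(\mu^\sharp)$ (item 5) together with flatness of $\bna$ (item 6). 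Conversely, items 4 and 7 already give the pointwise inclusion by the computation above, so the four conditions are together equivalent to $Ker(\om)\subseteq Ker(d\om)$. Care must be taken throughout that item 7 and the connection $\bna$ are well defined --- this uses item 3 to ensure $i_vd\mu(e)\in Im(\mu)$ --- and that the jet-dependence in the definition of $\bna$ is handled via the frame computation rather than pointwise values of $\mathbf e$.
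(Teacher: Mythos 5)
Your proposal is correct, and its computational core --- the chart expressions for $\om$ and $d\om$ from the proof of Proposition \ref{prop:linomqbasic} and the resulting pointwise description of the two kernels --- coincides with the paper's proof; but you organize both halves around genuinely different arguments. For items 1 and 2 the paper shows that $Ker(\mus)$ is a smooth distribution (projecting kernel vectors onto the zero section, following an argument of \cite{J}) and then invokes the fact that a smooth intersection of subbundles has constant rank; your alternative --- the sum $\dim Ker(\mu|_x)+\dim Ker(\mus|_x)$ is constant and each summand is upper semicontinuous, hence each is locally constant --- is cleaner and dispenses with the auxiliary smoothness lemma, while your derivation of item 3 from fiberwise comparison with $0_x$ matches the paper's. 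For the second half the paper proves items 5--7 by hand: it extends sections of $Ker(\mus)$ to vector fields tangent to $Ker(\om)$ via the Christoffel symbols of $\bna$, projects brackets to get item 5, reads flatness off the vertical component of $[V_1,V_2]$ for item 6, and obtains item 7 by passing to a $\bna$-flat frame. You instead isolate the single identity $i_v d\eta(e)=\bar\eta(\bna_v \bar e)$ and prove it equivalent to item 7 by a tensoriality/factoring argument (correct, and the $C^\infty$-linearity of your residual in $e$ does hinge on item 4, as you use; note it requires reading item 7 pointwise, i.e. $\bna_v\bar e|_x=0\Rightarrow i_vd\eta(e)|_x=0$, which is harmless since the paper's flat-frame argument establishes exactly that version), and then obtain items 5 and 6 by identifying $Ker(\om)$ with the linear distribution attached to $(Ker(\mus),Ker(\mu),\bna)$ and quoting the equivalence ``linear distribution involutive iff base distribution involutive and connection flat.'' One caution there: Remark \ref{rmk:integrablebna} records only the direction you do not need (flat data determine an integrable linear distribution); the converse you invoke is \cite[Thm. 4.5]{JO} --- which the paper itself cites in the remark immediately following the Lemma --- or else the paper's inline Christoffel computation, so cite that theorem rather than the remark. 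A pleasant by-product of your organization, absent from the paper, is the observation that items 5 and 6 are in fact consequences of items 1--4 and 7: the inclusion $Ker(\om)\subset Ker(d\om)$ already follows from 4 and 7 alone, and involutivity of $Ker(\om)$ (Cartan formula plus constant rank) then forces 5 and 6; the paper instead derives each of 4--7 from the inclusion and uses all of them, via adapted charts and flat frames, for the converse.
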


\begin{proof}
	We begin with the conditions \emph{1.- 3.} for $\om$ having constant rank. At a point $0_x\in E$, the tangent space $T_{0_x}E $ splits naturally as
	$$ E_x \oplus T_x M \simeq T_{0_x}E , (e,v)\mapsto \frac{d}{dt}|_{t=0}(te) + T0(v),$$
where $0:M\to E$ denotes the zero section.
	Using this decomposition, it follows from the definition of $(\mu,\eta)$ that (see also eq. \eqref{eq:Vkercoord} below)
	\begin{equation}\label{eq:Keromzero} T_{0_x}E\supset Ker(\om|_{0_x}) \simeq Ker(\mu|_{E_x}) \oplus Ker(\mu^\sharp|_{T_xM}).\end{equation}
	
	Let us now show that when $Ker(\om)$ has constant rank, then $Ker(\mu^\sharp)$ has constant rank, following an argument in \cite[Lemma 3.5]{J}. We observe that $Ker(\mu^\sharp)$ is given by the intersection of two subbundles, $Ker(\om)$ and the tangent to the zero section $T0(TM)$, inside $TE|_{0(M)}$. Since $\om$ is linear, it is easy to verify (see formulas below) that if $V\in Ker(\om|_e), \ e\in E,$ then its projection $Tp(V)\in T0(TM)$ is in $Ker(\om|_{0(p(e))})$, where $p:E\to M$ is the bundle projection. From this follows that $Ker(\mu^\sharp)$ is smooth as a distribution, i.e. every element in $Ker(\mu^\sharp)$ can be extended to a smooth section of $TE|_{0(M)}$ which takes values in $Ker(\mu^\sharp)$. A general result states that, when the intersection of two subbundles is smooth, then the intersection has constant rank. This shows that $Ker(\mu^\sharp)$ has constant rank, and by \eqref{eq:Keromzero}, then $Ker(\mu)$ has constant rank as well. This proves that $Ker(\om)$ having constant rank implies (1.,2.).
	
	We now consider a point $e_0\in E$ away from $0(M)$ and study the rank of $Ker(\om|_{e_0})$. To that end, take a coordinate chart $(U\subset M, x^j)$ around $x_0:=p(e)$ and a local basis of sections $\{e_a:U \to E\}$ defined around $x_0$. With respect to these coordinates, $E|_U\simeq U \times \R^{rk(E)}$, as in Remark \ref{rmk:kerTqcoord}, inducing coordinates $(x^j,u^a)$ on $E|_U$. In these coordinates, the linear form $\om$ reads
	\begin{equation*}
	\om|_{E|_U} = du^a \wedge \mu(e_a) + u^a d\mu(e_a) + u^a \eta(e_a),
	\end{equation*}
	where $\mu(e_a),\eta(e_a)\in \Omega(U)$ are seen as basic forms on $E|_U\to U$. A tangent vector $T_e E\ni V\simeq (v,\dot u)\in T_{p(e)}U\times \R^{rk(E)}$ satisfies $i_V \om = 0$ iff
	\begin{equation}\label{eq:Vkercoord} i_v\mu(e_a) = 0, \forall a; \ \dot u^a \mu(e_a) = - u^ai_v(d\mu(e_a)+\eta(e_a)).\end{equation}
	Notice that the decomposition \eqref{eq:Keromzero} becomes transparent from the above coordinate expression taking $u^a=0$ for every $a$.
	As a consequence of eq. \eqref{eq:Vkercoord}, the rank of $Ker(\om|_e)$ is equal to the rank of $Ker(\om|_{0(p(e))})$ for each $e\in E|_U$ iff
	$$ i_v(d\mu(e_a)+\eta(e_a)) \in Im(\mu|_{E_{p(e_a)}}), \forall a, \forall v\in Ker(\mu^\sharp).$$
	Since $\{e_a\}$ spans a local basis of sections of $E$ and $i_v\mu(e)=0$ for $v\in Ker(\mu^\sharp)$, then the above equation is equivalent to condition (3.) for arbitrary sections $e$. These arguments show that $Ker(\om)$ having constant rank is equivalent to conditions $(1.-3.)$.
	
	For the second part, let us assume $Ker(\om)\subset Ker(d\om)$ and recall from Section \ref{subsec:general} that $Ker(\om)\subset TE$ defines an involutive distribution on $E$. We observe that, in the coordinates $(x^j,u^a)$ used above,
	$$ i_{(v,\dot u)} d\om = 0 \iff i_v\eta(e_a) = 0, \forall a; \ \dot u^a \mu(e_a) = - u^ai_v(d\eta(e_a)).$$
	Then,  $Ker(\mu^\sharp)\subset Ker(\eta^\sharp)$ and, from the case $u^a=0$, $Ker(\mu)\subset Ker(\eta)$, showing item (4.) in the Lemma. With these conditions, eq. \eqref{eq:Vkercoord} reduces to
	$$i_v\mu(e_a) = 0, \forall a; \ \dot u^a \mu(e_a) = - u^a\ i_vd\mu(e_a). $$
	Now, let us discuss the partial connection $\bna$ of point (6.). It is clear from the definition that $\bna$ is a derivation on its second argument. Suppose that the basis $\{e_a\}$ is taken as in Remark \ref{rmk:kerTqcoord} so that the first $rk(Ker(\mu))$-elements define a basis of $Ker(\mu)$ and use the index $\{e_\a \}$ for the other elements, which span a complement to $Ker(\mu)$. Notice that this means that $\{\mu(e_\a) \}\subset \Omega(U)$ is linearly independent and $\{e_\alpha\}$ induces a basis of sections for $E/Ker(\mu)$. In the induced coordinates for $E|_U$, we can thus write the Christoffel symbols for $\bna$ as
	$$ \bna_v e_\a = v^j \Gamma_{\a j}^\b(x) e_\b, \overset{def. \ \bna}{\iff} v^j\Gamma_{\a j}^\b \mu(e_\b) = i_v d\mu(e_\a), \ \forall \a.$$
	We now use them to characterize the kernel of $\om|_{E|_U}$. For each $v \in Ker(\mu^\sharp|_x), \ x\in U$, and each $u$, we define 
	$$(x,u,v)\mapsto \dot u^\a(x,u,v) = - u^\b v^j \Gamma^\a_{\b j}(x),$$
	and take the $\dot u^a$ corresponding to the first $Ker(\mu)$-indices $a$ arbitrarily (eg: equal to zero). 
	Then,
	$$ i_{(v,\dot u(x,u,v))}\om|_{(x,u)} = 0, \ \forall (x,u)\in E|_U, v\in Ker(\mu^\sharp|_x),$$
	because of the updated version of eq. \eqref{eq:Vkercoord} above and the definition of $\bna$.
	
	With these computations, to see that $Ker(\mu^\sharp)\subset TM$ is involutive, we take two sections $v_1,v_2 \in \Gamma(Ker(\mu^\sharp))$ seen as vector fields and we want to check $[v_1,v_2]|_{x_0}$ is in $Ker(\mu^\sharp)$. In our coordinate chart, we can extend $v_k$ to a vector field $V_k(x,u)=(v_k(x), \dot u(x,u,v_k(x)))\in Ker(\om|_{E|_U}), k=1,2,$ as above. Since $Ker(\om)$ is involutive, $i_{[V_1,V_2]}\om =0$. As stated before, the linearity of $\om$ implies that $Tq([V_1,V_2]) \in Ker(\mu^\sharp)$. But, by the construction of $V_k$'s, $Tq([V_1,V_2])|_{x_0}=[v_1,v_2]|_{x_0}$, thus finishing the proof of (5.).
	
	To prove (6.), we take $V_1, V_2$ as above and use the fact that $[V_1,V_2]\in Ker(\om|_{E|_U})$. Writting 
	$$ [V_1,V_2] = ([v_1,v_2], Z), \text{ then } Z^\b\mu(e_\b) = -u^\a i_{[v_1,v_2]}d\mu(e_\a), \ \forall u,$$
	as a consequence of the updated eq. \eqref{eq:Vkercoord}. By direct computation of $Z$ using Lie brackets, the above equation results equivalent to
	$$ -\bna_{v_1}\bna_{v_2}e_\a + \bna_{v_2}\bna_{v_1}e_\a = -\bna_{[v_1,v_2]} e_\a, \forall \a,$$
	showing that $\bna$ must be flat as in (6.).
	
	To prove (7.), the idea is to take the basis elements $\{e_\b \}$ to be $\bna$-flat, which is now possible by (6.). With this choice, $\dot u^\a(x,u,v) =0$ (equiv. $\Gamma^\a_{\b j}=0$).  Since $(v,\dot u(x,u,v))\in Ker(\om|_{E|_U})$, by hipothesis we must have $ i_{(v,\dot u(x,u,v))} d\om = 0$. By our coordinate expression for $Ker(d\om)$ above, this last condition is equivalent to
	$$ u^\a i_v d\eta(e_\a) = 0, \forall u.$$
	Since $i_v \eta(e) =0$ for $v\in Ker(\mu^\sharp)\subset Ker(\eta^\sharp)$, we get that (7.) must hold for all sections $e$.
	
	Finally, the converse implication (1.-7.)$\Rightarrow Ker(\om)\subset Ker(d\om)$ follows directly from the local expressions by considering, at each point, a coordinate chart $U$ adapted to the foliation defined by $Ker(\mu^\sharp)$ and a flat basis $\{e_{\a}\}$ of $E/K|_U$ as above.
\end{proof}

\begin{remark}
	The triple $(q_M,Ker(\mu),\bna)$ defines an infinitesimal ideal system for $E\to M$, seen as a Lie algebroid with the zero anchor and bracket, as defined in \cite{JO}. The corresponding linear involutive distribution is $Ker(\om)$ (see \cite[Thm. 4.5]{JO}).
\end{remark}

We collect the conditions of the previous Lemma into a definition.

\begin{definition}\label{def:infinitkerred}
	Let $\om$ a linear $k$-form on $E\to M$ with components $(\mu,\eta)$, with $k\geq 2$. If $\om$ satisfies all the conditions in Lemma \ref{lem:keromlin}, we say that $\om$ is (equiv. its components $(\mu,\eta)$ are) {\bf infinitesimally kernel-reducible}.
\end{definition}

Observe that, when $\om$ is kernel-reducible thus defining a simple quotient $q_{Ker(\om)}\equiv (q,q_M)$, the foliation $\F_M=\F(q_M)$ integrates $Ker(\mus)\subset TM$ and the components $(q_M, K,\Delta)$ of $q_{Ker(\om)}$ satisfy: $K=Ker(\mu)$ and $\Delta$ integrates $\bna$ as in Remark \ref{rmk:integrablebna}.
\begin{proposition}\label{prop:qomred}
	Let $\om$ be a linear $k$-form on $E\to M$ with $k\geq 2$. Then, $\om$ is kernel-reducible iff $\om$ is infinitesimally kernel-reducible, the foliation $\F_M$ integrating the involutive distribution $Ker(\mu^\sharp)\subset TM$ is simple and the flat partial connection $\bna$ integrates to a representation $\Delta$ of $M\times_{\F_M} M$ on $E/Ker(\mu)$ (see Remark \ref{rmk:integrablebna}).
	In this case, $\om$ is $q_{Ker(\om)}$-basic and $q_{Ker(\om) *}\om$ has trivial kernel. 
\end{proposition}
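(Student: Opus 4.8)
The plan is to reduce the statement to the integrability criterion of Remark \ref{rmk:integrablebna}, using Lemma \ref{lem:keromlin} to translate the pointwise conditions in the definition of kernel-reducibility into statements about the components $(\mu,\eta)$. First I would observe that, by Lemma \ref{lem:keromlin} together with Definition \ref{def:infinitkerred}, the two conditions ``$Ker(\om)$ has constant rank'' and ``$Ker(\om)\subset Ker(d\om)$'' appearing in the definition of kernel-reducibility are \emph{exactly} the statement that $\om$ is infinitesimally kernel-reducible. Consequently $\om$ is kernel-reducible if and only if $\om$ is infinitesimally kernel-reducible and, in addition, the leaf-space projection $q_{Ker(\om)}$ of the (now regular and involutive, by the Cartan-formula argument recalled in Section \ref{subsec:general}) distribution $Ker(\om)\subset TE$ defines a simple quotient of the vector bundle $E\to M$. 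Thus the whole content of the proposition becomes the equivalence, \emph{assuming $\om$ infinitesimally kernel-reducible}, between ``$q_{Ker(\om)}$ is a simple quotient of $E\to M$'' and the conjunction of $\F_M$ being simple and $\bna$ integrating to a representation $\Delta$.

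The key step is to identify $Ker(\om)\subset TE$ with the linear distribution $T\F_{(q_M,K,\bna)}$ of Remark \ref{rmk:integrablebna}. For this I would read off the coordinate description of $Ker(\om)$ already obtained in the proof of Lemma \ref{lem:keromlin}: in an adapted chart $(x^j,u^a)$ with a local basis $\{e_k\}\cup\{e_\a\}$, where $e_k$ span $Ker(\mu)$ and $e_\a$ span a complement, a vector $(v,\dot u)$ lies in $Ker(\om)$ precisely when $v\in Ker(\mus)$, the components $\dot u^\a = -u^\b v^j\Gamma^\a_{\b j}$ are fixed by the Christoffel symbols of the flat partial connection $\bna$ of item 6, and $\dot u^k$ is arbitrary. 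Since $\om$ is linear, $Ker(\om)$ is $h_\lambda$-invariant and hence a sub-double vector bundle of $TE$ (this is the linearity already used in the proof of Lemma \ref{lem:keromlin} to show $Tp(Ker(\om))\subseteq Ker(\mus)$). Comparing the displayed coordinate form with Remark \ref{rmk:kerTqcoord} then shows that $Ker(\om)$ is exactly the linear distribution determined, in the sense of Remark \ref{rmk:integrablebna}, by the data: base distribution $Ker(\mus)\subset TM$, vertical subbundle $K=Ker(\mu)\subset E$ (matching the splitting \eqref{eq:Keromzero} along the zero section), and partial connection $\bna$.

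With this identification in hand, I would apply Remark \ref{rmk:integrablebna} in two stages, which is where the main subtlety lies. The foliation that $\F_{Ker(\om)}$ induces on the base is the one integrating $Tp(Ker(\om))=Ker(\mus)$, namely $\F_M$; since the base map of any simple quotient of $E\to M$ is itself a simple quotient of $M$, the condition that $\F_M$ be simple is necessary and is precisely what produces the map $q_M$ that Remark \ref{rmk:integrablebna} takes as input. Granting $q_M$, the triple $(q_M,Ker(\mu),\bna)$ is of exactly the form considered there (with $\bna$ flat by item 6), and the Remark says that $E\to E/\F_{Ker(\om)}$ defines a simple quotient of $E\to M$ if and only if $\bna$ integrates to a representation $\Delta$ of $M\times_{\F_M}M$ on $E/Ker(\mu)$. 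The main obstacle is therefore essentially bookkeeping: one must respect that the integrability criterion of Remark \ref{rmk:integrablebna} presupposes the \emph{simple} base quotient $q_M$, so the two clauses ``$\F_M$ simple'' and ``$\bna$ integrable'' cannot be collapsed into one; and one should check that the $(K,\Delta)$ read off from $Ker(\om)$ indeed coincide with the components of $q_{Ker(\om)}$ under the $1$--$1$ correspondence between simple quotients and quotient data. This yields the claimed equivalence.

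Finally, for the ``in this case'' assertions I would argue directly. That $\om$ is $q_{Ker(\om)}$-basic is the observation recalled in Section \ref{subsec:general}: writing $q=q_{Ker(\om)}$, horizontality is immediate since $i_V\om=0$ for $V\in Ker(\om)=Ker(Tq)$, and infinitesimal invariance follows from $L_V\om = i_V d\om + d\,i_V\om = 0$ using $Ker(\om)\subset Ker(d\om)$; connectedness of the $q$-fibers upgrades these to $q$-basicness, giving $\om=q^*\tom$ with $\tom=q_*\om$. To see that $\tom$ has trivial kernel, take $\tilde V$ with $i_{\tilde V}\tom=0$ and a lift $V$ with $Tq(V)=\tilde V$; then $\om(V,\cdot,\dots,\cdot)=\tom(\tilde V,Tq\,\cdot,\dots)=0$, so $V\in Ker(\om)=Ker(Tq)$ and hence $\tilde V=Tq(V)=0$.
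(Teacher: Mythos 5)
Your proposal is correct and follows essentially the same route as the paper: both reduce the statement to Lemma \ref{lem:keromlin} plus the coordinate identification of $Ker(\om)$ with the linear distribution determined by $(Ker(\mu^\sharp),Ker(\mu),\bna)$, and then invoke the correspondence of Remark \ref{rmk:integrablebna} (the paper phrases the converse as constructing $q$ from the quotient data $(q_M,Ker(\mu),\Delta)$ and checking $Ker(Tq)=Ker(\om)$ in $\bna$-flat frames, which is the same computation you perform with Christoffel symbols). Your explicit verification of the ``in this case'' assertions via the Cartan formula and lifting of vectors is also sound; the paper simply defers these to the generalities recalled in its introductory subsection on basic forms.
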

\begin{proof}
	The implication $\Rightarrow$ follows from Lemma \ref{lem:keromlin}. For the converse, denote $q$ the simple quotient of $E\to M$ defined by the quotient data $(q_M,K=Ker(\mu), \Delta)$. By Lemma \ref{lem:keromlin} and the definition of being kernel-reducible, we only need to show that $Ker(Tq)=Ker(\om)$.
	
	Around each point in $E$, we can use the adapted-coordinate description of $Ker(\om|_{E|_U})$ used in the proof of Lemma \ref{lem:keromlin}, yielding
	$$ Ker(\om|_{E|_U}) = \{ (\dot x, \dot u)\in T_{x}U \times \R^{rk(E)}: \dot x \in Ker(\mu^\sharp), \ \dot u^\a=0  \},$$
	where the fiber coordinates $u^a$ are defined using local basis of sections $\{e_a\}$ of $E|_U$ for which the first $rk(Ker(\mu))$-elements span $Ker(\mu)$ and the rest, which are denoted $\{e_\a\}$ and induce a basis of $E/Ker(\mu)$, are taken to be $\bna$-flat. In these coordinates, $K=Ker(\mu)$ is defined by $u^\a=0, \forall \a$.  

	Using the same set of coordinates, recalling that $Ker(\mu^\sharp)=Ker(Tq_M)$ by hypothesis, it follows from the coordinate description given in Remark \ref{rmk:kerTqcoord} for $Ker(Tq)$, that $Ker(\om)=Ker(Tq)$ as wanted.
\end{proof}

For the reader's convenience, we summarize the conditions for $\om$ to be kernel-reducible in a non-redundant list.

\begin{corollary}\label{cor:linomkerred}
	Let $\om$ be a linear $k$-form on $E\to M$ with components $(\mu,\eta)$. Then, $\om$ is kernel-reducible iff the following hold:
	\begin{enumerate}
		\item $\mu:E \to \Lambda^{k-1}T^*M$ has constant rank as a vector bundle morphism over $id_M$
		\item there exists a simple quotient $q_M: M \to \tM$ such that $Ker(\mus)=Ker(Tq_M)$
		\item $Ker(\mu)\subset Ker(\eta)$ and $Ker(Tq_M)\subset Ker(\eta^\sharp)$
		\item  there exists a representation $\Delta$ of $M\times_{\F(q_M)} M$ on $E/Ker(\mu)$ (see Remark \ref{rmk:integrablebna}) such that
		$$  i_{v} d\mu(e) = \mu\left(\underset{=:\bna_v\bar e|_{x(0)}}{ \underbrace{\frac{d}{d\epsilon}|_{\epsilon=0} \Delta_{x(0),x(\epsilon)} \bar e( x(\epsilon))}} \right) \text{ where $v=\frac{d}{d\epsilon}|_{\epsilon=0}x(\epsilon)$}$$
		for all $e\in \Gamma E$ and $v\in Ker(Tq_M)$, where $\bar e := e \ mod \ Ker(\mu)$ is the induced section of $E/Ker(\mu)$.
		\item if $\bna_v \bar e =0$ then $i_v d\eta(e) =0$.
	\end{enumerate}
\end{corollary}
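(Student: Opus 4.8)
The plan is to deduce this condensed list directly from Proposition \ref{prop:qomred} together with Lemma \ref{lem:keromlin}, by checking that items (1.)--(5.) here are jointly equivalent to the full set of hypotheses in those statements. Recall that by Proposition \ref{prop:qomred}, $\om$ is kernel-reducible if and only if it is infinitesimally kernel-reducible---that is, conditions (1.)--(7.) of Lemma \ref{lem:keromlin} hold---the foliation $\F_M$ integrating $Ker(\mus)\subset TM$ is simple, and the flat partial connection $\bna$ integrates to a representation $\Delta$ of $M\times_{\F_M} M$ on $E/Ker(\mu)$. So the whole task is to repackage these nine conditions as the five listed here, discarding the redundant ones.

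For the forward implication I would simply read off each item from the Lemma/Proposition data. Item (1.) is Lemma \ref{lem:keromlin}(1.). Item (2.) compresses three facts: constant rank of $\mus$ (Lemma \ref{lem:keromlin}(2.)), so that $Ker(\mus)$ is a subbundle; its involutivity (Lemma \ref{lem:keromlin}(5.)); and simplicity of the integrating foliation (Proposition \ref{prop:qomred}), which together yield a simple quotient $q_M:M\to\tM$ with $Ker(Tq_M)=Ker(\mus)$. Item (3.) is Lemma \ref{lem:keromlin}(4.) rewritten using $Ker(Tq_M)=Ker(\mus)$. Item (4.) records both that $\bna$ integrates to $\Delta$ (Proposition \ref{prop:qomred}) and the defining relation $\mu(\bna_v\bar e)=i_vd\mu(e)$ of Lemma \ref{lem:keromlin}(6.), the flatness being absorbed into $\Delta$ being a genuine representation. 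Item (5.) is Lemma \ref{lem:keromlin}(7.).

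The converse is where the non-redundancy must be justified, and it is the main (though mild) obstacle. Assuming (1.)--(5.), constant rank and involutivity of $Ker(\mus)$ and simplicity of $\F_M$ all follow from (2.), since $Ker(\mus)=Ker(Tq_M)$ for a submersion forces $Ker(\mus)$ to be a constant-rank involutive distribution with simple leaf space; flatness of $\bna$ and its integrability to $\Delta$ follow from (4.); Lemma \ref{lem:keromlin}(4.) and (7.) are items (3.) and (5.). The only Lemma condition not literally present is (3.), namely $i_v(d\mu(e)+\eta(e))\in\Gamma(Im(\mu))$ for $v\in\Gamma(Ker(\mus))$, and this is the one computation to perform: for such $v$, item (3.) gives $Ker(Tq_M)\subset Ker(\eta^\sharp)$ hence $i_v\eta(e)=0$, while item (4.) gives $i_vd\mu(e)=\mu(\bna_v\bar e)\in Im(\mu)$, so that $i_v(d\mu(e)+\eta(e))=\mu(\bna_v\bar e)\in Im(\mu)$, which is exactly Lemma \ref{lem:keromlin}(3.). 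With all hypotheses of Proposition \ref{prop:qomred} recovered, the corollary follows.
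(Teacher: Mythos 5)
Your proposal is correct and matches the paper's (implicit) intended argument: the paper states this corollary without proof, as a non-redundant repackaging of Lemma \ref{lem:keromlin} together with Proposition \ref{prop:qomred}, which is precisely the bookkeeping you carry out. Your one substantive verification---recovering Lemma \ref{lem:keromlin}(3.) from items (3.) and (4.) via $i_v\eta(e)=0$ and $i_vd\mu(e)=\mu(\bna_v\bar e)\in Im(\mu)$---is exactly the redundancy the paper's phrase ``non-redundant list'' silently relies on, and you identify and discharge it correctly.
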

Notice that, when they exist, both $q_M$ and $\Delta$ are fully determined by $\om$, hence they are unique.

We now consider a Lie algebroid structure $A$ on $E \to M$ and study when it is basic for the kernel of $\om$. 
\begin{theorem}\label{thm:omActrk}
	Let $A$ be a Lie algebroid structure on $E\to M$ and $\om$ a linear $k$-form on $E\to M$, $k\geq 2$. If $\om$ is $A$-morphic and $\om$ is kernel-reducible with underlying simple quotient $q=q_{Ker(\om)}$, then $A$ is $q$-basic. In particular, $q_*\om$ defines a $q_*A$-morphic $k$-form with trivial kernel: denoting $(\tmu,\teta)$ the components of $q_*\om$, defined by eqs. \eqref{eq:tildemuetadef} in terms of the components of $\om$, then
	$$ Ker(\tmu)=0, \ Ker(\tmu^\sharp)=0 \text{ and $(\tmu,\teta)$ satisfy the "IM" equations \eqref{eq:IMeqs} for $q_*A$.}$$
\end{theorem}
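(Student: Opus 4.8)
The plan is to reduce the entire statement to the single assertion that $A$ is $q$-basic for $q=q_{Ker(\om)}$. Once that is known, Theorem \ref{thm:Aandomqbasic} applies directly: $\om$ is $A$-morphic by hypothesis and $q$-basic because it is kernel-reducible, so $q_*\om$ is $(q_*A)$-morphic with components $(\tmu,\teta)$ given by \eqref{eq:tildemuetadef}, hence satisfying the IM equations \eqref{eq:IMeqs}. The triviality of the kernel is then read off from Proposition \ref{prop:qomred} (which guarantees $q_*\om$ has trivial kernel) by applying the pointwise decomposition \eqref{eq:Keromzero} to the linear form $q_*\om$ at the zero section, forcing $Ker(\tmu)=0$ and $Ker(\tmu^\sharp)=0$. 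Throughout I will use that, since $\om$ is kernel-reducible, it is infinitesimally kernel-reducible (Proposition \ref{prop:qomred}), so all items of Lemma \ref{lem:keromlin} hold and the components of $q$ are $(q_M,K=Ker(\mu),\Delta)$ with $Ker(Tq_M)=Ker(\mus)=:D$ and $\Delta$ integrating the flat connection $\bna$. The work is thus to verify the three conditions of Proposition \ref{prop:Abasic}.

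First I would dispatch the two short steps. For the anchor being $(q,q_M^\nu)$-basic I must check the items of Lemma \ref{lma:basicmorphvb}; item (2) is $\rho(K)\subseteq D$, which I get by setting the second slot equal to $k\in\Gamma(K)$ in the first IM equation: since $\mu(k)=0$ we obtain $i_{\rho(k)}\mu(e)=-i_{\rho(e)}\mu(k)=0$ for all $e$, i.e. $\mus(\rho(k))=0$. For condition (2) of Proposition \ref{prop:Abasic}, take $k\in\Gamma(K)$ and $e$ a $q$-invariant section; the second IM equation together with $\mu(k)=0$ and $\eta(k)=0$ (the latter from $Ker(\mu)\subseteq Ker(\eta)$) reduces $\mu([k,e])$ to $L_{\rho(k)}\mu(e)$, and by Cartan's formula with $i_{\rho(k)}\mu(e)=0$ this equals $i_{\rho(k)}d\mu(e)=\mu(\bna_{\rho(k)}\bar e)$, which vanishes because $\rho(k)\in D$ and $e$ is invariant. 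Hence $[k,e]\in\Gamma(Ker\mu)=\Gamma(K)$.

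The main obstacle is item (3) of Lemma \ref{lma:basicmorphvb} for the anchor, namely that $\bar\rho\colon E/K\to TM/D$ intertwines $\bna$ with the flat Bott connection $\bna^\nu$ of $q_M^\nu$, together with condition (3) of Proposition \ref{prop:Abasic}. Since the intertwining is tensorial in the $E/K$-argument, it suffices (using the local invariant basis of Remark \ref{rmk:localinvarbasis}) to treat a $q$-invariant $e$, where it amounts to showing $\rho(e)$ is $q_M$-projectable, i.e. $[v,\rho(e)]\in D$ for all $v\in\Gamma(D)$. I expect the naive route through the first IM equation to loop back on itself, producing only the antisymmetric identity $i_{[v,\rho(e)]}\mu(e')=-i_{[v,\rho(e')]}\mu(e)$; the resolution I would use instead is to contract the \emph{second} IM equation with $v$. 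Because $v\in Ker(\mus)$ one has $i_v\mu(\,\cdot\,)\equiv 0$, so the left-hand side $i_v\mu([e,e'])$ vanishes identically, while on the right-hand side the $d\mu$-term dies by invariance of $e$ and the $\eta$-term by $Ker(\mus)\subseteq Ker(\eta^\sharp)$, leaving exactly $i_{[v,\rho(e)]}\mu(e')=i_v\mu([e,e'])=0$ for every $e'$. This gives $[v,\rho(e)]\in Ker(\mus)=D$, establishing the intertwining.

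Finally, with projectability of $\rho(e)$ in hand, condition (3) of Proposition \ref{prop:Abasic} should follow by differentiating the second IM equation and contracting with $v\in\Gamma(D)$: every resulting term vanishes — the $d\mu$-terms by invariance of $e_1,e_2$ and by $[v,\rho(e_j)]\in D$, the $\eta$-terms by $Ker(\mus)\subseteq Ker(\eta^\sharp)$ together with $[v,\rho(e_j)]\in D$, and the surviving $d\eta$-term by condition (7) of Lemma \ref{lem:keromlin}, which gives $i_vd\eta(e_j)=0$ whenever $\bna_v\bar e_j=0$. This yields $i_vd\mu([e_1,e_2])=0$, i.e. $\bna_v\overline{[e_1,e_2]}=0$, so $[e_1,e_2]$ is $q$-invariant. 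With all three conditions of Proposition \ref{prop:Abasic} verified, $A$ is $q$-basic and the theorem follows as in the first paragraph.
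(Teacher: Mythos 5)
Your proposal is correct, and it follows the paper's overall skeleton (verify the three conditions of Proposition \ref{prop:Abasic}, then invoke Theorem \ref{thm:Aandomqbasic} and the general trivial-kernel fact to get $Ker(\tmu)=0$, $Ker(\tmu^\sharp)=0$), with steps (1a) and condition (2) essentially identical to the paper's. Where you genuinely diverge is on the two hard steps, and your route is cleaner. For (1b) --- projectability of $\rho(e)$ for $q$-invariant $e$ --- the paper works in a foliated chart $U=F\times \tU$, expands $Tq_M\rho(e_1)=r^i\tilde v_i$ and $[e_1,e_2]=c^a e_a$ in an invariant basis, isolates the $\Omega^1(F)$-component $d_F r^i\wedge q_M^* i_{v_i}\tmu(\te_2)$, and crucially invokes the non-degeneracy $Ker(\tmu^\sharp)=0$ of the \emph{quotient} to conclude $d_F r^i=0$. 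You instead contract the second IM equation with $v\in\Gamma(Ker(\mus))$ and use $[L_{\rho(e)},i_v]=i_{[\rho(e),v]}$: the left side vanishes since $v\in Ker(\mus)$, the $d\mu$-term dies because $i_v d\mu(e)=\mu(\bna_v\bar e)=0$ by invariance, the $\eta$-term because $Ker(\mus)\subseteq Ker(\eta^\sharp)$, leaving $i_{[v,\rho(e)]}\mu(e')=0$ for all $e'$, i.e.\ $[v,\rho(e)]\in Ker(\mus)$. This is coordinate-free, never uses the quotient's non-degeneracy, and in fact only needs invariance of $e$ itself --- so it isolates exactly which hypotheses drive the conclusion, something the paper's argument obscures. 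Similarly for condition (3): the paper shows $\mu([e_1,e_2])$ is locally of the form $q_M^*\Omega(\tU)$ using (1a), (1b) and the basicness of $\mu(e_j)$, $\eta(e_1)$, whereas you differentiate the second IM equation and verify the flatness $i_v d\mu([e_1,e_2])=0$ term by term, using Lemma \ref{lem:keromlin}(7.) to kill the surviving $i_{X_2}i_v d\eta(e_1)$ contribution (the paper's chart route avoids item (7) here by using instead that $\eta(e_1)$ is $q_M$-basic); both are legitimate, and I checked each of your vanishing claims, including the sign bookkeeping in $i_v i_X=-i_X i_v$ and the use of $[v,\rho(e_j)]\in\Gamma(D)$ established in your step (1b). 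Your reduction of the intertwining condition of Lemma \ref{lma:basicmorphvb}(3) to invariant sections via tensoriality of $\bar\rho\circ\bna-\bna^\nu\circ\bar\rho$ together with Remark \ref{rmk:localinvarbasis} is also sound. Net comparison: the paper's proof buys nothing extra here beyond what your argument gives; yours is shorter, global, and shows that $A$ being $q$-basic needs only the infinitesimal kernel-reducibility conditions of Lemma \ref{lem:keromlin}, not the triviality of the quotient kernel.
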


\begin{proof}
	We need to verify that $A$ and $q$ satisfy conditions (1.-3.) in Proposition \ref{prop:Abasic}, saying $A$ is $q$-basic, as a consequence of $\om$ being $A$-morphic. We recall that $\om$ being kernel-reducible implies that the quotient data $(q_M,K,\Delta)$ underlying $q=q_{Ker(\om)}$ satisfies 
	$$Ker(Tq_M)=Ker(\mu^\sharp), \ K=Ker(\mu), \ \Delta \text{ integrates } \bna,$$
	where $\bna$ is defined in Lemma \ref{lem:keromlin}. Moreover, from Proposition \ref{prop:qomred}, we know that $\om$ is $q$-basic and we denote $(\tmu,\teta)$ the components of $q_*\om$.
	The fact that $q_*\om$ has trivial kernel is general (see Section \ref{subsec:general}), from which we obtain $Ker(\tmu)=0$ and $Ker(\tmu^\sharp)=0$.
	 
	The first point (1.) in order to show $A$ is $q$-basic is equivalent to:
	$$ \text{(1a:) } \rho(K)\subset Ker(Tq_M) \text{ and (1b:) } \rho(e) \text{ is $q_M^\nu$-invariant for each $q$-invariant $e\in \Gamma(E)$}.$$
	(Recall the definition of $q_M^\nu$ from Example \ref{ex:normalquot}.)
	For $k\in K_x, \ e\in E_x$, then 
	$$\mu^\sharp(\rho(k))(e)=i_{\rho(k)}\mu(e) \overset{\eqref{eq:IMeqs}}{=} -i_{\rho(e)}\mu(k)=0.$$
	This shows (1a.): $\rho(K)\subset Ker(\mus)=Ker(Tq_M)$. 
	
	Item (1b.) requires more refined examination of the underlying geometry.
	Around each point in $M$, we can take a foliated chart $U=F\times \tU$ so that $q_M$ is the projection onto $\tU$. In this chart, for a $q$-invariant section $e_1$, we write
	\begin{equation}\label{eq:rirho}Tq_M\rho(e_1|_x) = r^i(x) \tilde v_i, \text{ for $v_i \in \mathfrak{X}(\tU)$ a local basis and $r^i\in C^\infty(U)$.}\end{equation}
	With this notation, (1b.) is equivalent to the functions $r^i$ being independent of the $F$-coordinates: $d_Fr^i=0 \forall i$.
	We now characterize $q$-invariant sections further. Denoting $(q,q_M): (E\to M) \to (\tilde E\to \tM)$ the simple quotient defined by the kernel of $\om$, we choose a bundle isomorphism
	$$ E \simeq K \oplus_M q_M^*\tilde E,$$
	under which $q$-invariant sections must be of the form 
	$$e=k+q_M^*\te \text{ for $k\in \Gamma_M(K)$ and $\te = q_*e \in \Gamma_{\tM}(\tE)$.}$$
	For such a $q$-invariant section $e$, since $\om$ is kernel-reducible, 
	$$\mu(e) = q_M^*\tmu(\te), \ \eta(e)=q_M^*\teta(\te).$$
	
	We now take $e_1,e_2$ $q$-invariant sections, and use the second IM equation in \eqref{eq:IMeqs} for the components $(\mu,\eta)$ of an $A$-morphic form, to get
	\begin{equation} \label{eq:muonbrakinvar} \mu([e_1,e_2]) = d i_{\rho(q^*_M\te_1)} q_M^*\tmu(\te_2) + Z, \text{ where } Z \in C^\infty(M)\otimes q_M^*\Omega(\tM),\end{equation}
	and we have used $\rho(K)\subset Ker(\mus)\subset Ker(\eta^\sharp)$. We can write
	\begin{equation} \label{eq:qinvarbrack} [e_1,e_2]|_x = c^a(x) e_a|_x, \text{ for $e_a$ being invariant sections.}\end{equation}
	(Every section is locally a $C^\infty(M)$-linear combination of $q$-invariant sections, see Remark \ref{rmk:localinvarbasis}.) Restricting to a foliated chart $U\subset M$, this expression and eq. \eqref{eq:rirho} imply
	$$ d_Fr^i \wedge q_M^*i_{v^i}\tmu(\te_2) + Z'=0, \text{ where $Z'\in C^\infty(U)\otimes q_M^*\Omega(\tU)$} .$$
	The first term above is the only one with a factor on $\Omega^1(F)$, so it should vanish independently. We also know that the above equation holds for every $q$-invariant section $e_2$ and that $Ker(\tmu^\sharp)=0$. It follows (e.g: using local coordinates on $F$) that $d_Fr^i=0 \ \forall i$, thus proving (1b.). 
	
	The item (2.) in Proposition \ref{prop:Abasic} reads:
	$$ \text{if $e$ is $q$-invariant and $k\in \Gamma(Ker(\mu))$ then $\mu([k,e])=0$. } $$
	Using the second IM equation in \eqref{eq:IMeqs}, we compute
	$$ \mu([k,e])= L_{\rho(k)}q_M^*\tmu(\te) + 0,$$
	where we have used $Ker(\mu)\subset Ker(\eta)$. Now, the r.h.s. is zero since $\rho(k) \in \Gamma(Ker(Tq_M))$ by (1a.), thus proving (2.).
	
	Finally, item (3.) in Proposition \ref{prop:Abasic} reads:
	$$ \text{if $e_1,e_2$ are $q$-invariant, then $[e_1,e_2]$ is $q$-invariant. } $$
	Using: a foliated chart $U=F\times \tU$ as above, the second IM equation in \eqref{eq:IMeqs} once more and items (1a.,1b.) proven above, we get
	$$ \mu([e_1,e_2]) \in q_M^*\Omega(\tU),$$
	which shows that $[e_1,e_2]$ is $q$-basic, as wanted. The fact that $A$ is $q$-basic is thus proven.
	
	Now that we know that $A$ is $q$-basic, since $\om$ is also $q$-basic by hypothesis, Proposition \ref{thm:Aandomqbasic} implies that $q_*\om$ is morphic for $q_*A$. 
\end{proof}

\section{Quotients of multiplicative forms on Lie groupoids and their Lie theory}\label{sec:mult}

We state some conventions to be used through this section. For a Lie groupoid $G\rra M$ we denote by $s$ its source map, $\t$ its target map, $\inv(g)=g^{-1}$ the inversion map and $1:M\to G$ the unit map. The multiplication map is denoted $m(g,h)=gh$ and defined when $s(g)=\t(h)$.

We denote $A=Lie(G)$ the associated Lie algebroid structure on the vector bundle

$$E_G:=Ker(Ts|_{1(M)})\to M,$$ where we identify sections of $E_G$ with right invariant vector fields on $G$. The anchor map of $Lie(G)$ is given by $\rho(e)=T\t(e)$ for $e\in E_G$.

Given a Lie groupoid morphism $$(F,f): (G\rra M) \to (\tilde G \rra \tM)$$
we denote the associated vector bundle morphism as
$$ (Lie(F),f): (E_G \to M) \to (E_{\tilde G} \to \tM), \ Lie(F)(e) = TF(e), \ e \in Ker(Ts|_{1(M)}) \subset TG|_{1(M)},$$
which defines a Lie algebroid morphism $Lie(G)\to Lie(\tilde G)$.

\subsubsection*{A preliminary technical result}
Before beginning our study of quotients of multiplicative forms, we briefly describe a technical result that will be used in the main results.

\begin{lemma}\label{lma:factsgds}
	Let $G\rra M$ be a Lie groupoid and $F: (G\rra M) \to (\R \rra \star)$ be a groupoid $1$-cocycle seen as a morphism onto the Lie group $\R$ over a point $\star$. Denote  $\xi=Lie(F): (E_G \to M) \to (\R\to \star)$ the induced algebroid $1$-cocycle.
	\begin{enumerate}
		\item for any smooth curve $g:[0,1]\to G$ such that $s(g(t))$ is constant then,
		$$ F(g(t))=F(g(0))+\int_0^t \xi(TR_{g^{-1}(s)}\frac{d}{dt}g(s)) \ ds.$$
		\item if $(R\rra N) \subseteq (G\rra M)$ a Lie subgroupoid with connected source fibers and $Lie(F)|_{E_R}=0$ then $F|_R =0$.
	\end{enumerate}
\end{lemma}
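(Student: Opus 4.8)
The plan is to prove part (1) by differentiating $t\mapsto F(g(t))$ using the cocycle property, and then to obtain part (2) by integrating this derivative along a path contained in a single source fiber of $R$. For part (1), fix $t$ and, for small $\epsilon$, form the element $g(t+\epsilon)\,g(t)^{-1}$. This product is defined precisely because $s(g(t+\epsilon))=s(g(t))$ is constant and equals $\t(g(t)^{-1})$; writing $g(t+\epsilon)=[g(t+\epsilon)\,g(t)^{-1}]\,g(t)$ and using the cocycle identity $F(gh)=F(g)+F(h)$ gives $F(g(t+\epsilon))-F(g(t))=F\big(g(t+\epsilon)\,g(t)^{-1}\big)$. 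The curve $\epsilon\mapsto g(t+\epsilon)\,g(t)^{-1}=R_{g(t)^{-1}}(g(t+\epsilon))$ passes through the unit $1_{\t(g(t))}$ at $\epsilon=0$ with velocity $TR_{g(t)^{-1}}\dot g(t)$. I would check that this velocity lies in $E_G=\Ker(Ts|_{1(M)})$: since $s(g(t))$ is constant, $Ts(\dot g(t))=0$, and right translation by $g(t)^{-1}$ carries the source fiber through $g(t)$ onto the source fiber through $1_{\t(g(t))}$, whose tangent at the unit is $E_G|_{\t(g(t))}$. Differentiating at $\epsilon=0$, together with $dF|_{1(M)}=\xi$ on $E_G$, then yields $\tfrac{d}{dt}F(g(t))=\xi\big(TR_{g(t)^{-1}}\dot g(t)\big)$, and integrating from $0$ to $t$ produces the claimed formula.

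For part (2), I would first note that $F$ vanishes on units: applying the cocycle identity to $1_x=1_x 1_x$ forces $F(1_x)=0$. Now fix $r\in R$ and set $x:=s(r)\in N$. Since $R$ has connected source fibers, the source fiber of $R$ over $x$ is smoothly path-connected, so there is a smooth path $g:[0,1]\to s^{-1}(x)\cap R$ with $g(0)=1_x$ and $g(1)=r$; along it $s(g(t))\equiv x$, so part (1) applies. The decisive point is that each velocity $TR_{g(t)^{-1}}\dot g(t)$ in fact lands in $E_R$: because $R$ is a subgroupoid, $g(t)^{-1}\in R$ and right translation by it preserves $R$ and its source fibers, so this velocity is tangent to the source fiber of $R$ at $1_x$, i.e. lies in $E_R|_x$. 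As $\xi|_{E_R}=Lie(F)|_{E_R}=0$ by hypothesis, the integrand vanishes identically and part (1) gives $F(r)=F(1_x)+0=0$. Since $r\in R$ was arbitrary, $F|_R=0$.

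The only genuinely delicate step --- and the one I would carry out most carefully --- is the bookkeeping of tangent spaces: verifying that $TR_{g(t)^{-1}}\dot g(t)$ belongs to $E_G$ (respectively $E_R$), so that $\xi$ may be evaluated on it. This rests on the facts that the curve remains in one source fiber and that right translation intertwines source fibers, together with the compatibility of the subgroupoid's multiplication and source map with those of $G$.
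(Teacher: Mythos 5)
Your proposal is correct and follows essentially the same route as the paper: part (1) via the cocycle identity $F(g(t+\epsilon))=F\big(g(t+\epsilon)g(t)^{-1}\big)+F(g(t))$, differentiation at $\epsilon=0$ and the fundamental theorem of calculus, and part (2) by applying (1) to a smooth path in a source fiber of $R$ from a unit to $r$, noting that the right-translated velocities lie in $E_R$ so the integrand vanishes. Your additional bookkeeping (that $TR_{g(t)^{-1}}\dot g(t)\in E_G$, respectively $E_R$, and that $F$ vanishes on units) only makes explicit what the paper's proof uses implicitly.
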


\begin{proof}
	For (1.), we note that $F(g(t+\epsilon))=F(g(t+\epsilon)g(t)^{-1}g(t))=F(g(t+\epsilon)g(t)^{-1}) + F(g(t))$. Taking $d/d\epsilon$, using the definition of $\xi=Lie(F)$ and the fundamental theorem of calculus for $t\mapsto F(g(t))$, we obtain the desired formula.
	For (2.), let $r\in R$. Since $R$ is source-connected, there exists a smooth path $r:[0,1]\to R$ such that $r(1)=r$, $r(0)=1(s(r))$ and $s(r(t))$ is constant. Notice that, since $R\subseteq G$ is a Lie subgroupoid, $TR_{r(t)^{-1}}\frac{d}{dt}r (t) \in (E_R\to N)\subset (E_G\to M)$ for all $t$. Using item (1.), we obtain
	$$ F(r(1))= F(r(0)) + \int_0^1 \xi(TR_{r(t)^{-1}}\frac{d}{dt}r (t)) \ dt = 0,$$
	where the first term vanishes for being a $1$-cocycle evaluated at a unit and the second by the hypothesis $Lie(F)|_{Lie(R)}=0$.
\end{proof}

\subsection{Simple quotients of Lie groupoids}\label{subsec:simpleG}

We begin with the definition of a simple quotient which takes into account the Lie groupoid structure.
\begin{definition}\label{def:simpquotG}
	A {\bf simple quotient of a Lie groupoid} $G\rra M$ is a Lie groupoid morphism
	$$ (\q,q_M): (G\rra M) \to (\tilde G\rra \tilde M)$$
	such that:
	\begin{enumerate}
		\item $q_M$ is a surjective submersion, 
		\item the map
			$$G \to \tG\ _{\tilde s}\times_{q_M} M=\{(\tilde g, x)\in \tG\times M:\tilde s(\tilde g)=q_M(x) \}, \ g\mapsto (\q(g),s(q))$$
			is a surjective submersion,
		\item both the $q_M$-fibers and the $\q$-fibers are connected.
	\end{enumerate}
\end{definition}
Conditions \emph{1.} and \emph{2.} above say that $\q:G\to \tilde{G}$ is a {\bf Lie groupoid fibration} as in \cite[Def. 2.4.3]{mackenziebook}. Condition \emph{3.} is added in order to characterize basic differential forms on $G$ in terms of infinitesimal data, as in Section \ref{subsec:general}. Notice that, since $q_M$ is a surjective submersion, then the projection $\tG\ _{\tilde s}\times_{q_M} M \to \tG$ is a surjective submersion. Thus, $\q$ is also a surjective submersion. Also notice that condition (2.) is stronger than requiring $\q$ to be a surjective submersion and that it generalizes the "fiberwise surjective" condition appearing for simple quotients of vector bundles (see Section \ref{subsec:simpleqvec}).
We sometimes denote the quotient Lie groupoid by
$$ \tilde G = \q_* G.$$

\begin{remark} Just as a simple quotient of a vector bundle is completely determined by its components, a simple quotient $\q$ of a Lie groupoid $G\rra M$ is also characterized by components $(q_M,\K,\theta)$ which define a \emph{normal subgroupoid system} as introduced in \cite[Def. 2.4.7]{mackenziebook}. Nevertheless, these components will not play a role in the paper so we will not go into the details of this aspect of simple quotients of Lie groupoids.
\end{remark}

\medskip

{\bf Lie theory of simple quotients.}
Given a simple quotient $\q$ of $G\rra M$, then the induced vector bundle map $Lie(\q):E_G \to E_{\tilde G}$ defines a simple quotient of $(E_G\to M)$, the Lie algebroid structure $Lie(G)$ on $E_G$ is $Lie(\q)$-basic and $Lie(\q)_*Lie(G) = Lie(\tilde G)$ (see \cite[Thm. 4.4.7]{mackenziebook}).
Conversely, if $q$ defines a simple quotient of a Lie algebroid $A$, we say that $q$ is {\bf integrable to a simple quotient} if there exist a Lie groupoid $G\rra M$ and a simple quotient $\q$ of $G\rra M$ such that 
$$ Lie(G)\simeq A, \ Lie(\q)\simeq q.$$
In this case, we say that $(G\rra M,\q)$ yields a {\bf simple quotient integration} of $(Lie(G),q_A)$.

%
Given a simple quotient $q$ of a Lie algebroid $A$ on $E\to M$, the problem of finding an integration $G\rra M$ of $A$ and a simple quotient $\q$ of $G\rra M$ such that $Lie(\q)=q$ is non-trivial in general. As far as the authors know, it has to be treated case-by-case. 



In the rest of the paper, we will consider instances in which the integration $\q$ of $q$ can be found, and leave the underlying integrability problem for separate investigations.

\medskip

In the special case of Lie algebroid simple quotients of pullback type, there exists a general criterion which we summarize in the following.
\begin{example}[Integrating simple quotients of pullback type]\label{ex:intqpullbacktype}

Consider a simple quotient $q:A\to \tA$ of a Lie algebroid $A$ which is of pullback type, as in Remark \ref{rmks:pullbackq}, so that $A\simeq q_M^!\tA$ is isomorphic to the pullback algebroid. Some results in \cite{Alv} can be used to provide a criterion for $q$ to admit an integration to a simple quotient of a Lie groupoid, as follows. Recall that, since $q$ is of pullback type, there is a Lie algebroid injection $I_A\equiv (\rho|_K)^{-1} : Ker(Tq_M)\to A$ over $id_M$, where the involutive distribution $Ker(Tq_M)\subset TM$ is endowed with its natural algebroid structure. Assume that there exists a Lie groupoid morphism
	$$ I: (M \times_{\F(q_M)} M\rra M) \to (G\rra M), \text{ such that $Lie(I)= I_A$}$$
	(in particular, $Lie(G)= A \simeq q_M^!\tA$). There is an induced action of the direct product $(M \times_{\F(q_M)} M\rra M) \times (M \times_{\F(q_M)} M\rra M)$ on $G$ given by
	$$ (z_1,z_2)\cdot g = I(z_1)gI(z_2), \ z_1,z_2 \in M \times_{\F(q_M)} M, \ g\in G$$
	where the obvious composability conditions are implicit. Denote $\F\subset G$ the foliation given by the orbits of this, and
	$$ \q: G \to G/\F$$
	the quotient to the orbit space. Then, by \cite[Thm. 3.2 and its proof]{Alv}, there is a unique Lie groupoid structure on $G/\F$ making $\q$ a simple quotient of $G\rra M$. Note that the $\q$-fibers are connected since the $q_M$-fibers are. Moreover, $Lie(G/\F)\simeq \tA$ and  $Lie(\q)\simeq q_A$. 
\end{example}

\begin{remark}[Infinitesimal ideal system underlying $\q$]
	Given a simple quotient $\q$ of $G\rra M$, the simple foliation $\F(\q)$ given by the $\q$-fibers is multiplicative, i.e. $T\F(\q)\subseteq TG$ is a Lie subgroupoid of the tangent groupoid. For any regular multiplicative distribution $\F$ on $G$, \cite{JO} shows how to associate an \emph{infinitesimal ideal system}  $(\F_M,K,\bna)$ on the Lie algebroid $A=Lie(G)$, where $\F_M$ is a regular foliation of $M$. For the foliation $\F(\q)$ defined by the simple quotient $\q$ above, it is easy to verify that the corresponding infinitesimal ideal system has $\F_M=\F(q_M)$, $K=Ker(T\q)\cap E_G$ and $\bna$ given by the differentiation (see \ref{rmk:integrablebna}) of the component $\Delta$ underlying the simple quotient $Lie(\q)$ of $E_G$. When $G$ is source-simply-connected, the components $(q_M,K,\bna)$ underlying $\q$ determine the foliation $\F(q)$ uniquely (see \cite[Cor. 4.10]{JO}).
\end{remark}

\begin{remark}[Quotient by a multiplicative foliation]
	If $\F$ is a multiplicative foliation of $G\rra M$, as in \cite{J,JO}, then the set-theoretic quotient $\q:G \to G/\F$ naturally induces on $G/\F$ source, target, units and inversion map so that $\q$ commutes with the corresponding maps on $G$. Nevertheless, there might be an obstruction to finding a compatible multiplication map on $G/\F$ so that $\q$ defines a groupoid morphism, see \cite{J}. Thus, even when $\F$ is simple (as a foliation), there might be an obstruction for $G/\F$ to admit a Lie groupoid structure so that $\q$ is a morphism.
\end{remark}

\subsubsection*{Basic multiplicative forms on Lie groupoids}

A $k$-form $\gom\in \Omega^k(G)$ is called {\bf multiplicative } if
$$ m^*\gom = pr_1^*\gom + pr_2^*\gom,$$
where $pr_j(g_1,g_2)=g_j, \ j=1,2, \ (g_1,g_2)\in G^{(2)}.$ Following \cite{bursztyncabrera}, one observes that the multiplicativity of $\gom\in \Omega^k(G)$ is equivalent to the induced map
$$ \underline{\om}: T^{\oplus k}G:=TG\oplus_G \dots \oplus_G TG \to \R,$$
being a Lie groupoid morphism from the direct sum of the tangent Lie groupoid structures $TG\rra TM$ into the abelian group $\R\rra \star$.

We now verify that being multiplicative is preserved under simple quotients of $G$.
\begin{lemma}\label{lem:quotommult}
	Let $(\q,q_M)$ be a simple quotient of the Lie groupoid $G\rra M$ and $\om \in \Omega^k(G)$. If $\om$ is $\q$-basic, then $\om$ is multiplicative on $G$ iff $\q_*\om$ is multiplicative on $\q_*G$.
\end{lemma}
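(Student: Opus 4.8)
The plan is to reduce the equivalence to injectivity of pullback along a suitable surjective submersion. Since $\om$ is assumed $\q$-basic, we have $\om=\q^*\tom$ with $\tom=\q_*\om$. Because $\q$ is a Lie groupoid morphism over $q_M$, it intertwines all the structure maps; in particular $\tilde s\circ\q=q_M\circ s$, $\tilde\t\circ\q=q_M\circ\t$ and $\q(gh)=\q(g)\q(h)$. Consider the induced map on composable pairs
$$ \q^{(2)}:G^{(2)}\to \tG^{(2)},\quad (g,h)\mapsto(\q(g),\q(h)),$$
which is well defined since $s(g)=\t(h)$ forces $\tilde s(\q(g))=q_M(s(g))=q_M(\t(h))=\tilde\t(\q(h))$. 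The multiplicativity defects transform compatibly: from $\tilde m\circ\q^{(2)}=\q\circ m$ and $\tilde{pr}_j\circ\q^{(2)}=\q\circ pr_j$ together with $\om=\q^*\tom$ one computes
$$ m^*\om-pr_1^*\om-pr_2^*\om=(\q^{(2)})^*\bigl(\tilde m^*\tom-\tilde{pr}_1^*\tom-\tilde{pr}_2^*\tom\bigr),$$
using $(\q^{(2)})^*\tilde m^*\tom=(\tilde m\circ\q^{(2)})^*\tom=(\q\circ m)^*\tom=m^*\om$ and likewise $(\q^{(2)})^*\tilde{pr}_j^*\tom=pr_j^*\om$.

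Given this identity, the implication ``$\tom$ multiplicative $\Rightarrow$ $\om$ multiplicative'' is immediate, as the right-hand side vanishes once $\tom$ is multiplicative. For the converse I would show that $(\q^{(2)})^*$ is injective on forms, which holds as soon as $\q^{(2)}$ is a surjective submersion: then $m^*\om-pr_1^*\om-pr_2^*\om=0$ forces $\tilde m^*\tom-\tilde{pr}_1^*\tom-\tilde{pr}_2^*\tom=0$, i.e. $\tom$ is multiplicative.

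Thus the heart of the proof is to verify that $\q^{(2)}$ is a surjective submersion, and this is where the fibration conditions of Definition \ref{def:simpquotG} enter. Write $\Theta:G\to \tG\ _{\tilde s}\times_{q_M}M$, $g\mapsto(\q(g),s(g))$, for the surjective submersion of condition (2). Surjectivity of $\q^{(2)}$ follows by lifting: given $(\tilde g,\tilde h)\in\tG^{(2)}$, pick $h$ with $\q(h)=\tilde h$, note $\tilde s(\tilde g)=\tilde\t(\tilde h)=q_M(\t(h))$, and use surjectivity of $\Theta$ to find $g$ with $\q(g)=\tilde g$ and $s(g)=\t(h)$. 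For the submersion property I would build an explicit local section through any prescribed preimage: given $(g_0,h_0)$ over $(\tilde g_0,\tilde h_0)$, choose a local section $\sigma$ of $\q$ with $\sigma(\tilde h_0)=h_0$ and a local section $\tau$ of $\Theta$ with $\tau(\tilde g_0,\t(h_0))=g_0$, and set
$$ \Sigma(\tilde g,\tilde h)=\bigl(\tau(\tilde g,\t(\sigma(\tilde h))),\,\sigma(\tilde h)\bigr).$$
One checks that $(\tilde g,\t(\sigma(\tilde h)))$ lies in the domain of $\tau$ because $\tilde s(\tilde g)=\tilde\t(\tilde h)=q_M(\t(\sigma(\tilde h)))$, that $\Sigma$ lands in $G^{(2)}$, and that $\q^{(2)}\circ\Sigma=\mathrm{id}$ with $\Sigma(\tilde g_0,\tilde h_0)=(g_0,h_0)$. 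Admitting a local section through every point of its domain, $\q^{(2)}$ is a surjective submersion, and hence $(\q^{(2)})^*$ is injective.

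The only genuinely non-formal step is this last one: deducing that $\q^{(2)}$ is a surjective submersion from conditions (1)--(2). Everything else is an application of functoriality of pullback and the intertwining of structure maps. I would also remark that connectedness of fibers (condition (3) of Definition \ref{def:simpquotG}) plays no role here, precisely because $\om$ is already assumed $\q$-basic; connectedness is what one needs to \emph{deduce} $\q$-basicness from infinitesimal data, which is not at issue in this lemma.
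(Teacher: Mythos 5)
Your proof is correct and takes essentially the same route as the paper: express the multiplicativity defect of $\om$ on $G^{(2)}$ as the pullback under the induced map on composable pairs of the defect of $\tom$, then conclude by injectivity of pullback along a surjective submersion. The only difference is one of detail --- the paper compresses this into one line (writing the pullback loosely as $\q^*$ and invoking only that $\q$ is a surjective submersion), whereas you explicitly verify, via the fibration condition (2) of Definition \ref{def:simpquotG}, that $\q^{(2)}:G^{(2)}\to \tG^{(2)}$ is itself a surjective submersion; this fills in precisely the step the paper leaves implicit, and your remark that fiber-connectedness (condition (3)) is not needed here is also accurate.
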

\begin{proof}
	Denote $\tom=\q_*\om$ and $\tG=\q_*G$. Since $\q: G \to \tG$ is a surjective submersion, $\tom$ is multiplicative iff
	$$ \q^*(\tilde{m}^*\tom - \tilde{pr}_1^*\tom - \tilde{pr}_2^*\tom) = 0,$$
	with the obvious notations for the structure maps associated to $\tG\rra \tM$. Using that $\q$ is a Lie groupoid morphism and $\q^*\tom = \om$, the above is equivalent to $\om$ being multiplicative.
\end{proof}

\subsection{Lie theory for basic morphic forms}
Let $G\rra M$ be a Lie groupoid and $A=Lie(G)$ the associated Lie algebroid structure on $E_G=Ker(Ts|_{1(M)})\to M$.
We recall from \cite{bursztyncabrera} that the Lie functor induces an assignment
$$\Omega^k(G) \ni \om \mapsto Lie(\om) \in \Omega^k(E_G)$$
which takes multiplicative $k$-forms on $G\rra M$ to $A$-morphic linear $k$-forms on $E_G$. Denoting $(\mu,\eta)$ the components of $Lie(\om)$, we say that {\bf $(G,\om)$ integrates $(A,(\mu,\eta))$} and that $(\mu,\eta)$ are the {\bf infinitesimal components} of $\om$. 
Following \cite{bursztyncabrera}, the infinitesimal components $(\mu,\eta)$ can be defined directly in terms of $\om$ by the formulas:
\begin{eqnarray}
\mu(e)(v_1,\dots, v_{k-1}) &=& \om|_{1(x)} (e,T1(v_1),\dots, T1(v_{k-1})), \nonumber \\
\eta(e)(v_1,\dots, v_{k}) &=& d\om|_{1(x)} (e,T1(v_1),\dots, T1(v_{k}) ) \label{eq:muetfrommult},
\end{eqnarray}
where $e\in E_x$ and $v_1,\dots, v_k \in T_xM$, $x\in M$.

Our study below is guided by trying to produce integrations of infinitesimal components as a result of quotient procedures.

\begin{proposition}\label{prop:Liequotcomm}
	Let $\q$ be a simple quotient of a Lie groupoid $G\rra M$ and $\om$ a multiplicative $k$-form on $G\rra M$. If $\om$ is $\q$-basic, then $Lie(\om)$ is $Lie(\q)$-basic and 
	$$ Lie(\q_*\om)=[Lie(\q)]_*(Lie(\om))$$
	In particular, $(\q_* G, \q_* \om)$ integrates $(Lie(\q)_*A, (\tmu,\teta))$ where $(\tmu,\teta)$ are the components of $\tom_A$.
\end{proposition}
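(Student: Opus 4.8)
The plan is to derive both the ``basic'' claim and the commutation formula from a single naturality property: that the Lie functor on multiplicative forms commutes with pullback along Lie groupoid morphisms, $Lie(\q^*\tom)=[Lie(\q)]^*Lie(\tom)$. First I would produce the quotient form. Since $\om$ is $\q$-basic there is $\tom\in\Omega^k(\tG)$ with $\om=\q^*\tom$; since $\om$ is multiplicative and $(\q,q_M)$ is a simple quotient, Lemma \ref{lem:quotommult} shows $\tom=\q_*\om$ is multiplicative on $\tG=\q_*G$. Hence $Lie(\tom)$ is a well-defined linear $Lie(\tG)$-morphic $k$-form on $E_{\tG}$, with components $(\tmu,\teta)$, where $Lie(\tG)=[Lie(\q)]_*Lie(G)$ by \cite[Thm. 4.4.7]{mackenziebook}.

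The core step is the identity $Lie(\om)=[Lie(\q)]^*Lie(\tom)$ of linear $k$-forms on the total space $E_G$. I would argue this through the morphic maps. The relation $\om=\q^*\tom$ is exactly the statement $\uom=\underline{\tom}\circ\q^{(k)}$, where $\q^{(k)}\colon T^{\oplus k}G\to T^{\oplus k}\tG$ is the Lie groupoid morphism $(V_1,\dots,V_k)\mapsto(T\q\,V_1,\dots,T\q\,V_k)$. Applying the Lie functor, using its functoriality and its compatibility with the tangent and $T^{\oplus k}$ constructions, together with the defining property $\underline{Lie(\om)}=Lie(\uom)$ from \cite{bursztyncabrera}, yields $\underline{Lie(\om)}=\underline{Lie(\tom)}\circ(Lie(\q))^{(k)}$. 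Unwinding this gives $Lie(\om)(W_1,\dots,W_k)=Lie(\tom)(T Lie(\q)\,W_1,\dots,T Lie(\q)\,W_k)$, which is the desired pullback identity. Equivalently, one may verify it by a direct comparison of components through the formulas \eqref{eq:muetfrommult}: since $\q\circ 1=\tilde 1\circ q_M$ forces $T\q\circ T1=T\tilde 1\circ Tq_M$, while $T\q|_{E_G}=Lie(\q)$ and $d$ commutes with pullback, one obtains $\mu(e)(v_1,\dots,v_{k-1})=\tmu(Lie(\q)e)(Tq_M v_1,\dots,Tq_M v_{k-1})$ and the analogous formula for $\eta$, which is precisely $[Lie(\q)]^*Lie(\tom)$ read off by the component formulas on $E_G$.

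With this identity established the conclusions follow at once. Since $Lie(\q)\colon E_G\to E_{\tG}$ is a simple quotient of vector bundles and $Lie(\om)$ is the pullback of the form $Lie(\tom)$ on the quotient, $Lie(\om)$ is by definition $Lie(\q)$-basic, and uniqueness of the descended form gives $[Lie(\q)]_*Lie(\om)=Lie(\tom)=Lie(\q_*\om)$, which is the displayed formula. For the final assertion I combine $Lie(\q_*G)=[Lie(\q)]_*Lie(G)=[Lie(\q)]_*A$ with the fact that the components of $Lie(\q_*\om)$ are $(\tmu,\teta)$; by the definition of integration of infinitesimal components this says exactly that $(\q_*G,\q_*\om)$ integrates $([Lie(\q)]_*A,(\tmu,\teta))$, and Theorem \ref{thm:Aandomqbasic} further identifies $(\tmu,\teta)=[Lie(\q)]_*(\mu,\eta)$.

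I expect the only real difficulty to be bookkeeping rather than conceptual. One must keep straight the two evaluation conventions: the components of a multiplicative form are read at the unit section $1(x)\subset G$ via $T1$, whereas $Lie(\q)$-basicness concerns the linear form $Lie(\om)$ on the total space $E_G$ read at its zero section. The point that makes the two pictures agree is that, under $E_G=Ker(Ts|_{1(M)})$, the unit section of $G$ matches the zero section of $E_G$ and $T\q$ restricts there to $Lie(\q)$; once this is checked, the remaining manipulations are formal functoriality.
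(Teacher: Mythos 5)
Your proposal is correct, and it reaches the same key identity $Lie(\om)=[Lie(\q)]^*Lie(\tom)$ on which the paper's argument rests, but by a different mechanism. The paper's proof is a one-line computation with the tangent lift: it uses the characterization $Lie(\om)=j^*(\om_T)$ from \cite{bursztyncabrera}, the naturality $(\q^*\tom)_T=(T\q)^*(\tom_T)$, and the compatibility $T\q\circ j=\tilde j\circ Lie(\q)$, from which $Lie(\om)=Lie(\q)^*\tilde j^*(\tom_T)=Lie(\q)^*Lie(\tom)$ follows at once. Your primary route instead applies functoriality of the Lie functor to the factorization $\uom=\underline{\tom}\circ \q^{(k)}$; this is sound, but it silently leans on the identifications $Lie(T^{\oplus k}G)\simeq T^{\oplus k}A$ and $Lie(\q^{(k)})\simeq(Lie(\q))^{(k)}$, which involve the canonical involution and are indeed available from \cite{bursztyncabrera} (the paper itself uses them in the proof of Theorem \ref{thm:Aandomqbasic}), so it is essentially the same naturality principle packaged at the level of morphic maps rather than tangent lifts. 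Your fallback route --- comparing components via \eqref{eq:muetfrommult} using $\q\circ 1=\tilde 1\circ q_M$, $T\q|_{E_G}=Lie(\q)$ and $d\om=\q^*d\tom$ --- is the most elementary of the three; to make it complete you should state explicitly the (paper-stated) fact that a linear $k$-form is uniquely determined by its components $(\mu,\eta)$, so that matching the components of the two \emph{linear} forms $Lie(\om)$ and $[Lie(\q)]^*Lie(\tom)$ forces equality on all of $E_G$ and not merely along the zero section, where the component formulas are read off. The remaining steps --- invoking Lemma \ref{lem:quotommult} so that $Lie(\tom)$ is defined as a morphic form, the fact that $Lie(\q)$ is a simple quotient of vector bundles so that pullback-ness is literally the definition of basic, uniqueness of the descended form, and $Lie(\q_*G)=[Lie(\q)]_*Lie(G)$ from \cite[Thm. 4.4.7]{mackenziebook} --- match the paper's. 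One citation quibble: the identification of $(\tmu,\teta)$ with the quotient components via \eqref{eq:tildemuetadef} is the content of Proposition \ref{prop:linomqbasic}; Theorem \ref{thm:Aandomqbasic} adds that these components satisfy the IM equations, which is not needed here.
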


\begin{proof}
	Following \cite{bursztyncabrera}, the infinitesimal form $Lie(\om)\in \Omega^k(E_G)$ can be computed as
	$$ Lie(\om)= j^*(\om_T), \text{ where $\om_T \in \Omega^k(TG)$ is the \emph{tangent lift} of $\om$},$$
	and $j:E_G \hookrightarrow TG$ denotes the inclusion.
	
	Let us denote $\q:G\to \tilde G$ the quotient, $\tom = \q_* \om$ and $\tilde j: E_{\tilde G}=Lie(\q)_*E_G \hookrightarrow T\tilde G$ the inclusion. Then,
	$$ Lie(\om)= j^*(\om_T) = j^*((\q^*\tom)_T) = j^*((T\q)^*(\tom_T)) = Lie(\q)^*\tilde{j}^*(\tom_T) = Lie(\q)^*Lie(\tom),$$
	where we used: a general property of tangent lift $(f^*\om)_T = (Tf)^*(\om_T)$; the definition of $Lie(\q)$, $T\q\circ j = \tilde{j} \circ Lie(\q)$; and the previous characterization of $Lie$ of a multiplicative form. This proves the Proposition.
\end{proof}

We now address the problem of giving infinitesimal characterization for the hypothesis needed to obtain quotients.

\begin{proposition}\label{prop:omqbasicfrominfi}
Let $\q$ be a simple quotient of $G\rra M$ and $\om$ be a multiplicative form on $G$. If $G$ is source-connected and $Lie(\om)$ is $Lie(\q)$-basic, then $\om$ is $\q$-basic.
\end{proposition}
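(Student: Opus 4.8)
The plan is to reduce the claim to two horizontality statements and then to integrate each of them by means of the cocycle technique of Lemma~\ref{lma:factsgds}. Recall from Section~\ref{subsec:general} that, since the $\q$-fibers are connected, $\om$ is $\q$-basic iff it is $\q$-horizontal and infinitesimally $\q$-invariant; by Cartan's formula $L_V\om = d\,i_V\om + i_V d\om$, once $\om$ is $\q$-horizontal the invariance condition becomes horizontality of $d\om$. Hence $\om$ is $\q$-basic iff \emph{both} $\om$ and $d\om$ are $\q$-horizontal. On the infinitesimal side, $Lie(\q):E_G\to E_{\tG}$ is a simple quotient of vector bundles with connected $q_M$-fibers, so the hypothesis that $Lie(\om)$ is $Lie(\q)$-basic means that $Lie(\om)$ and $d\,Lie(\om)$ are both $Lie(\q)$-horizontal (basic forms are closed under $d$); and since the tangent lift commutes with $d$ one has $d\,Lie(\om)=Lie(d\om)$, as in the proof of Proposition~\ref{prop:Liequotcomm}. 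Thus it suffices to prove the following core statement and apply it to $\beta=\om$ and to $\beta=d\om$ (which is again multiplicative): \emph{if $\beta$ is a multiplicative form on the source-connected groupoid $G$ and $Lie(\beta)$ is $Lie(\q)$-horizontal, then $\beta$ is $\q$-horizontal.}

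To prove the core statement I would encode horizontality as the vanishing of a groupoid $1$-cocycle. Multiplicativity of $\beta$ says that $\underline{\beta}:T^{\oplus k}G\to \R$ is a morphism into $\R\rra\star$, i.e.\ a $1$-cocycle on the direct-sum tangent groupoid $T^{\oplus k}G\rra T^{\oplus k}M$. Since $\q$ is a submersion, $\Ker(T\q)\subset TG$ is a vector subbundle, and being the preimage of the zero-section subgroupoid $0(\tG)\subset T\tG$ under the groupoid morphism $T\q$ it is a Lie subgroupoid of $TG\rra TM$. Consequently
\[
 R:=\Ker(T\q)\oplus_G T^{\oplus (k-1)}G \ \subset\ T^{\oplus k}G
\]
(the tuples whose first leg is tangent to the $\q$-fibers) is a Lie subgroupoid, and by antisymmetry of $\underline\beta$ one has that $\beta$ is $\q$-horizontal iff $\underline{\beta}|_R\equiv 0$.

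The key technical point is that $R$ has connected source fibers. A source fiber of $R$ over $(w_1,\dots,w_k)\in T^{\oplus k}M$ (all $w_i\in T_xM$, $w_1\in\Ker(Tq_M)$) fibers over the $G$-source fiber $s^{-1}(x)$, the fiber over $g\in s^{-1}(x)$ being the product of the affine spaces $\{V_j\in T_gG: Ts(V_j)=w_j\}$ for $j\ge 2$ with $\{V_1\in \Ker(T_g\q): Ts(V_1)=w_1\}$. Each factor is convex, and the delicate one is the last, whose nonemptiness is exactly where the fibration condition (item~2 of Definition~\ref{def:simpquotG}) enters: surjectivity of $(\q,s):G\to \tG\,{}_{\tilde s}\!\times_{q_M}M$ gives surjectivity of $(T\q,Ts):T_gG\to \{(\tilde V,v): T\tilde s(\tilde V)=Tq_M(v)\}$, so the admissible pair $(0,w_1)$ has a preimage $V_1$ with $T\q(V_1)=0$ and $Ts(V_1)=w_1$. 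Hence each source fiber of $R$ is a bundle with convex (connected) fibers over $s^{-1}(x)$, which is connected since $G$ is source-connected; therefore $R$ is source-connected. I expect this connectedness argument, together with the correct use of the fibration condition to guarantee the constrained leg is nonempty, to be the main obstacle.

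It remains to identify the infinitesimal cocycle. Under the canonical identification $Lie(T^{\oplus k}G)\cong T^{\oplus k}E_G$ one has $Lie(\underline{\beta})=\underline{Lie(\beta)}$ (see \cite{bursztyncabrera}), while $Lie(R)=E_R$ consists of the infinitesimal tuples whose first leg lies in $\Ker(T\,Lie(\q))$, because $Lie(\Ker(T\q))=\Ker(Lie(T\q))=\Ker(T\,Lie(\q))$. Restricting along the inclusion $R\hookrightarrow T^{\oplus k}G$ gives $Lie(\underline{\beta}|_R)=\underline{Lie(\beta)}|_{E_R}$, and the right-hand side vanishes precisely because $Lie(\beta)$ is $Lie(\q)$-horizontal. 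Applying Lemma~\ref{lma:factsgds}(2) to the $1$-cocycle $\underline{\beta}|_R$ on the source-connected Lie subgroupoid $R$ yields $\underline{\beta}|_R\equiv 0$, i.e.\ $\beta$ is $\q$-horizontal. Using this for $\beta=\om$ and $\beta=d\om$ gives that both $\om$ and $d\om$ are $\q$-horizontal, whence $\om$ is $\q$-basic.
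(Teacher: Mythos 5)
Your proof is correct and follows essentially the same route as the paper: both reduce $\q$-basicness to the vanishing of the groupoid $1$-cocycles $\underline{\om}$ and $\underline{d\om}$ on the Lie subgroupoid $R_k=\Ker(T\q)\oplus_G T^{\oplus (k-1)}G$ of $T^{\oplus k}G$, and then integrate the infinitesimal vanishing (equivalent to $Lie(\om)$ being $Lie(\q)$-basic) via Lemma~\ref{lma:factsgds}(2). The only difference is that you spell out details the paper merely asserts, namely the source-connectedness of $R_k$ (using the fibration condition in Definition~\ref{def:simpquotG} to show the constrained leg is a nonempty affine fiber) and the Cartan-formula reduction of basicness to horizontality of $\om$ and $d\om$.
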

\begin{proof}
	The key idea is to observe that $R_k:=Ker(T\q)\oplus_G TG \oplus_G \dots \oplus_G TG \subset T^{\oplus k} G$ defines a Lie subgroupoid for each $k$. Since $G\rra M$ is source-connected, then both $R_k$ and $T^{\oplus k}G\rra T^{\oplus k}M$ are source-connected as well. Both $\underline{\om}$ and $\underline{d\om}$ are Lie groupoid $1$-cocycles and $\om$ is $\q$-basic iff $$ \underline{\om}|_{R_k} = 0, \ \underline{d\om}|_{R_{k+1}} = 0.$$
	We apply Lemma \ref{lma:factsgds}(2.) and conclude that the above holds when it holds infinitesimally: 
	$$\underline{Lie(\om)}|_{Lie(R_k)}=0, \ \underline{Lie(\om)}|_{Lie(R_{k+1})}=0.$$
	But this is equivalent to the hypothesis of $Lie(\om)$ being $Lie(\q)$-basic.
\end{proof}

We summarize the discussion in the following theorem.
\begin{theorem}
	Let $G\rra M$ be a Lie groupoid with connected source fibers, $\om$ a multiplicative $k$-from on $G$ and $\q$ a simple quotient of $G\rra M$. If $Lie(\om)$ is $Lie(\q)$-basic, then $\om$ is $\q$-basic and
	$$ \text{$(\q_*G,\q_*\om)$ integrates $(\tilde A, (\tmu,\teta))$,}$$
	where $Lie(\q):Lie(G) \to \tilde A$ is the induced simple quotient of $Lie(G)$ and $(\tmu,\teta)$ are the components of $Lie(\q)_*Lie(\om)$.
\end{theorem}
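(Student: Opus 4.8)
The plan is to assemble this final theorem directly from the two propositions that immediately precede it, treating it as a clean synthesis rather than a new computation. The hypotheses are exactly those of Proposition \ref{prop:omqbasicfrominfi} ($G$ source-connected, $\om$ multiplicative, $Lie(\om)$ being $Lie(\q)$-basic) together with the setup of Proposition \ref{prop:Liequotcomm}. So the first step is simply to invoke Proposition \ref{prop:omqbasicfrominfi}: since $G$ has connected source fibers and $Lie(\om)$ is $Lie(\q)$-basic, we conclude that $\om$ is $\q$-basic. This is the only place where the source-connectedness hypothesis is genuinely used, and it is the conceptual heart of the statement — everything downstream is formal.

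Once $\om$ is known to be $\q$-basic, I would feed this into Proposition \ref{prop:Liequotcomm}. That proposition asserts precisely that, for a $\q$-basic multiplicative form, $Lie(\om)$ is $Lie(\q)$-basic and $Lie(\q_*\om) = [Lie(\q)]_*(Lie(\om))$, and moreover that $(\q_*G, \q_*\om)$ integrates $(Lie(\q)_*A, (\tmu,\teta))$ with $(\tmu,\teta)$ the components of the quotient infinitesimal form. Setting $\tilde A := Lie(\q)_* A = Lie(\q)_* Lie(G)$, which is the quotient Lie algebroid structure guaranteed by the earlier Lie-theoretic discussion (\cite[Thm. 4.4.7]{mackenziebook}), the conclusion that $(\q_*G,\q_*\om)$ integrates $(\tilde A,(\tmu,\teta))$ is immediate, where $(\tmu,\teta)$ are the components of $Lie(\q)_*Lie(\om)$. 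The identification $Lie(\q_*\om) = [Lie(\q)]_* Lie(\om)$ is what guarantees that the quotient groupoid-level form and the quotient algebroid-level form are compatible, i.e. that the integration claim is the correct one.

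There is essentially no obstacle to overcome here, since the theorem is explicitly billed in the surrounding text as a summary (``We summarize the discussion in the following theorem''). The only point requiring a moment of care is bookkeeping: one must check that the output $\tilde A$ named in Proposition \ref{prop:Liequotcomm} (written there as $Lie(\q)_*A$) agrees with the $\tilde A$ in the theorem statement, and that the components $(\tmu,\teta)$ referenced in the two results coincide — in the proposition they are called the components of $\tom_A = Lie(\q)_* Lie(\om)$, which is exactly what the theorem names. Both matches are definitional. I would therefore write the proof as a two-sentence chaining of the cited results.

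\begin{proof}
	Since $G$ has connected source fibers and $Lie(\om)$ is $Lie(\q)$-basic, Proposition \ref{prop:omqbasicfrominfi} implies that $\om$ is $\q$-basic. Applying Proposition \ref{prop:Liequotcomm} to the $\q$-basic multiplicative form $\om$, we obtain that $\q_*\om$ is a well-defined multiplicative $k$-form on $\q_*G$ and that $Lie(\q_*\om) = [Lie(\q)]_*(Lie(\om))$. Writing $\tilde A := Lie(\q)_* Lie(G)$ for the quotient Lie algebroid and $(\tmu,\teta)$ for the components of $[Lie(\q)]_*(Lie(\om))$, the same proposition yields that $(\q_*G, \q_*\om)$ integrates $(\tilde A, (\tmu,\teta))$, as claimed.
\end{proof}
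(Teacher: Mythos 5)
Your proposal is correct and takes exactly the paper's route: the theorem is stated there as a summary of the preceding discussion, and its justification is precisely the chaining of Proposition \ref{prop:omqbasicfrominfi} (to get that $\om$ is $\q$-basic from source-connectedness and the infinitesimal hypothesis) with Proposition \ref{prop:Liequotcomm} (to identify $Lie(\q_*\om)$ with $[Lie(\q)]_*Lie(\om)$ and conclude the integration claim), with multiplicativity of $\q_*\om$ already secured by Lemma \ref{lem:quotommult}. Your bookkeeping remarks on matching $\tilde A$ and $(\tmu,\teta)$ are the right (and only) points of care, so nothing further is needed.
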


As mentioned in Section \ref{subsec:simpleG}, given the infinitesimal data $(A,q_A,(\mu,\eta))$, it is in general a case-by-case study to find corresponding integrations $(G\rra M, \q, \om)$ such that $\q$ defines a simple quotient of $G\rra M$.

\subsection{Quotient by the kernel of multiplicative forms and their Lie theory}
To begin with, following Section \ref{subsec:general}, we say that a multiplicative form $\om$ on $G\rra M$ is {\bf kernel-reducible} when:
\begin{enumerate}
	\item $Ker(\om)\subseteq TG$ has constant rank as a distribution on $G$,
	\item $Ker(\om)\subseteq Ker(d\om)$,
	\item the quotient map $\q:=\q_{Ker(\om)}:G\to G/\F_{Ker(\om)}$ into the leaf space of the foliation $\F_{Ker(\om)}$ integrating $Ker(\om)\subseteq TG$ defines a simple quotient of $G\rra M$.
\end{enumerate}
Notice that, as in the general case discussed in Section \ref{subsec:general}, the foliation $\F(\q)$ defined by the $\q$-fibers is uniquely determined by $\om$. 
When a multiplicative form $\om$ is kernel reducible, then $\q_*\om$ defines a multiplicative form (see Lemma \ref{lem:quotommult}) with trivial kernel. We anticipate that, using Proposition \ref{prop:Liequotcomm} and Lemma \ref{lem:Liekerred} below, if $\om$ is kernel-reducible, then $Lie(\om)$ is kernel-reducible (see Section \ref{subsec:ctrkA}) and $\q_*\om$ integrates the IM form obtained as a quotient of $Lie(\om)$ by $Ker(Lie(\om))$.


In the remaining of this subsection, we proceed with a Lie-theoretic analysis of conditions 1. and 2. for kernel reducibility. We will always denote $(\mu,\eta)$ the components of a multiplicative  form $\om$ as defined in \eqref{eq:muetfrommult}. The following Lemma states some basic facts about the kernel of multiplicative forms.
\begin{lemma}\label{lma:basicmultprop}
	Let $\om$ be a multiplicative $k$-form on $G\rra M$. 
	\begin{enumerate}
		\item for every $x\in M$, the natural decomposition $T_{1(x)}G= E_G|_x \oplus D_x1(T_xM)$ gives rise to a splitting $Ker(\om|_{1(x)}) = Ker(\mu_x)\oplus D_x1(Ker(\mus_x))$,
		\item if $V_g \in Ker(\om|_g)$, $W_h \in Ker(\om_h)$ and $D_gs(V_g)=D_h\t(W_h)$ then 
		$$V_g\cdot W_h:= D_{(g,h)}m(V_g,W_h)\in Ker(\om|_{m(g,h)})$$
		\item if $V_g \in Ker(\om|_g)$ then $D_g\inv(V_g) \in Ker(\om|_{g^{-1}})$. In particular, if $V\in Ker(\om|_g)$ then $Ds(V)\in Ker(\mus_{s(g)})$ and $D\t(V) \in Ker(\mus_{\t(g)})$.
		\item $V  \in Ker(\om|_g)$ iff $D_g \t(V)\in Ker(\mus_{\t(g)})$ and
		$$ \om(V,\tilde v_2,\dots, \tilde v_k) =0 \text{ for all $\tilde v_2,\dots, \tilde v_k$ in a complement of $Ker(D_gs)\subset T_g G$.}$$
	\end{enumerate}
\end{lemma}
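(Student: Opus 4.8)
The plan is to reduce everything to the behaviour of $\om$ at the units $1(M)$ together with the transport properties coming from the multiplicativity identity $m^*\om=pr_1^*\om+pr_2^*\om$. Item (2.) is the most direct consequence: since $m\colon G^{(2)}\to G$ is a surjective submersion, $D_{(g,h)}m$ maps composable tangent vectors onto all of $T_{gh}G$, so I would write arbitrary test vectors as $\xi_i=V_g^{(i)}\cdot W_h^{(i)}$ and apply multiplicativity to get $\om|_{gh}(V_g\cdot W_h,\xi_2,\dots,\xi_k)=\om|_g(V_g,V_g^{(2)},\dots)+\om|_h(W_h,W_h^{(2)},\dots)$, which vanishes whenever $V_g\in Ker(\om|_g)$ and $W_h\in Ker(\om|_h)$.

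For item (3.) I would first record the two standard consequences of multiplicativity: pulling back along $x\mapsto(1(x),1(x))$ gives $1^*\om=0$, and then pulling back along $g\mapsto(g,g^{-1})$ gives $\inv^*\om=-\om$. The first claim of (3.) is then immediate, since $D_g\inv$ is an isomorphism and $\om|_{g^{-1}}(D_g\inv(\cdot),\dots)=-\om|_g(\cdot,\dots)$. For the ``in particular" part I would use item (2.): the pair $(V,D_g\inv(V))$ is composable over $(g,g^{-1})$ with product $D1(D_g\t(V))\in Ker(\om|_{1(\t g)})$, and reading off its $D1(TM)$-component through item (1.) forces $D\t(V)\in Ker(\mus)$; symmetrically $(D_g\inv(V),V)$ gives $Ds(V)\in Ker(\mus)$.

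The heart of the argument is item (1.), which I would get from an explicit description of $\om|_{1(x)}$ on the splitting $A_x\oplus D1(T_xM)$, with $A_x=Ker(T_{1x}s)$. Writing $\bar A_x:=Ker(T_{1x}\t)$, the key observation is that $\om|_{1(x)}$ \emph{vanishes on any tuple having at least one entry in $A_x$ and at least one in $\bar A_x$}; this follows cleanly from multiplicativity by choosing composable pairs $(\xi_i,0)$ for the $A_x$-entries, $(0,\xi_i)$ for the $\bar A_x$-entries and $(D1 w_i,D1 w_i)$ for the base entries, so that each summand on the right carries a zero slot. Combining this with the anchor decomposition $a=\bar a+D1(\rho(a))$ for $a\in A_x$ and the definition \eqref{eq:muetfrommult} of $\mu$, one obtains $\om|_{1x}(a_1,\dots,a_p,D1 w_{p+1},\dots,D1 w_k)=\mu(a_1)(\rho(a_2),\dots,\rho(a_p),w_{p+1},\dots,w_k)$ for $p\ge 1$ and $0$ for $p=0$. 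Evaluating a general test tuple through this formula and using antisymmetry of $\mu(\cdot)$ then shows that $a+D1(v)\in Ker(\om|_{1x})$ iff $a\in Ker(\mu)$ and $v\in Ker(\mus)$, the claimed splitting (mirroring \eqref{eq:Keromzero} for linear forms).

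Finally, for item (4.) I would write $g=1(\t g)\cdot g$ and apply multiplicativity to the composable pairs $(a,0_g)$ and $(D1(D\t\xi_j),\xi_j)$: for $a\in A_{\t g}$ with right-invariant value $\overrightarrow{a}|_g=D R_g(a)\in Ker(D_g s)$ and arbitrary $\xi_j$, this gives $\om|_g(\overrightarrow{a}|_g,\xi_2,\dots,\xi_k)=\mu(a)(D_g\t(\xi_2),\dots,D_g\t(\xi_k))$. Since $Ker(D_g s)$ is precisely the set of such $\overrightarrow{a}|_g$, the ``only if" direction and the necessity of $D\t(V)\in Ker(\mus)$ are read off at once. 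For the ``if" direction I decompose each test vector along $Ker(D_g s)$ and a chosen complement: the all-complement term is killed by hypothesis, while any term carrying a $Ker(D_g s)$-factor is killed by the displayed identity together with $D\t(V)\in Ker(\mus)$, after moving $D\t(V)$ into the first slot of $\mu$ by antisymmetry. The main obstacle throughout is establishing and correctly bookkeeping this unit-level structure (the mixed-vanishing statement and the two resulting formulas), and I expect the one genuinely load-bearing point to be that the computations in (4.) close up only because of the first IM equation $i_{\rho(e_1)}\mu(e_2)=-i_{\rho(e_2)}\mu(e_1)$ in \eqref{eq:IMeqs}, which lets the anchor contributions recombine into the single condition $D\t(V)\in Ker(\mus)$.
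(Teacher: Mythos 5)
Your proof is correct and is essentially the elaboration the paper leaves implicit: the paper's entire proof of this lemma is the remark that the properties ``follow directly from the multiplicativity of $\om$ combined with the fact that the multiplication map on $G$ is a surjective submersion,'' and your unit-level mixed-vanishing computation, the identity $V = T(1\circ\t)(V)\cdot V$, and the resulting formula $\om|_g(a^R|_g,\xi_2,\dots,\xi_k)=\mu(a)(D\t(\xi_2),\dots,D\t(\xi_k))$ for $a^R$ right-invariant are exactly the standard way to carry this out. One small correction to your closing remark: nothing in your argument actually requires the first IM equation as an input --- the ``if'' direction of (4.) closes up using only the skew-symmetry of $\mu(a)$ as a $(k-1)$-form together with $D\t(V)\in Ker(\mus)$, and indeed the identity $i_{\rho(e_1)}\mu(e_2)=-i_{\rho(e_2)}\mu(e_1)$ is itself a \emph{consequence} of your unit-level formula plus the antisymmetry of $\om$ (as your own consistency check in item (1.) shows), so the lemma is a purely formal consequence of multiplicativity, consistent with the paper's one-line proof.
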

These properties follow directly from the multiplicativity of $\om$ combined with the fact that the multiplication map on $G$ is a surjective submersion.
The following lemma says that differentiation preserves kernel-reducibility.
\begin{lemma}\label{lem:Liekerred}
If $\om\in\Omega^k(G)$ is a kernel-reducible multiplicative form inducing a simple quotient $\q_{Ker(\om)}:G\to G/\F_{Ker(\om)}$, then $Lie(\om)\in\Omega^k(E_G)$ is kernel-reducible (as in Section \ref{subsec:ctrkA}). In this case, the quotient maps are related by $Lie(\q_{Ker(\om)}) = q_{Ker(Lie(\om))}$.
\end{lemma}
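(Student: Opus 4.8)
The plan is to verify, using the infinitesimal structural properties of $Ker(\om)$ collected in Lemma \ref{lma:basicmultprop}, that the components $(\mu,\eta)$ of $Lie(\om)$ satisfy the five conditions of Corollary \ref{cor:linomkerred}, and then to check that the resulting quotient data coincides with the components of the vector bundle simple quotient $Lie(\q_{Ker(\om)})$.

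First I would pin down the base and kernel data. Writing $\q=\q_{Ker(\om)}$, by construction $Ker(T\q)=Ker(\om)$, and its base map $q_M$ is the restriction of $\q$ to the units. Evaluating at a unit $1(x)$ and using the splitting $Ker(\om|_{1(x)})=Ker(\mu_x)\oplus D_x1(Ker(\mus_x))$ of Lemma \ref{lma:basicmultprop}(1), which is adapted to the direct sum $T_{1(x)}G=E_G|_x\oplus D_x1(T_xM)$, yields at once
$$ Ker(Lie(\q))=Ker(\mu), \qquad Ker(Tq_M)=Ker(\mus).$$
Since $Lie(\q)$ is a simple quotient of $E_G$, the first identity shows $Ker(\mu)$ is a subbundle (so $\mu$ has constant rank, condition (1.)), and the second shows $Ker(\mus)=Ker(Tq_M)$ has constant rank for the submersion $q_M$ (condition (2.)). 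For condition (3.) I would apply the same splitting to $d\om$, which is multiplicative with infinitesimal components $(\eta,0)$ directly from \eqref{eq:muetfrommult}; the containment $Ker(\om)\subset Ker(d\om)$ at units then splits along $E_G|_x\oplus D_x1(T_xM)$ into $Ker(\mu)\subset Ker(\eta)$ and $Ker(\mus)=Ker(Tq_M)\subset Ker(\eta^\sharp)$.

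For the remaining conditions I would use that $\om$, being kernel-reducible, is $\q$-basic, so by Proposition \ref{prop:Liequotcomm} $Lie(\om)$ is $Lie(\q)$-basic; by Proposition \ref{prop:linomqbasic} this means that for every $Lie(\q)$-invariant section $e$ the forms $\mu(e)$ and $\eta(e)$ are $q_M$-basic. Take $\Delta$ to be the representation underlying $Lie(\q)$ and $\bna$ its differentiation (Remark \ref{rmk:integrablebna}). Working in a local $Lie(\q)$-invariant frame $\{e_k\}\cup\{e_\a\}$ with $\{e_k\}$ spanning $Ker(\mu)$ (Remark \ref{rmk:localinvarbasis}) and writing $e=c^ae_a$, then for $v\in Ker(Tq_M)$ the $q_M$-basicity of $\mu(e_a)$ forces $i_v\mu(e_a)=0$ and $i_vd\mu(e_a)=0$, so all terms of $i_vd\mu(e)$ collapse to the derivative term, giving $i_vd\mu(e)=(L_vc^\a)\mu(e_\a)=\mu(\bna_v\bar e)$ (using $\mu(e_k)=0$); this is condition (4.). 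Likewise, if $\bna_v\bar e=0$, i.e. $L_vc^\a=0$, then $i_vd\eta(e)=(L_vc^\a)\eta(e_\a)=0$, where I use $\eta(e_k)=0$ from condition (3.); this is condition (5.).

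With all five conditions verified, Corollary \ref{cor:linomkerred} gives that $Lie(\om)$ is kernel-reducible, and Proposition \ref{prop:qomred} identifies the quotient data of $q_{Ker(Lie(\om))}$ as $(q_M,Ker(\mu),\Delta)$. These are precisely the components of $Lie(\q_{Ker(\om)})$, so by the one-to-one correspondence between simple quotients and their components the two maps coincide, $Lie(\q_{Ker(\om)})=q_{Ker(Lie(\om))}$. I expect the main obstacle to be conditions (4.) and (5.): the real content there is that the geometric representation $\Delta$ coming from $Lie(\q)$ differentiates to exactly the connection $\bna$ attached to $(\mu,\eta)$ in Lemma \ref{lem:keromlin}, and the cleanest route is the invariant-frame computation above rather than a direct comparison of connections.
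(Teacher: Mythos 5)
Your proof is correct, but it takes a much more explicit route than the paper, which disposes of this lemma in one line as ``a direct consequence of the definitions involved.'' The implicit direct argument is this: kernel-reducibility gives $\om=\q^*\tom$ with $Ker(T\q)=Ker(\om)$ and $\tom$ of trivial kernel; Proposition \ref{prop:Liequotcomm} then yields $Lie(\om)=Lie(\q)^*Lie(\tom)$, and $Lie(\tom)$ has trivial kernel (its components satisfy $Ker(\tmu)=0$, $Ker(\tmu^\sharp)=0$ by Lemma \ref{lma:basicmultprop}(1) at units, which forces trivial kernel everywhere via \eqref{eq:Vkercoord}). Hence $Ker(Lie(\om))=Ker(T\,Lie(\q))$ on the nose: this is the tangent distribution to the fibers of the vector bundle simple quotient $Lie(\q)$, so constant rank, containment in $Ker(d\,Lie(\om))=Ker(Lie(\q)^*d\,Lie(\tom))$, simplicity of the quotient, and the identity $Lie(\q_{Ker(\om)})=q_{Ker(Lie(\om))}$ all follow simultaneously, with no coordinate work. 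You instead verify the five component conditions of Corollary \ref{cor:linomkerred} one by one: the unit-splitting of Lemma \ref{lma:basicmultprop}(1), applied to $\om$ and to $d\om$ (correctly noting its components are $(\eta,0)$), gives conditions (1.)--(3.), and your invariant-frame computation for (4.)--(5.) is sound — in particular your observation that the representation $\Delta$ carried by $Lie(\q)$ differentiates to the unique connection $\bna$ satisfying $\mu(\bna_v\bar e)=i_vd\mu(e)$ (uniqueness because $\overline{\mu}$ is injective on $E/Ker(\mu)$) is exactly what makes the final identification of quotient maps legitimate via the 1-1 correspondence between simple quotients and their components. What your approach buys is a complete, self-contained dictionary between the groupoid-level data and all the infinitesimal components $(q_M,Ker(\mu),\Delta,\bna)$ of Lemma \ref{lem:keromlin}, which the one-line argument leaves implicit; what it costs is length, and a slight redundancy: once one notices $Ker(Lie(\om))=Ker(T\,Lie(\q))$, conditions (4.) and (5.) are automatic consequences of $Lie(\om)$ being a pullback and need no frame computation.
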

This Lemma is a direct consequence of the definitions involved.
We now go in the opposite direction of "integration". The following proposition gives an infinitesimal criterion for a multiplicative form to have constant rank, which might be of independent interest.

\begin{proposition} \label{prop:ctrankfrominfinit}
	Assume that $G\rra M$ has connected source fibers and let $\om$ be a multiplicative $k$-form. If $Ker(Lie(\om))$ has constant rank and $Ker(Lie(\om))\subset Ker(dLie(\om))$ then $Ker(\om)$ has constant rank and $Ker(\om)\subset Ker(d\om)$.
\end{proposition}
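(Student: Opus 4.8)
Throughout write $(\mu,\eta)$ for the components of $Lie(\om)$. Then $d\om$ is again multiplicative, $Lie(d\om)=dLie(\om)$ has components $(\eta,0)$, and by Lemma \ref{lem:keromlin} the two hypotheses say precisely that $\mu$ and $\mus$ have constant rank and that $Ker(\mu)\subseteq Ker(\eta)$, $Ker(\mus)\subseteq Ker(\eta^\sharp)$; in particular $\dim Ker(\mu_x)$ and $\dim Ker(\mus_x)$ are independent of $x$. The plan is to use Lemma \ref{lma:basicmultprop} to pin down $Ker(\om)$ pointwise: first at units, then at an arbitrary arrow $g$ by transporting the unit picture with the multiplicative structure and source-connectedness; once constant rank is known, the inclusion $Ker(\om)\subseteq Ker(d\om)$ is obtained from the cocycle-integration Lemma \ref{lma:factsgds}.

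\emph{Constant rank.} By Lemma \ref{lma:basicmultprop}(1), $Ker(\om|_{1(x)})=Ker(\mu_x)\oplus D1(Ker(\mus_x))$, so $\dim Ker(\om|_{1(x)})=\dim Ker(\mu)+\dim Ker(\mus)=:r$ is constant along units. For a general arrow $g$, right translation gives an isomorphism between the $s$-vertical part $Ker(\om|_g)\cap Ker(D_gs)$ and $Ker(\mu_{\t(g)})$: if $D_gs(V)=0$ then $V\cdot 0_{g^{-1}}\in Ker(\om|_{1(\t g)})$ is $s$-vertical by Lemma \ref{lma:basicmultprop}(2), hence lies in $Ker(\mu_{\t g})$, and $a\mapsto a\cdot 0_g$ is the inverse. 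Thus the $s$-vertical part has constant dimension $\dim Ker(\mu)$, while Lemma \ref{lma:basicmultprop}(3) gives $D_gs(Ker(\om|_g))\subseteq Ker(\mus_{s(g)})$. Hence $\dim Ker(\om|_g)=\dim Ker(\mu)+\dim D_gs(Ker(\om|_g))\le r$, with equality exactly when $D_gs\colon Ker(\om|_g)\to Ker(\mus_{s(g)})$ is onto.

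\emph{The crux: surjectivity of $D_gs$.} It remains to show this map is onto for every $g$, which I would localize to the unit by means of a bisection. Choosing a local bisection $\sigma$ with $\sigma(x)=g$, $x=s(g)$, and writing $L_\sigma=m\circ(\sigma\circ\t,\mathrm{id})$, multiplicativity of $\om$ yields $L_\sigma^*\om=\om+\t^*(\sigma^*\om)$. Since $s\circ L_\sigma=s$, we have $Ds\circ DL_\sigma=Ds$ and $Ker(\om|_g)=DL_\sigma\big(Ker((\om+\t^*\sigma^*\om)|_{1(x)})\big)$, so that $D_gs(Ker(\om|_g))=D_{1(x)}s\big(Ker((\om+\t^*\sigma^*\om)|_{1(x)})\big)$. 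Surjectivity at $g$ thus reduces to the following linear-algebra statement at the single point $1(x)$: for every $w\in Ker(\mus_x)$ there is $e\in E_G|_x$ with $i_{\,e+D1(w)}\,(\om+\t^*\sigma^*\om)|_{1(x)}=0$. Expanding $\om$ at the unit through $(\mu,\eta)$ and using that $(\mu,\eta)$ satisfies the IM equations \eqref{eq:IMeqs} together with the hypotheses $Ker(\mu)\subseteq Ker(\eta)$ and $Ker(\mus)\subseteq Ker(\eta^\sharp)$, one solves for $e$. Establishing this solvability — that the multiplicative/IM compatibility forces the perturbed unit equation to be solvable for every $w\in Ker(\mus)$ — is the main obstacle of the proof; it is where semicontinuity alone does not suffice, since fullness is a closed condition and the one-sided translations only move the fibre directions. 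Granting it, $D_gs$ is onto for all $g$, so $\dim Ker(\om|_g)=r$ everywhere and $Ker(\om)$ has constant rank.

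\emph{The inclusion $Ker(\om)\subseteq Ker(d\om)$.} Now $Ker(\om)\subseteq TG$ is a smooth constant-rank distribution and, by Lemma \ref{lma:basicmultprop}(2),(3), a Lie subgroupoid of $TG\rra TM$ over $Ker(\mus)$ whose source fibres are connected (those of $G$ are, and the fibres of $D_gs|_{Ker(\om)}$ are affine spaces). Set $S:=Ker(\om)\oplus_G TG\oplus_G\cdots\oplus_G TG\subseteq T^{\oplus(k+1)}G$, with the kernel in the first slot; this is a source-connected Lie subgroupoid. The map $\underline{d\om}\colon T^{\oplus(k+1)}G\to\R$ is a groupoid $1$-cocycle, and $\underline{d\om}|_S=0$ is equivalent to $i_Vd\om|_g=0$ for all $V\in Ker(\om|_g)$ and all $g$, i.e. to $Ker(\om)\subseteq Ker(d\om)$. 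By Lemma \ref{lma:factsgds}(2) this holds once it holds infinitesimally, $\underline{Lie(d\om)}|_{Lie(S)}=0$; since $Lie(d\om)=dLie(\om)$ and the first slot of $Lie(S)$ is $Ker(Lie(\om))$ (the Lie algebroid of the subgroupoid $Ker(\om)$, cf. Lemma \ref{lem:Liekerred}), this is exactly $Ker(Lie(\om))\subseteq Ker(dLie(\om))$, which is our hypothesis. This yields $Ker(\om)\subseteq Ker(d\om)$ and completes the proof.
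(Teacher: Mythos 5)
There is a genuine gap, and you have flagged it yourself: the entire constant-rank claim hinges on the surjectivity of $D_gs\colon Ker(\om|_g)\to Ker(\mus_{s(g)})$ for every $g$, and your argument proves nothing here — the bisection trick only \emph{reformulates} the problem as a solvability statement at the unit $1(x)$ (find $e$ with $i_{e+D1(w)}(\om+\t^*\sigma^*\om)|_{1(x)}=0$), which you then grant without proof. This reformulated statement is not a pointwise linear-algebra consequence of the IM equations: the data at a single point (the values of $\mu,\eta$ and of the auxiliary form $\sigma^*\om$ at $x$) do not suffice. Indeed, your opening summary of the hypotheses is already too weak: Lemma \ref{lem:keromlin} encodes the assumptions $Ker(Lie(\om))$ of constant rank and $Ker(Lie(\om))\subset Ker(dLie(\om))$ in \emph{seven} conditions, and the ones you dropped — condition (3.) on $i_v(d\mu(e)+\eta(e))$, involutivity of $Ker(\mus)$, flatness of the partial connection $\bna$, and the $d\eta$-compatibility (5.)--(7.) — are differential, not pointwise, and they are exactly what makes the crux true. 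Any attempt to verify your unit-point solvability must use them, which forces a non-pointwise argument.

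The paper supplies precisely this missing step by a transport argument: writing $g$ as a composition of flows of right-invariant vector fields $e_j^R$ as in \eqref{eq:gpathid} (this is where source-connectedness enters), choosing each $e_j$ to be invariant for the local simple quotient defined by $Ker(Lie(\om))$ (``admissible''), and proving an internal lemma that such flows carry $Ker(\om|_{g_0})$ into $Ker(\om|_{g(t)})$: one shows the obstruction $R(t)$ vanishes identically by computing $\frac{d}{dt}R(t)$ via the cocycle-integration formula of Lemma \ref{lma:factsgds}(1) and the IM equations \eqref{eq:IMeqs}, where the chain of cancellations uses exactly the flatness of $\bna$, the involutivity of $Ker(\mus)$, and the invariance of $e_j$. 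Transporting $Ker(\mu_x)\oplus D1(Ker(\mus_x))$ along the broken path gives the lower bound on $\dim Ker(\om|_g)$, and transporting $T1(D_gs(V))$ gives the upper bound via $Ker(D_gs)\cap Ker(\om|_g)=Ker(\mu)^R|_g$ — this last identification you did prove correctly, and your dimension bookkeeping at units agrees with the paper's. Your final step ($Ker(\om)\subseteq Ker(d\om)$ via the subgroupoid $S=Ker(\om)\oplus_G TG\oplus_G\cdots\oplus_G TG$ and Lemma \ref{lma:factsgds}(2)) is a clean packaging of what the paper does more tersely (``the same argument along the broken path'', using that $d\om$ has components $(\eta,0)$), but it presupposes the constant-rank statement for smoothness and source-connectedness of $S$, so it cannot rescue the proposal: without the transport lemma, the proof does not close.
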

\begin{proof}
	Let us denote $\om_A = Lie(\om)$ with components $(\mu,\eta)$ which, by hypothesis, must satisfy all properties in Lemma \ref{lem:keromlin}. By Lemma \ref{lma:basicmultprop}, we know that the rank of $Ker(\om|_1)$ equals the rank of $Ker(\om_A)$. We thus first need to show that $dim \ Ker(\om|_g) = rk(Ker(\om_A))$ for any $g\in G$. 
	
	To that end, we first discuss how the hypothesis reflect on the local geometry of the underlying bundle $E\equiv E_G$. The fact that $\om_A$ has constant rank and $Ker(\om_A)\subset Ker(d\om_A)$ implies (via the equivalences of Lemma \ref{lem:keromlin}) to the fact that $Ker(\om_A)$ defines locally a simple quotient of $E\to M$, i.e. for each $x\in M$, there exists an open $U\ni x$ in $M$ and a simple quotient $q_U$ of $E_U:=E|_U\to U$ such that $Ker(Tq_U)=Ker(\om_A|_{E_U})$. The restriction of $\om_A$ to such $E_U$ becomes $q_U$-basic. Within this proof, when $U$ satisfies the above and $E_U$ admits a basis of $q_U$-invariant sections, we say that $U$ is an \emph{admissible} open set. It is clear that $M$ admits a covering by admissible open sets (see Remark \ref{rmk:localinvarbasis}).
	
	Now, let us fix $g\in G$. Since $G$ is source-connected, following \cite[Lemma 2.3]{MX} there exists a finite collection of compactly supported sections $e_1,\dots, e_n \in \Gamma_M E_G$ such that
	\begin{equation}\label{eq:gpathid} g = \phi_{t=1}^{e_n^R}\circ \dots \circ \phi_{t=1}^{e_1^R} (1_x), \ x=s(g),\end{equation}
	where $e^R \in \mathfrak{X}(G)$ denotes the right-invariant vector field induced  by a section $e\in \Gamma_M E_G$. Moreover, at the possible cost of enlarging $n$, we shall see that the sections $e_j$ can be chosen in a special way. First, the support of $e_j$ can be taken to lie inside an admissible open set $U_j\subset M$. Second, recursively denoting 
	$$g_j(t)= \phi_{t}^{e_j^R}(g_{j-1}(1)), \ g_1(t)=\phi_{t}^{e_1^R}(1_x)$$ 
	we can choose each $e_j$ so that $\t(g_j(t)), \ t\in [0,1]$ lies in an open $V_j\subset U_j$ such that $e_j|_{V_j}$ is $q_{U_j}$-invariant. This is possible since $E_{U_j}$ admits a basis of $q_{U_j}$-invariant sections and $s$ is locally a product. Within this proof, we say that each such $e_j$ is \emph{admissible relative to} $g_{j-1}$".

	The key idea will be that elements of the kernel of $\om$ can be transported along each piece of the above broken path.
	\begin{lemma}
		For any $e\in \Gamma_M E_G$ admissible relative to $g_0\in G$ as above, denote $g(t)= \phi_{t}^{e^R}(g_0)$. Consider a decomposition $T_{g_0}G=Ker(D_{g_0}s)\oplus C_{g_0}$ and $V_0 = b^R|_{g_0} + \tilde v \in Ker(\om|_{g_0})$, with $b\in \Gamma_M Ker(\mu)\subset \Gamma_M E_G$ and $\tv \in C_{g_0}$. Then, $V_t:=b^R|_{g(t)} + D_{g_0}\phi_{t}^{e^R}\tv \in Ker(\om|_{g(t)})$ for all $t\in [0,1]$.
	\end{lemma}
	\begin{proof}(of the Lemma)
		We first notice that $D_{g_0}\phi_{t}^{e^R}(C_{g_0})\subset T_{g(t)}G$ defines a complement for $Ker(D_{g(t)}s)\subset T_{g(t)}G$. 	According to Lemma \ref{lma:basicmultprop}, we need to check that 
		$$ \text{(1.) } D_{g(t)}\t(V_t) \in Ker(\mus_{\tau(g(t))}),\text{ and (2.) } \om_{g(t)}(V_t,D_{g_0}\phi_{t}^{e^R}(\tv_2),\dots D_{g_0}\phi_{t}^{e^R}(\tv_k))=0,$$
		for all $\tv_2,\dots, \tv_k \in C_{g_0}$ and all $t\in [0,1]$. We also recall that the right-invariant vector field $e^R$ is projectable along the target map $\t$ to $\rho(e) \in \mathfrak{X}(M)$, so that 
		$$ x(t):=\t(g(t))=\phi^{\rho(e)}_t(\t(g_0)).$$
		
		Now, using $\t(gh)=\t(g)$, the definition of the anchor $\rho$ and the definition of $V_t$, (1.) above reads
		$$ \rho(b|_{x(t)})+ D_{x_0}(\phi^{\rho(e)}_t\circ \t)(\tv) \in Ker(\mus_{x(t)}),\ \forall t\in [0,1].$$
		Since $(\mu,\eta)$ satisfy the IM equations \eqref{eq:IMeqs}, we know that $\rho(Ker(\mu))\subset Ker(\mus)$, so (1.) becomes equivalent to 
		$$v(t):=D_{x_0}(\phi^{\rho(e)}_t\circ \t)(\tv) \in Ker(\mus_{x(t)}),\ \forall t.$$
		Since $V_0 \in Ker(\om|_{g_0})$ by hypothesis, then $v(0)=D_{x_0}\t(\tv)\in Ker(\mus_{x(0)})$. The idea is that, when $e$ is $q_{Ker(\om_A)}$-invariant, then the linearized flow $T\phi^{\rho(e)}_t$ preserves $Ker(\mus)\subset TM$. Indeed, using that $Ker(\mus)$ is involutive by Lemma \ref{lem:keromlin}, it is enough to verify it for small $t$ and on an arbitrary adapted coordinate chart $U=F\times \tilde U\ni (y,\tilde x)$ in which $Ker(\mus)$ is given by the tangent to the $F$-fibers. From the proof of Theorem \ref{thm:omActrk} (see eq. \eqref{eq:rirho}), we know $\rho(e)$ is projectable to $\tilde U$, taking the form
		$$ \rho(e) = \alpha^l(y,\tilde x) \partial_{y^l} + \beta^k(\tilde x) \partial_{\tilde x^k}.$$
		The linearized flow $(\delta y, \delta \tilde x)\mapsto T\phi^{\rho(e)}_t(\delta y, \delta \tilde x)$ then preserves the tangent to the $F$-fibers, defined  by $\delta \tilde x =0$, as wanted. This shows that (1.) holds.
		
		For (2.), we observe that the definition of $V_t$ makes (2.) equivalent to
		$$R(t):= [(\phi^{\rho(e)}_t\circ \t)^* \mu(b) + i_{\tv}(\phi^{e^R}_t)^*\om]|_{C_{g_0}^{\times k-1}} = 0,$$
		where we used $b^R|_g = b(\t(g))\cdot 0_g$, $V=T(1\circ \t)(V) \cdot V$ for any $V\in TG$ and the multiplicativity of $\om$. We now evaluate
		$$ E(t):= \om|_{g(t)}(D\phi^{e^R}_t(\tv), D\phi^{e^R}_t(\tv_2),\dots, D\phi^{e^R}_t(\tv_k) )$$
		in terms of the infinitesimal $k$-form $\om_A$. For that, we take the direct sum groupoid $T^{\oplus k}G\rra T^{\oplus k}M$, and consider the curve
		$$ \hat g(t):= (D_{g_0}\phi^{e^R}_t(\tv),D_{g_0}\phi^{e^R}_t(\tv_2),\dots, D_{g_0}\phi^{e^R}_t(\tv_k)) \in T^{\oplus k} G.$$ 
		By right-invariance, it is clear that $(D_{g(t)}s\oplus \dots \oplus D_{g(t)}s)\hat g(t)$ is constant, so that the curve $\hat g(t)$ remains on a fixed source-fiber of $T^{\oplus k}G\rra T^{\oplus k}M$. Then, Lemma \ref{lma:factsgds}(1.) implies that
		$$ E(t)=  \om|_{g_0}(\tv, \tv_2,\dots, \tv_k ) + \int_0^t [(e\circ \phi^{\rho(e)}_s\circ \t)^*\om_A]|_{g_0} (\tv,\tv_2,\dots, \tv_k) \ ds,$$
		where one uses that $DR^{T^{\oplus k}G}_{\hat g^{-1}}\frac{d}{dt}\hat g (t) = \left( Te(D_{g_0}(\phi^{\rho(e)}_t\circ \t) \tv),\dots, Te(D_{g_0}(\phi^{\rho(e)}_t\circ \t) \tv_k) \right) \in T^{\oplus k}A$ under the identification\footnote{A section $e$ of $E$ produces $e^R\in \mathfrak{X}(G)$ and its tangent lift (see eg. \cite{bursztyncabrera}) yields $(e^{R})_T\in \mathfrak{X}(TG)$. On the other hand, $Te:TM\to TE$ defines a section for the Lie algebroid $TA$ and thus generates a right-invariant vector field $(Te)^{R_{TG}}\in \mathfrak{X}(TG)$ via $Lie(TG)\simeq TA$. These two vector fields on $TG$ coincide, see eg. \cite{MX2}. } $Lie(T^{\oplus k}G)\simeq T^{\oplus k} A$. 
		
		Now,
		$$R(0)(\tv_2,\dots,\tv_k)=\mu(b)|_{x(0)}(D \t (\tv_2),\dots, D \t(\tv_k)) + \om|_{g_0}(\tv, \tv_2,\dots, \tv_k ) = \om|_{g_0}(V_0,\tv_2,\dots,\tv_k)=0$$
		for all $\tv_j\in C_{g_0}$ since $V_0 \in Ker(\om|_{g_0})$ by hypothesis. On the other hand, using the above computation of $E(t)$,
		\begin{eqnarray}
		\frac{d}{dt}R (t) &=& [(\phi_t^{\rho(e)}\circ \t)^*L_{\rho(e)} \mu(b) +   i_{\tv} (e\circ \phi^{\rho(e)}_t\circ \t)^*\om_A]|_{C_{g_0}^{\times k-1}}  \nonumber \\
		&\overset{\text{def. $v(t)$, Remark \ref{rmk:omlinpulle}}}{=} & [(\phi_t^{\rho(e)}\circ \t)^*\left(L_{\rho(e)} \mu(b) +   i_{v(t)}(d\mu(e)+\eta(e) ) \right) ]|_{C_{g_0}^{\times k-1}}  \nonumber \\
		&\overset{\text{IM eqs. \eqref{eq:IMeqs},} v(t)\in Ker(\mus)\subset Ker(\eta^\sharp) }{=} & [(\phi_t^{\rho(e)}\circ \t)^*\left(\mu([e,b]) + i_{\rho(b)}(d\mu(e)+\eta(e)) +   i_{v(t)}d\mu(e) \right) ]|_{C_{g_0}^{\times k-1}}  \nonumber \\
		&\overset{b\in Ker(\mu), \ \rho(b)\in Ker(\mus)\subset Ker(\eta^\sharp) }{=} & [(\phi_t^{\rho(e)}\circ \t)^*\left(\mu([e,b]) + i_{\rho(b)+v(t)}d\mu(e) \right) ]|_{C_{g_0}^{\times k-1}}  \nonumber \\
		&\overset{\text{Prop. \ref{prop:Abasic}(2.) for $Ker(\om_A)$}}{=} & [(\phi_t^{\rho(e)}\circ \t)^*\left(i_{\rho(b)+v(t)}d\mu(e) \right) ]|_{C_{g_0}^{\times k-1}}  \nonumber \\
		&\overset{\text{Lemma \ref{lem:keromlin}(6.), $\rho(b)+v(t)\in Ker(\mus)$}}{=} & [(\phi_t^{\rho(e)}\circ \t)^*\left(\mu(\bna_{\rho(b)+v(t)}(\bar e)) \right) ]|_{C_{g_0}^{\times k-1}}  \nonumber \\
		&\overset{\text{$e$ is $q_{Ker(\om_A)}$-invariant}}{=} & 0. \nonumber 
		\end{eqnarray}
		Hence $R(t)=0$ for all $t$, saying that (2.) holds and thus finishing the proof of the Lemma.
		\end{proof}

In order to finish the proof of Proposition \ref{prop:ctrankfrominfinit}, we apply the previous Lemma to each piece of the broken path \eqref{eq:gpathid}, starting with $g_0=1_x$, $e=e_1$ and $b(x) + T1(v)\in Ker(\om|_{1(x)})$ with $b\in \Gamma_M(Ker \mu)$ and $v\in Ker(\mus_x)$. Then, the above lemma says that $V_1 =b^R|_{g_1 } + D_{x}(\phi_{t=1}^{e_1^R}\circ 1)v \in Ker(\om|_{g_1}) $ where $g_1=\phi^{e_1^R}_{t=1}(1_x)$. Iterating this procedure we obtain that $$ b^R|_{g} +  D_{x}(\phi_{t=1}^{e_n^R}\circ \dots \circ \phi_{t=1}^{e_1^R}\circ 1)v \in Ker(\om|_g) \text{ for all $b\in \Gamma_MKer(\mu)$ and $v\in Ker(\mus_x)$.}$$
This shows that $dim(Ker \om|_g)\geq rk(Ker(\mu))+rk(Ker(\mus))=rk(Ker(\om_A))$ for all $g\in G$. 

Regarding the proof of the other inequality, let us fix $V\in Ker(\om|_g)$. It follows from Lemma \ref{lma:basicmultprop} that $D_gs(V)\in Ker(\mus_x)$. Hence, applying the Lemma above iteratively along the broken path \ref{eq:gpathid}, with $b=0$ and the initial $\tv_0=T1(D_gs(V))$ we obtain an element $\tv \in Ker(\om|_g)$ satisfying $D_gs(\tv)=D_gs(V)$. Then $V-\tv \in Ker(D_gs)\cap Ker(\om|_g) = Ker(\mu)^R|_g$, showing that $dim(Ker(\om|_g))\leq rk(Ker(\mu))+rk(Ker(\mus))$. This finishes the proof that $Ker(\om)$ has constant rank.

The same argument using the broken path \eqref{eq:gpathid} can be also applied to show that $Ker(\om)\subset Ker(d\om)$, by recalling that the infinitesimal components of $d\om$ are $(\eta,0)$ and using the properties of $(\mu,\eta)$ appearing in Lemma \ref{lem:keromlin}.
\end{proof}

We summarize  the discussion in the following theorem.

\begin{theorem}\label{thm:Gomkerred}
	Let $G\rra M$ be a Lie groupoid with connected source fibers and $\om$ a multiplicative $k$-form on $G$. If $Lie(\om)$ is kernel-reducible in $Lie(G)$, then the distribution $Ker(\om)\subset TG$ is integrated by a regular foliation $\F_{Ker(\om)}$. In addition, if the quotient map $\q:G\to G/\F_{Ker(\om)}$ defines a simple quotient of $G\rra M$, then $\om$ is kernel reducible. In this case,
	$$ \text{$(\q_*G,\q_*\om)$ integrates $(\tilde A, (\tmu,\teta))$,}$$
	where $q:Lie(G) \to \tilde A$ is the simple quotient of $Lie(G)$ by $Ker(Lie(\om))$ and $(\tmu,\teta)$ are the components of $q_*Lie(\om)$.
\end{theorem}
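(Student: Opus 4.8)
The plan is to obtain this statement by assembling the infinitesimal-to-global results proved earlier, since the substantive analytic content has already been carried out in Proposition \ref{prop:ctrankfrominfinit}. First I would unpack the hypothesis: saying that $Lie(\om)$ is kernel-reducible in $Lie(G)$ entails, in particular, that $Ker(Lie(\om))\subset TE_G$ has constant rank and that $Ker(Lie(\om))\subset Ker(dLie(\om))$ (these are conditions (1) and (2) of kernel-reducibility for the linear form). These are precisely the two hypotheses of Proposition \ref{prop:ctrankfrominfinit}, so, using that $G$ has connected source fibers, I conclude that $Ker(\om)\subset TG$ has constant rank and that $Ker(\om)\subset Ker(d\om)$. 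By the Cartan-formula argument recalled in Section \ref{subsec:general}, these two properties guarantee that $Ker(\om)$ is a regular involutive distribution, hence integrated by a regular foliation $\F_{Ker(\om)}$. This settles the first assertion.

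For the second assertion I would simply check the three defining conditions of a kernel-reducible multiplicative form against what is now available: conditions (1) constant rank and (2) $Ker(\om)\subset Ker(d\om)$ come from the previous paragraph, and condition (3), that $\q=\q_{Ker(\om)}$ define a simple quotient of $G\rra M$, is exactly the extra hypothesis. Hence $\om$ is kernel-reducible; in particular, being of constant rank with $Ker(\om)\subset Ker(d\om)$ and inducing a simple quotient, $\om$ is automatically $\q$-basic (again by Section \ref{subsec:general}).

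The final assertion is then a matter of identifying the quotient data. Since $\om$ is multiplicative and $\q$-basic, Proposition \ref{prop:Liequotcomm} applies and yields that $(\q_*G,\q_*\om)$ integrates $(Lie(\q)_*A,(\tmu,\teta))$, where $(\tmu,\teta)$ are the components of $Lie(\q)_*Lie(\om)$. It remains to match $Lie(\q)$ with the infinitesimal kernel-quotient, and this is provided by Lemma \ref{lem:Liekerred}: since $\om$ is kernel-reducible, $Lie(\om)$ is kernel-reducible and $Lie(\q_{Ker(\om)})=q_{Ker(Lie(\om))}=q$. Therefore $Lie(\q)_*A=q_*Lie(G)=\tilde A$ and the components $(\tmu,\teta)$ are exactly those of $q_*Lie(\om)$, which is the claimed statement.

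As for where the difficulty lies, essentially none of it is in the present theorem: the only nontrivial input is the passage from infinitesimal to global constant rank in Proposition \ref{prop:ctrankfrominfinit}, whose proof transports kernel elements along a broken path of right-invariant flows using the IM equations. The one point I would still verify explicitly is that the multiplicativity of $\q_*\om$ (implicit in ``integrates'') is legitimate; this is guaranteed by Lemma \ref{lem:quotommult} together with $\q$-basicness. Thus the theorem reduces to a clean corollary of the machinery developed in Sections \ref{subsec:general}, \ref{subsec:ctrkA}, and the earlier parts of Section \ref{sec:mult}.
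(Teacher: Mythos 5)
Your proposal is correct and follows exactly the route the paper intends: the theorem is stated as a summary of the preceding discussion, and its proof is precisely the assembly of Proposition \ref{prop:ctrankfrominfinit} (plus the Cartan-formula involutivity argument of Section \ref{subsec:general}) for the first two assertions, and Proposition \ref{prop:Liequotcomm} together with Lemmas \ref{lem:Liekerred} and \ref{lem:quotommult} for the identification of the quotient data. Your closing remark correctly locates the only substantive input in Proposition \ref{prop:ctrankfrominfinit}; the rest is bookkeeping, as in the paper.
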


\section{Quotients of geometric structures}\label{sec:Applications}

It is well known that Poisson and Dirac structures can be seen as a Lie algebroid equipped with an IM 2-form suitably non-degenerate \cite{bcwz}. A similar description can be done in the more general framework of higher Poisson and Dirac structures \cite{BMR}. Motivated by the study of quotients of Poisson and Dirac structures, and their higher versions, in this section we study quotients of Lie algebroids endowed with morphic forms for which the corresponding form on the quotient defines the geometry of interest. The original data can be thus seen as \emph{generalized quotient data} which does not need to consist of a structure of the same type. We also use the Lie theory for morphic forms developed in Section \ref{sec:mult} to give integration schemes for the resulting quotient structures. In these schemes, the integration is obtained as a quotient of a Lie groupoid endowed with a suitable basic multiplicative form. We start by studying the case of quotients of IM 2-forms yielding a Dirac structure, e.g. a Poisson quotient. The case of higher Dirac structure is treated separately at the end of this section.

\subsection{IM 2-forms, Poisson and Dirac structures}\label{subsec:IMvsDirac}

Let $A$ be a Lie algebroid structure on a vector bundle $E\to M$ and $\chi\in\Omega^3_{cl}(M)$ a closed 3-form. Let $\eta_{\chi}:E\to \wedge^2T^*M$ be the bundle map defined by 

\begin{equation}\label{eq:etachi}
\eta_{\chi}(e):=-i_{\rho(e)}\chi, 
\end{equation}
where $\rho:E\to TM$ is the anchor map of $A$.

\begin{definition}\label{def:IMtwistednondeg}
An IM 2-form $(\mu,\eta)$ on $A$ is called:

\begin{itemize} 

\item \textbf{$\chi$-twisted} if $\eta=\eta_{\chi}$ 

\item \textbf{non-degenerate} if $\mu:E\to T^*M$ is an isomorphism

\item \textbf{weakly non-degenerate} if $rk(E)=dim(M)$ and $Ker(\mu)\cap \Ker(\rho)=0.$

\end{itemize}
\end{definition}

It is clear from the definition that every non-degenerate IM 2-form is weakly non-degenerate, but the converse is not true in general. As shown in \cite{dHO}, thinking of the anchor map $\rho:E\to TM$ as a 2-term complex of vector bundles, an IM 2-form is non-degenerate (respectively weakly non-degenerate) if and only if it defines an isomorphism (respectively a quasi-isomorphism) of 2-term complexes.
It is easy to see that in the case of a $\chi$-twisted IM 2-form, equations \eqref{eq:IMeqs} become equivalent to

\begin{eqnarray}
		i_{\rho(e_1)}\mu(e_2)& = &- i_{\rho(e_2)}\mu(e_1) \nonumber\\
		\mu([e_1,e_2])& =& L_{\rho(e_1)}\mu(e_2)-i_{\rho(e_2)}d\mu(e_1) + i_{\rho(e_2)}i_{\rho(e_1)}\chi 
		\label{eq:twistedIM}
		\end{eqnarray}

There is a close relation between IM 2-forms and Dirac structures, which we proceed to recall now. For more details, see \cite{bcwz} and the references therein. A \textbf{$\chi$-twisted Dirac structure} on $M$ is a subbundle $L\subset TM\oplus T^*M$ which is Lagrangian (i.e. $L=L^{\perp}$) with respect to the symmetric pairing $\langle (X,\alpha), (Y,\beta)\rangle=\alpha(Y)+\beta(X)$ and involutive with respect to the twisted Courant bracket 

$$\Cour{(X,\alpha),(Y,\beta)}=([X,Y],L_X\beta-i_Yd\alpha+\chi(X,Y,\cdot)).$$

There is a one-to-one correspondence between $\chi$-twisted Dirac structures on $M$ and Lie algebroids on $E\to M$ together with a $\chi$-twisted IM 2-form which is weakly non-degenerate. The correspondence is given as follows: a $\chi$-twisted and weakly non-degenerate IM 2-form $(\mu,\eta)$ on $A$ determines a Dirac structure $L\subset TM\oplus T^*M$ defined by

\begin{equation}\label{eq:diracofIM}
L:=\{(\rho(e),\mu(e)); e\in E\}.
\end{equation}

Conversely, a $\chi$-twisted Dirac structure $L\subset TM\oplus T^*M$ can be seen as a Lie algebroid on $E=L\to M$ with anchor $L\to TM$ given by the canonical projection and Lie bracket defined by the Courant bracket. In this case, there is an induced $\chi$-twisted IM 2-form $(\mu,\eta_{\chi})$ where $\mu:L\to T^*M$ is given by the cotangent projection.

An important particular case is given by Dirac structures defined as the graph of a bivector. For that, recall that any bivector $\pi\in\Gamma(\wedge^2TM)$ defines a Lagrangian subbundle $L_{\pi}:=\{(\pi(\alpha,\cdot),\alpha);\alpha\in T^*M\}$ called the graph of $\pi$. If $L_{\pi}$ is involutive with respect to the $\chi$-twisted Courant bracket, then $\pi$ is called a \textbf{$\chi$-twisted Poisson structure}. In the case when $\chi=0$, the bivector $\pi$ is a usual Poisson structure on $M$.

The \textbf{kernel} of a Dirac structure $L$ is defined by $Ker(L)=L\cap TM$. It is well-known that $L$ is given by the graph of a bivector if and only if $Ker(L)=0$. Let $L$ be a $\chi$-twisted Dirac structure on $M$, so it is defined by an IM 2-form $(\mu,\eta_{\chi})$ as in \eqref{eq:diracofIM}. One has that $Ker(L)=\rho(Ker(\mu))$ and we conclude that $Ker(L)=0$ if and only if $\mu:E\to T^*M$ is an isomorphism. In other words, Poisson structures seen as Dirac structures correspond to non-degenerate IM 2-forms. The correspondence is given by

\begin{equation}\label{eq:pifrommu}
\pi^{\sharp}(\alpha)=\rho(\mu^{-1}(\alpha)), \ \alpha\in T^*M.
\end{equation}
Additionally, when $(\mu,\eta_{\chi})$ is $\chi$-twisted, then the bivector \eqref{eq:pifrommu} is a $\chi$-twisted Poisson structure.

\subsection{Twisted quotients of IM $2$-forms}\label{subsect:twistedquotientIM}

In this subsection we characterize basic IM $2$-forms whose quotients are twisted. The corresponding notion of non-degeneracy of a quotient IM 2-form will be treated in subsection \ref{subsec:applications}.

Let us consider $q:E\to \tE$ a simple quotient vector bundles. Let $A$ be a Lie algebroid structure on $E$ which is $q$-basic, hence it yields a quotient Lie algebroid structure $\tA$ on $\tE$. The next result is an immediate consequence of the characterization of basic linear forms of Proposition \ref{prop:linomqbasic}.

\begin{proposition}\label{prop:twistedquotientIM}

Let $\tchi\in\Omega^3_{cl}(\tM)$ be a closed 3-form and $(\mu,\eta)$ a $q$-basic IM $2$-form on $A$ with associated quotient IM 2-form $(\tmu,\teta)$ on $\tilde A$. Then $(\tmu,\teta)$ is $\tchi$-twisted if and only if $(\mu,\eta)$ is $\chi$-twisted with $\chi:=q^*_M\tchi$. In particular, $\teta=0$ iff $\eta=0$.
\end{proposition}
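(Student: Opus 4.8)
The plan is to reduce the statement to a pointwise comparison of the ``$\eta$-parts'' on the two levels, transporting information between $A$ and $\tA$ through the pullback $q_M^*$. Recall that being $\chi$-twisted means precisely $\eta = \eta_\chi$, where $\eta_\chi(e) = -i_{\rho(e)}\chi$, and likewise upstairs $\teta = \eta_{\tchi}$ with $\eta_{\tchi}(\tilde e) = -i_{\trho(\tilde e)}\tchi$. Since the quotient component $\teta$ is characterized by \eqref{eq:tildemuetadef}, which pointwise reads $\eta(e) = q_M^*\bigl(\teta(q(e))\bigr)$ for all $e\in E$, the whole proposition will follow once I relate $\eta_\chi$ and $\eta_{\tchi}$ by the same pullback formula.

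The key step, and really the only substantive one, is to establish the intertwining identity
$$ \eta_\chi(e) = q_M^*\bigl(\eta_{\tchi}(q(e))\bigr), \qquad e\in E, \quad \text{whenever } \chi = q_M^*\tchi.$$
To prove this I would combine the anchor compatibility from \eqref{eq:reducedanchor}, namely $\trho(q(e)) = T_xq_M(\rho(e))$ for $e\in E|_x$, with the naturality of contraction under the submersion $q_M$: for the basic form $\chi = q_M^*\tchi$ one has $i_{\rho(e)}q_M^*\tchi = q_M^*\bigl(i_{T q_M(\rho(e))}\tchi\bigr)$, which is an immediate pointwise check obtained by evaluating both sides on vectors $v_1,\dots,v_k$ and using that $(q_M^*\tchi)|_x$ feeds its arguments through $T_xq_M$. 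Substituting the anchor relation and multiplying by $-1$ then yields the identity.

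With this identity in hand the equivalence is formal. If $(\mu,\eta)$ is $\chi$-twisted with $\chi = q_M^*\tchi$, then $q_M^*\bigl(\teta(q(e))\bigr) = \eta(e) = \eta_\chi(e) = q_M^*\bigl(\eta_{\tchi}(q(e))\bigr)$ for all $e$; since $q_M$ is a surjective submersion, $q_M^*$ is injective on forms, and since $q$ is fiberwise surjective every element of $\tE$ is of the form $q(e)$, so $\teta = \eta_{\tchi}$, i.e. $(\tmu,\teta)$ is $\tchi$-twisted. The converse runs through the same two identities in the opposite order. The ``in particular'' clause is the special case $\tchi = 0$ (equivalently, it is immediate from $\eta(e) = q_M^*\bigl(\teta(q(e))\bigr)$ together with injectivity of $q_M^*$ and surjectivity of $q$). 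I do not expect a genuine obstacle: the content sits entirely in the naturality identity for the twists, and the only facts needing a word of justification --- injectivity of $q_M^*$ (from surjectivity of $T q_M$) and fiberwise surjectivity of $q$ --- are both built into the notion of a simple quotient.
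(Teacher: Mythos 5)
Your proof is correct and matches the paper's intent: the paper offers no written proof, stating only that the proposition is an immediate consequence of Proposition \ref{prop:linomqbasic}, and your argument is exactly the unpacking of that claim, combining the pointwise characterization $\eta(e)=q_M^*\bigl(\teta(q(e))\bigr)$ from \eqref{eq:tildemuetadef} with the anchor compatibility \eqref{eq:reducedanchor} and the naturality of contraction under pullback. The two facts you flag --- pointwise injectivity of $q_M^*$ (from $Tq_M$ surjective) and fiberwise surjectivity of $q$ --- are indeed the only justifications needed, and both are built into the definition of a simple quotient, so there is no gap.
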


In Proposition \ref{prop:linomqbasic} we gave a characterization of basic linear $k$-forms in terms of their components. In the special case of $2$-forms, there is a simpler characterization which we proceed to explain now. For that, we consider a simple quotient $q:A\to \tA$ with components $(q_M:M\to \tM,K,\Delta)$. Suppose that $(\mu,\eta)$ is an IM 2-form on $A$ which is $q$-basic and induces $(\tmu,\teta)$ on $\tA$. Then, Proposition \ref{prop:linomqbasic} guarantees that the following conditions hold:

\begin{itemize}

\item[i)] $K\subseteq Ker(\mu)$ and $K\subseteq Ker(\eta)$
\item[ii)] for every invariant section $e\in \Gamma(E)$ the differential forms $\mu(e), \eta(e)$ are $q_M$-basic

\end{itemize}

As a consequence of i) and ii) above, there is a well defined bundle map 

$$\overline\mu:E/K\to Ann(Ker(Tq_M)); \ e\cdot mod(K)\mapsto \mu(e).$$ 
This map is equivariant with respect to the representations $\Delta$ and $\Delta^{\nu^*}$ of $M\times_{\F(q_M)} M$. See Example \ref{ex:conormalquot} for the definition of
the simple quotient $q_M^{\nu^*}:Ann(Ker(Tq_M))\to T^*\tM$ induced by $q_M$.

\begin{remark}\label{rmk:twistKeta} Notice that for a \emph{not necessarily basic} IM 2-form $(\mu,\eta)$ on $A$ which is twisted by $q^*_M\tchi$ for some $\tchi \in \Omega^3_{cl}(\tM)$, the conditions involving $\eta$ in i) and ii) above, hold automatically. This follows from the definition of $\eta=\eta_{q^*_M\tchi}$ combined with the fact that $\rho(k)\in \mathfrak{X}(M)$ is projectable to zero for every $k\in \Gamma(K)$.

\end{remark}

The next proposition gives a natural characterization of basic IM 2-forms whose quotient is a \emph{twisted} IM 2-form.

\begin{proposition}\label{lma:thequotistwist}
	Let $q:E\to \tE$ be a simple quotient with components $(q_M:M\to \tM,K,\Delta)$, $A$ a $q$-basic Lie algebroid structure on $E$ and $(\mu,\eta)$ a IM $2$-form on $A$ twisted by $\chi:=q^*_M\tchi$ for some $\tchi \in\Omega^3_{cl}(\tM)$. The following are equivalent:

\begin{enumerate}

\item $(\mu,\eta)$ is $q$-basic

\item $K\subset Ker(\mu)$, $Im(\mu)\subset Ann(Ker(Tq_M))$ and the induced map $\overline{\mu}:E/K\to  Ann(Ker(Tq_M))$ is equivariant. 

\end{enumerate}

\end{proposition}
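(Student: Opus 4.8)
The plan is to reduce everything to the characterization of basic linear forms in Proposition \ref{prop:linomqbasic} and then re-read its conditions on $\mu$ through the induced bundle map $\overline{\mu}$. First I would invoke Proposition \ref{prop:linomqbasic}, which states that $(\mu,\eta)$ is $q$-basic iff (a) $K\subset Ker(\mu)$ and $K\subset Ker(\eta)$, and (b) for every $q$-invariant section $e$ both $\mu(e)\in\Omega^1(M)$ and $\eta(e)\in\Omega^2(M)$ are $q_M$-basic. Since $(\mu,\eta)$ is twisted by $\chi=q_M^*\tchi$, Remark \ref{rmk:twistKeta} shows that the two conditions involving $\eta$ hold automatically. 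Hence $(\mu,\eta)$ is $q$-basic if and only if $K\subset Ker(\mu)$ and $\mu(e)$ is $q_M$-basic for every $q$-invariant $e$. This isolates the problem to the single bundle map $\mu$, and the clause $K\subset Ker(\mu)$ of statement (2) is matched verbatim.

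Next I would unpack ``$\mu(e)$ is $q_M$-basic'' into its horizontal and invariant parts. The horizontal part, $i_V\mu(e)=0$ for $V\in Ker(Tq_M)$, says precisely that $\mu(e)$ takes values in $\mathrm{Ann}(Ker(Tq_M))$. Because $q$-invariant sections locally span $E$ (Remark \ref{rmk:localinvarbasis}) and $\mu$ is $C^\infty(M)$-linear, requiring this for all invariant $e$ is equivalent to the pointwise condition $Im(\mu)\subset\mathrm{Ann}(Ker(Tq_M))$, which is the second clause of (2). Combined with $K\subset Ker(\mu)$, this is exactly what is needed for the induced map $\overline{\mu}:E/K\to\mathrm{Ann}(Ker(Tq_M))$ to be well defined.

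The remaining content is to identify the invariant part of $q_M$-basicness with equivariance of $\overline{\mu}$. Here I would use Example \ref{ex:conormalquot}: a section of $\mathrm{Ann}(Ker(Tq_M))$ is $\Delta^{\nu^*}$-invariant precisely when it is of the form $q_M^*\tilde\alpha$, i.e. precisely when it is $q_M$-basic. By definition, $e$ is $q$-invariant iff $\overline{e}:=e\ mod\ K$ is a $\Delta$-invariant section of $E/K$. Therefore, once $\overline{\mu}$ is defined, the statement ``$\mu(e)$ is $q_M$-basic for every $q$-invariant $e$'' says exactly that $\overline{\mu}$ carries $\Delta$-invariant sections to $\Delta^{\nu^*}$-invariant sections; since the $q_M$-fibers are connected, this is equivalent to genuine $\Delta$-$\Delta^{\nu^*}$-equivariance $\overline{\mu}\circ\Delta_{x,y}=\Delta^{\nu^*}_{x,y}\circ\overline{\mu}$, exactly the global-versus-infinitesimal passage recorded in Lemma \ref{lma:basicmorphvb}(3). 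Assembling the three equivalences yields (1) $\iff$ (2).

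The step I expect to be the main obstacle is making this last translation precise: passing from the analytic condition $L_V\mu(e)=0$ along the fibers to the algebraic equivariance of $\overline{\mu}$. This rests on two ingredients, namely (i) the identification of $\Delta^{\nu^*}$-invariant sections of $\mathrm{Ann}(Ker(Tq_M))$ with pullback one-forms through Example \ref{ex:conormalquot}, and (ii) the connectedness of the $q_M$-fibers, which guarantees that preserving invariant sections is equivalent to full equivariance and not merely to a flat-connection statement. Everything else is a direct matching of the hypotheses of Proposition \ref{prop:linomqbasic} against the items of (2).
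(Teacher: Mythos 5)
Your proposal is correct and takes essentially the same route as the paper's own proof: reduce to the characterization in Proposition \ref{prop:linomqbasic}, observe via Remark \ref{rmk:twistKeta} that the $\eta$-conditions hold automatically for a $q_M^*\tchi$-twisted form, and translate ``$\mu(e)$ is $q_M$-basic for every $q$-invariant $e$'' into equivariance of $\overline{\mu}$ using the identification of $\Delta^{\nu^*}$-invariant sections of $\mathrm{Ann}(Ker(Tq_M))$ with pullback $1$-forms from Example \ref{ex:conormalquot}. The paper only writes out the implication $(2)\Rightarrow(1)$, the converse being the discussion preceding the statement, so your version is just a more explicit rendering of the same argument.
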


\begin{proof}
We just need to show that \emph{2}. implies \emph{1}. In order to see that $(\mu,\eta)$ is basic we check the conditions as in Proposition \ref{prop:linomqbasic}. It follows from Remark \ref{rmk:twistKeta} that $K\subseteq Ker(\eta)$ and $\eta(e)$ is $q_M$-basic for every invariant section $e\in\Gamma(E)$. The equivariance of $\overline{\mu}:E/K\to  Ann(Ker(Tq_M))$ implies that $\mu(e)$ is $q_M$-basic for every invariant section $e\in\Gamma(E)$. 

\end{proof}


\subsubsection*{Kernel-reducible twisted IM $2$-forms}

We now investigate kernel-reducibility of morphic $2$-forms. As a motivation to have in mind, the quotient of a linear $2$-form by its kernel yields a linear $2$-form which is non-degenerate. Thus, when the form is also morphic, this process produces twisted bivectors as an outcome. 
These twisted quotient bivectors will be studied in detail in the following subsection.

We begin by showing that, in the case of linear $2$-forms, there is a simplification of the charaterization of the constant rank condition. We think that this result could be of independent interest.
For $2$-forms, $\mus$ coincides with the transpose of $\mu$,
$$ \mus = \mu^t: TM \to E^*.$$
The inclusion $Im(\mu)\subset Ann(Ker(\mu^\sharp))$ holds for any linear $k$-form. In the case $k=2$, using $dim(Im(\mu|_x))=dim(Im(\mu^t|_x))$, the inclusion becomes an equality by dimension count,
\begin{equation}\label{eq:Immux2}
Im(\mu|_x) = Ann(Ker(\mus|_x)), \ \forall x\in M.
\end{equation}

\begin{lemma}\label{lma:kerred2f}
	Let $\om$ be a linear $2$-form on $E\to M$ with components $(\mu,\eta)$ and assume $Ker(\om)\subset Ker(d\om)$. Then, $Ker(\om)$ has constant rank iff $Ker(\mu)$ has constant rank and $Ker(\mu^\sharp)\subset TM$ is integrable as a distribution.
\end{lemma}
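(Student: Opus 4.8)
The plan is to deduce the statement from the characterization of the constant-rank condition already obtained in Lemma \ref{lem:keromlin}, whose items \emph{1.}, \emph{2.}, \emph{3.} are equivalent to $Ker(\om)$ having constant rank. Since $\mu$ is a vector bundle morphism over $id_M$, rank-nullity shows that $\mu$ has constant rank if and only if $Ker(\mu)$ does, so item \emph{1.} is just the constant rank of $Ker(\mu)$. Likewise, an integrable distribution is in particular regular, so the integrability of $Ker(\mu^\sharp)$ forces $\mu^\sharp$ to have constant rank, which is item \emph{2.} Thus the entire content of the lemma is that, under the standing hypothesis $Ker(\om)\subset Ker(d\om)$, item \emph{3.} is equivalent to the involutivity of $Ker(\mu^\sharp)$ once the relevant kernels are regular.

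For the forward implication I would simply invoke Lemma \ref{lem:keromlin}: if $Ker(\om)$ has constant rank then \emph{1.} and \emph{2.} hold, giving the constant rank of $Ker(\mu)$ and $Ker(\mu^\sharp)$; and since moreover $Ker(\om)\subset Ker(d\om)$, item \emph{5.} applies, so $Ker(\mu^\sharp)$ is involutive, hence integrable by Frobenius.

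The converse is the substantive direction. Assuming $Ker(\mu)$ has constant rank and $Ker(\mu^\sharp)$ is integrable, items \emph{1.} and \emph{2.} hold as noted, and it remains to verify item \emph{3.}: that $i_v(d\mu(e)+\eta(e))\in Im(\mu)$ for all $v\in\Gamma(Ker(\mu^\sharp))$ and $e\in\Gamma(E)$. This is where the hypothesis $k=2$ enters, through the dimension identity \eqref{eq:Immux2}, $Im(\mu|_x)=Ann(Ker(\mu^\sharp|_x))$, which turns \emph{3.} into the requirement that $(d\mu(e)+\eta(e))(v,w)=0$ for all $v,w\in Ker(\mu^\sharp)$ and all $e$. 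First I would harvest the piece of item \emph{4.} that I need directly from the hypothesis, without appealing to constant rank: restricting $Ker(\om)\subset Ker(d\om)$ to the zero section and using the splitting \eqref{eq:Keromzero} together with its analogue for $d\om$ (whose components are $(\eta,0)$, so that $Ker(d\om|_{0_x})\simeq Ker(\eta)\oplus Ker(\eta^\sharp)$) yields $Ker(\mu^\sharp)\subset Ker(\eta^\sharp)$. Consequently $\eta(e)(v,w)=0$ whenever $v\in Ker(\mu^\sharp)$. The remaining term is handled by integrability: extending $v,w$ to local sections of the regular distribution $Ker(\mu^\sharp)$ and applying Cartan's formula to the $1$-form $\mu(e)$,
$$ d\mu(e)(v,w)=v\big(\mu(e)(w)\big)-w\big(\mu(e)(v)\big)-\mu(e)([v,w]),$$
the first two terms vanish because $\mu(e)(v)=\mu^\sharp(v)(e)=0$ and similarly for $w$, while involutivity gives $[v,w]\in Ker(\mu^\sharp)$, so the last term is $\mu^\sharp([v,w])(e)=0$. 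Hence $(d\mu(e)+\eta(e))(v,w)=0$, establishing \emph{3.}, and Lemma \ref{lem:keromlin} then gives that $Ker(\om)$ has constant rank.

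I expect the only real obstacle to be the verification of item \emph{3.} in the converse; everything hinges on the two $k=2$ specific observations that $Im(\mu)=Ann(Ker(\mu^\sharp))$ and that $Ker(\mu^\sharp)\subset Ker(\eta^\sharp)$ comes for free from the zero-section restriction of the standing hypothesis, after which the computation collapses to the one-line Cartan identity above. Notably the finer conditions \emph{6.} and \emph{7.} of Lemma \ref{lem:keromlin} play no role here, reflecting that for $2$-forms they are automatically subsumed once $Ker(\mu^\sharp)$ is known to be involutive.
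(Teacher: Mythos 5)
Your proposal is correct and follows essentially the same route as the paper's proof: both directions reduce to items \emph{1.--3.} of Lemma \ref{lem:keromlin}, using the pointwise identity \eqref{eq:Immux2} together with the observation that $Ker(\mu^\sharp)\subset Ker(\eta^\sharp)$ follows from $Ker(\om)\subset Ker(d\om)$ restricted to the zero section, with no constant-rank assumption. The only cosmetic difference is that you verify the residual condition $i_v d\mu(e)\in \mathrm{Ann}(Ker(\mu^\sharp))$ invariantly via the Cartan formula and involutivity, whereas the paper performs the identical vanishing computation in a foliated chart adapted to $Ker(\mu^\sharp)$, where $\mu(e)$ is a combination of exact basic $1$-forms.
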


\begin{proof}
	The implication $\Rightarrow$ follows directly from Lemma \ref{lem:keromlin}. For the converse, we first notice that since $\mu^\sharp=\mu^t$, then $\mu$ has constant rank if and only if $\mu^\sharp$ has constant rank. Also, by eq. \eqref{eq:Immux2}, condition (3.) in Lemma \ref{lem:keromlin} reads:
	$$ \text{(3'.): }i_v d \mu(e) \in Ann(Ker(\mu^\sharp)), \ \forall v\in \Gamma(Ker(\mu^\sharp)), \ e \in \Gamma(E),$$
	where we have used that $Ker(\mus)\subset Ker(\eta^\sharp)$ as a consequence of the hypothesis $Ker(\om)\subset Ker(d\om)$. The implication $Ker(\om)\subset Ker(d\om) \Rightarrow Ker(\mus)\subset Ker(\eta^\sharp)$ follows without assuming that $Ker(\om)$ has constant rank, see eq. \eqref{eq:Vkercoord} at $u=0$.
	We argue that (3'.) above automatically holds when $Ker(\mu^\sharp)$ is integrable. The key point is that, around each point in $M$, we can take an adapted chart $U=F\times \tU$ such that the distribution is given by the fibers of the projection $U \to \tU$, and thus $\Gamma_U Ann(Ker(\mus))$ is locally given by $C^\infty(U)$-linear combinations of exact $1$-forms $d\tilde x^k$, with $\tilde x^k$ coordinates in $\tU$. Since $\mu(e)\in \Gamma_U Ann(Ker(\mus))$ for any section $e$, one verifies that $i_{v_2}i_{v_1}d\mu(e) =0$ for $v_1,v_2 \in Ker(\mus)$, and thus $i_{v_1}d\mu(e) \in Ann(Ker(\mus))$ as wanted. We thus have proven (1.,2.,3.) of Lemma \ref{lem:keromlin}, implying that $Ker(\om)$ has constant rank.
\end{proof}

We now summarize the conditions characterizing when a kernel-reducible IM $2$-form gives rise to a twisted-closed form on the quotient. The following is a direct consequence Corollary \ref{cor:linomkerred} and the previous results of this subsection. First observe that if $\mu$ has constant rank and $Ker(\mus)=Ker(Tq_M)$ for a simple quotient $q_M:M\to \tM$, then by \eqref{eq:Immux2} the induced map
$$ \overline{\mu}: E/Ker(\mu) \to Ann(Ker(Tq_M)) \text{ is an isomorphism over $id_M$.}$$
This isomorphism can be used to define a representation $\Delta$ of $M\times_{\F(q_M)} M$ on $E/Ker(\mu)$ so that $\overline{\mu}$ is equivariant with respect to the conormal representation $\Delta^{\nu^*}$ on $Ann(Ker(Tq_M))$ (see Example \ref{ex:conormalquot}). Then $(q_M,Ker(\mu),\Delta)$ define the components of a simple quotient $q$ of $E\to M$, and $\mu$ descends to a map
$$ \tmu: q_*E \to T^*\tM$$
which is an isomorphism.

\begin{corollary}\label{cor:kerred2}
	Let $A$ be a Lie algebroid on $E\to M$ and $(\mu,\eta)$ be the components of a linear $2$-form on $E$. Then, $(\mu,\eta)$ defines a kernel-reducible IM $2$-form on $A$ whose quotient is twisted iff the following conditions hold:
	\begin{enumerate}
		\item $\mu$ has constant rank
		\item the regular distribution defined by $Ker(\mu^t)\subset TM$ is integrable and defines a simple quotient $q_M:M\to \tM$ onto its leaf space
		\item there exists a closed 3-form $\tchi\in \Omega^3_{cl}(\tM)$ such that $\eta=\eta_{q_M^*\tchi}$ 
	\end{enumerate}
	In such a case, the quotient component $\tmu:\tE\to T^*\tM$ is an isomorphism.
\end{corollary}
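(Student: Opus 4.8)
The plan is to derive the equivalence by specializing Corollary \ref{cor:linomkerred}, the general component criterion for kernel-reducibility of linear forms, to the case $k=2$ and merging it with the twisted-specific results of this subsection. Throughout I take $(\mu,\eta)$ to be an IM $2$-form, and I use the two features special to $2$-forms: $\mus=\mu^t$, and the equality $Im(\mu)=Ann(Ker(\mus))$ from \eqref{eq:Immux2}.

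For the implication $(\Rightarrow)$, assume $(\mu,\eta)$ is a kernel-reducible IM $2$-form whose quotient is twisted. Corollary \ref{cor:linomkerred} then yields in particular that $\mu$ has constant rank, which is (1), and that there is a simple quotient $q_M:M\to\tM$ with $Ker(\mus)=Ker(Tq_M)$; since $\mus=\mu^t$ this says exactly that $Ker(\mu^t)$ is integrable and defines the simple quotient $q_M$ onto its leaf space, i.e.\ (2). For (3), the hypothesis that the quotient $(\tmu,\teta)$ is $\tchi$-twisted feeds directly into Proposition \ref{prop:twistedquotientIM}, which identifies this with $(\mu,\eta)$ being $q_M^*\tchi$-twisted; hence $\eta=\eta_{q_M^*\tchi}$, which is (3).

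For the converse $(\Leftarrow)$, I would assume (1)--(3) and verify the five conditions of Corollary \ref{cor:linomkerred}, whereupon kernel-reducibility follows and Proposition \ref{prop:twistedquotientIM} again gives that the quotient is $\tchi$-twisted. Conditions (1) and (2) of the corollary are immediate from (1),(2) here via $\mus=\mu^t$. All the conditions involving $\eta$ there — the inclusions $Ker(\mu)\subset Ker(\eta)$ and $Ker(Tq_M)\subset Ker(\eta^\sharp)$, and the implication ``$\bna_v\bar e=0\Rightarrow i_vd\eta(e)=0$'' — follow from $\eta=\eta_{q_M^*\tchi}$ and the closedness of $\tchi$: by Remark \ref{rmk:twistKeta} the $\eta$-part of basicity is automatic, so that $\eta(e)$ is $q_M$-basic for invariant $e$; since $q_M$-basicity means $i_v\eta(e)=0$ and $L_v\eta(e)=0$ for $v\in Ker(Tq_M)$, and $L_v\eta(e)=i_vd\eta(e)$ once $i_v\eta(e)=0$, the required vanishing $i_vd\eta(e)=0$ follows. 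The one remaining condition, the existence of a representation $\Delta$ of $M\times_{\F(q_M)}M$ on $E/Ker(\mu)$ with $\mu(\bna_v\bar e)=i_vd\mu(e)$, is where the $2$-form structure does the real work: by \eqref{eq:Immux2} and (1), the induced map $\bar\mu:E/Ker(\mu)\to Ann(Ker(Tq_M))$ is a fiberwise isomorphism, so I would define $\Delta$ by transporting the conormal representation $\Delta^{\nu^*}$ of Example \ref{ex:conormalquot} along $\bar\mu$. One then checks the defining relation: the transported connection satisfies $\bar\mu(\bna_v\bar e)=\bna^{\nu^*}_v\mu(e)=L_v\mu(e)$, and because $i_v\mu(e)=\mus(v)(e)=0$ for $v\in Ker(\mus)=Ker(Tq_M)$, Cartan's formula gives $L_v\mu(e)=i_vd\mu(e)$, as needed.

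The main obstacle is exactly this last point. In the general Corollary \ref{cor:linomkerred} the existence of a representation $\Delta$ integrating the flat connection $\bna$ is a genuine hypothesis that may fail; here it is free, because $\bna$ is identified through the isomorphism $\bar\mu$ with the conormal connection $\bna^{\nu^*}$, which always integrates to $\Delta^{\nu^*}$ for the submersion $q_M$ provided by (2). The only care needed is the identity $L_v\mu(e)=i_vd\mu(e)$, which simultaneously secures the well-definedness of $\bna$ (that is, $i_vd\mu(e)\in Im(\mu)$, using integrability of $Ker(\mu^t)$ as in Lemma \ref{lma:kerred2f}) and its agreement with the transported conormal connection. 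Finally, that $\tmu:\tE\to T^*\tM$ is an isomorphism is immediate from \eqref{eq:Immux2}: $\bar\mu$ is a fiberwise isomorphism onto $Ann(Ker(Tq_M))\cong q_M^*T^*\tM$, and this descends to $\tmu$ on the quotient.
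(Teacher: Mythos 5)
Your overall architecture coincides with the paper's implicit proof: specialize Corollary \ref{cor:linomkerred} to $k=2$, use $\mus=\mu^t$ and \eqref{eq:Immux2} to see that $\overline{\mu}:E/Ker(\mu)\to \mathrm{Ann}(Ker(Tq_M))$ is a fiberwise isomorphism, transport the conormal representation $\Delta^{\nu^*}$ of Example \ref{ex:conormalquot} along $\overline{\mu}$ so that the otherwise nontrivial hypothesis ``$\bna$ integrates to a representation'' comes for free, and invoke Proposition \ref{prop:twistedquotientIM} for the twist. The forward direction, the identity $\mu(\bna_v\bar e)=i_vd\mu(e)=L_v\mu(e)$ with well-definedness secured by the integrability argument of Lemma \ref{lma:kerred2f}, and the final claim that $\tmu$ is an isomorphism are all correct and match the construction in the paragraph preceding the corollary in the paper.

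There is, however, a genuine gap in your converse, at condition (5) of Corollary \ref{cor:linomkerred}. You justify ``$\bna_v\bar e=0\Rightarrow i_vd\eta(e)=0$'' by citing Remark \ref{rmk:twistKeta} to get that $\eta(e)$ is $q_M$-basic for invariant $e$. But twistedness and $d\tchi=0$ only give horizontality automatically ($i_v\eta(e)=-i_vi_{\rho(e)}q_M^*\tchi=0$); for invariance one computes, using $d\eta(e)=-L_{\rho(e)}q_M^*\tchi$, that $i_vd\eta(e)=L_v\eta(e)=i_{[\rho(e),v]}q_M^*\tchi$ for $v\in\Gamma(Ker(Tq_M))$, and this vanishes only if $\rho(e)$ is $q_M$-projectable (in an adapted chart, an anchor with $F$-dependent $\tilde x$-components breaks it). Remark \ref{rmk:twistKeta} supplies that projectability from the standing hypothesis of its subsection that $A$ is $q$-basic (via Proposition \ref{prop:Abasic}); in the converse of the corollary, $q$-basicity of $A$ is precisely what is not yet available --- it is the conclusion of Theorem \ref{thm:omActrk}, which presupposes kernel-reducibility, i.e.\ the very condition you are verifying. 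So your citation is circular. The missing step is to prove directly that $\bna$-flat sections have projectable anchor, by adapting item (1b.) of the proof of Theorem \ref{thm:omActrk}: apply the second IM equation \eqref{eq:IMeqs} to two flat sections $e_1,e_2$, observe that $\mu(e_j)=q_M^*\tmu(\te_j)$ and that $i_{\rho(e_2)}\eta(e_1)=-i_{\rho(e_2)}i_{\rho(e_1)}q_M^*\tchi$ lies in $C^\infty(M)\otimes q_M^*\Omega^1(\tM)$ \emph{without} any invariance assumption on $\eta(e_1)$, expand $[e_1,e_2]$ in a local invariant basis, and use $Ker(\tmu^\sharp)=0$ (available since $\overline{\mu}$ is an isomorphism) to conclude $d_F$ of the projected anchor vanishes. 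With projectability in hand, $i_vd\eta(e)=0$ for flat $e$, which suffices for condition (5) since the converse of Lemma \ref{lem:keromlin} only uses a flat basis. A minor instance of the same slip: your $Ker(\mu)\subset Ker(\eta)$ needs $\rho(Ker(\mu))\subset Ker(Tq_M)$, which follows from the first IM equation together with your condition (2), not from the cited remark. The corollary itself is true; the gap is in the justification, and it is localized and fixable as indicated.
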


\subsection{Applications in Poisson and Dirac geometry}\label{subsec:applications}

In this subsection we apply our results on quotients of IM 2-forms to reduced spaces in Poisson and Dirac geometry. We also describe the corresponding integrations in the sense of \cite{bcwz}.

\subsubsection*{Integration of twisted quotient IM 2-forms}

Let $\chi\in\Omega^3(M)$ be a closed 3-form on a manifold $M$. A \textbf{twisted presymplectic groupoid} over $M$ \cite{bcwz} is a Lie groupoid $G\rra M$ with $dim(G)=2dim(M)$ together with a multiplicative 2-form $\omega\in\Omega^2(G)$ with $d\omega=s^*\chi-\tau^*\chi$ and satisfying the following non-degeneracy condition

\begin{equation}\label{eq:presympnondegenerate}
Ker(Ts(x))\cap Ker(T\tau(x))\cap Ker(\omega_x)=\{0\}; \ \text{for every } x\in M.
\end{equation}

In this case, as shown in \cite{bcwz}, the associated IM 2-form $(\mu,\eta)$ as in \eqref{eq:muetfrommult} is $\chi$-twisted and weakly non-degenerate as in Definition \ref{def:IMtwistednondeg}. In particular, there is an induced Dirac structure $L$ over M \eqref{eq:diracofIM}, which is isomorphic to the Lie algebroid $A$ of $G$ under the map $(\rho,\mu):A\to TM\oplus T^*M$. In the special case when $\omega$ is symplectic, so that $(G,\om)$ defines a \textbf{symplectic groupoid}, the IM 2-form $(\mu,\eta)$ is non-degenerate with $\eta=0$ and hence the induced Dirac structure on $M$ is given by the graph of a Poisson bivector.

Conversely, by \cite[Prop. 3.5 (iii)]{bcwz}, if $\om$ is a multiplicative $2$-form on a Lie groupoid $G\rra M$ such that its infinitesimal components $(\mu,\eta)$ define a $\chi$-twisted IM 2-form, then $d\omega=s^*\chi-\tau^*\chi$. Moreover, by \cite[Cor. 4.8]{bcwz}, if $(\mu,\eta)$ is weakly non-degenerate (resp. non-degenerate) then $(G,\om)$ defines a twisted presymplectic (resp. symplectic) groupoid. Given the infinitesimal data $(A,\mu,\eta)$, when $A$ is integrable, one can always integrate the IM form to a multiplicative form $\om$ on the source-simply-connected integration $G(A)\rra M$ of $A$ (see \cite{bursztyncabrera,bco,bcwz}).

Finally, we study the Lie theory of quotients of morphic $2$-forms. For that, we consider a simple quotient of Lie groupoids $\q:G\to \tG$ inducing a simple quotient of Lie algebroids $q:A\to\tA$. We assume that $G$ is a Lie groupoid with connected source fibers. Let $\omega\in\Omega^2(G)$ be a multiplicative 2-form integrating an IM 2-form $(\mu,\eta)$ on $A$. It follows from Proposition \ref{prop:twistedquotientIM} that, if $(\mu,\eta)$ is $q$-basic inducing a quotient IM 2-form $(\tmu,\teta)$ which is $\tchi$-twisted, then $(\mu,\eta)$ is $\chi$-twisted where $\chi=q^*_M\tchi$. Since $(\mu,\eta)$ integrates to $\omega$ then 

$$d\omega=s^*(q^*_M\tchi)-\tau^*(q^*_M\tchi).$$

Since $(\mu,\eta)$ is $q$-basic and $G$ is source-connected, by Prop. \ref{prop:omqbasicfrominfi}, the multiplicative 2-form $\omega\in\Omega^2(G)$ is basic with respect to $\q:G\to \tG$. Hence $\omega=\q^*\tom$ for a unique multiplicative 2-form $\tom\in\Omega^2(\tG)$ which is an integration of the $\tchi$-twisted IM 2-form $(\tmu,\teta)$.  We observe that
$$d\tom=s^*\tchi-\tau^*\tchi.$$

In this case, if $(\tmu,\teta)$ is weakly non-degenerate, then $(\tG,\tom)$ is a presymplectic groupoid over $M$ integrating the Dirac structure $\tilde L$ given by the image of $(\tilde \rho,\tmu):\tA\to T\tilde M \oplus T^*\tilde M$. In particular, if $(\tmu,\teta)$ is non-degenerate, then $(\tG,\tom)$ is a twisted \emph{symplectic} groupoid integrating the twisted Poisson bivector $\tilde{\pi}^{\sharp}:=\tilde\rho\circ \tilde{\mu}^{-1}$. An instance of the previous situation is presented in the next example.

\begin{example}\label{ex:integrationkernelreducible}
Let $(\mu,\eta)$ be a kernel-reducible IM 2-form with associated simple quotient $q=q_{Ker(Lie(\om))}$. In this case $\tmu$ is an isomorphism and hence induces a $\tchi$-twisted Poisson structure $\tpi$ on $\tM$. Further, if $\om$ is kernel-reducible inducing a simple quotient $\q:G\to\tG$, then $(\tG, \q_*\om)$  defines a twisted symplectic groupoid integrating the twisted Poisson manifold $(\tM,\tpi,\tchi)$. The kernel reducibility condition for $(G,\om)$ holds amounts to, when $G$ is source-connected so that the distribution $Ker(\om)\subset TG$ integrates to a regular foliation $\F\subset G$ by Thm. \ref{thm:Gomkerred}, the quotient map $\q:G\to G/\F$ defining a simple quotient of $G\rra M$.
\end{example}	


\subsubsection*{Quotient Dirac structures}

In what follows we study quotient IM 2-forms which give rise to a twisted Dirac structure.

For that, we consider a simple quotient of Lie algebroids $q:A\to \tA$ with components $(q_M:M\to \tM,K,\Delta)$ and a $q$-basic IM 2-form $(\mu,\eta)$ on $A$. It follows from Proposition  \ref{prop:twistedquotientIM} that $(\tmu,\teta)$ is $\tchi$-twisted for some $\tchi\in\Omega^3_{cl}(\tilde M)$ if, and only if, $(\mu,\eta)$ is $\chi$-twisted with $\chi= q^*_M\tchi$. Regarding non-degeneracy properties, the IM 2-form $(\tmu,\teta)$ is weakly non-degenerate if and only if the following holds:

	\begin{equation}\label{eq:quotientwnd}
			rk(K)=rk(E)-dim(\tM) \text{ and } K=Ker(\mu)\cap \rho^{-1}(Ker(Tq_M)).
	\end{equation}

In particular, $(\tmu,\teta)$ is non-degenerate if and only if 

\begin{equation}\label{eq:quotientnd}
rk(K)=rk(E)-dim(\tM) \text{ and } Ker(\mu)=K.
\end{equation}

The latter is equivalent to $Ker(\mu)=K$ and $rk(Im(\mu))=dim(\tM)$, so $Im(\mu)=Ann(Ker(Tq_M))$.\\

From now on we use the following terminology.

\begin{definition}\label{def:diracquotdata}
	Consider a set of data $(A,\mu,\eta,q)$ where $A$ is a Lie algebroid on a vector bundle $E\to M$, $q$ is a simple quotient of $A$, and $(\mu,\eta)$ defines a twisted IM $2$-form on $A$. We say that this data defines {\bf Dirac quotient data} when the IM $2$-form is $q$-basic and its quotient defines a weakly non-degenerate twisted IM $2$-form. 
\end{definition}

It follows from the results of subsection \ref{subsect:twistedquotientIM}, that a Dirac quotient data is equivalent to

\begin{corollary}\label{cor:diracquot}
	Let $A$ be a Lie algebroid on $E\to M$, $(\mu,\eta)$ be the components of a linear $2$-form on $E$ and $q$ a simple quotient of $A$ with components $(q_M:M\to \tM,K,\Delta)$. Then, $(A,\mu,\eta,q)$ defines Dirac quotient data iff
	\begin{enumerate}
		\item there exists a closed $3$-form $\tchi$ on $\tM$ such that $\eta=\eta_{q_M^*\tchi}$ as in eq. \eqref{eq:etachi},
		\item $K=Ker(\mu)\cap \rho^{-1}(Ker(Tq_M))$ and $rk(K)=rk(E)-dim(\tM)$ 
		\item $Im(\mu)\subset Ann(Ker(Tq_M))$ and the induced map $\overline{\mu}:E/K \to  Ann(Ker(Tq_M))$ is equivariant for the representations $\Delta$ and $\Delta^{\nu^*}$ of $M\times_{\F(q_M)} M$ (recall $\Delta^{\nu^*}$ from Example \ref{ex:conormalquot})
	\end{enumerate}
	
	\end{corollary}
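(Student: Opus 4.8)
The plan is to unwind all three notions—$\chi$-twistedness, weak non-degeneracy, and $q$-basicness—into the component conditions already established, so that Corollary \ref{cor:diracquot} becomes a bookkeeping translation rather than a fresh argument. Let me think through what each piece needs.

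First, the twistedness part. Condition (1) of the corollary says $\eta = \eta_{q_M^*\tchi}$ for a closed $3$-form $\tchi$ on $\tM$. By Proposition \ref{prop:twistedquotientIM}, the quotient $(\tmu,\teta)$ is $\tchi$-twisted iff $(\mu,\eta)$ is $\chi$-twisted with $\chi = q_M^*\tchi$, i.e. $\eta = \eta_\chi = \eta_{q_M^*\tchi}$. So condition (1) is exactly the statement that the quotient IM $2$-form is twisted by some closed form on $\tM$.

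Second, weak non-degeneracy. By definition of ``Dirac quotient data'' the quotient $(\tmu,\teta)$ must be weakly non-degenerate, and the excerpt already records (in the displayed equation \eqref{eq:quotientwnd}) that this is equivalent to $rk(K) = rk(E) - \dim(\tM)$ together with $K = Ker(\mu) \cap \rho^{-1}(Ker(Tq_M))$. This is precisely condition (2). So the plan is to invoke \eqref{eq:quotientwnd} directly; the only thing to verify is that the rank condition $rk(\tE) = \dim(\tM)$ on the quotient bundle translates to $rk(K) = rk(E) - \dim(\tM)$, which follows since $rk(\tE) = rk(E/K) = rk(E) - rk(K)$.

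Third, $q$-basicness of the IM $2$-form. Once twistedness is assumed, Remark \ref{rmk:twistKeta} shows the $\eta$-side conditions of Proposition \ref{prop:linomqbasic} are automatic. Then Proposition \ref{lma:thequotistwist} tells us that, for a twisted IM $2$-form over a $q$-basic algebroid, $q$-basicness is equivalent to $K \subseteq Ker(\mu)$, $\im(\mu) \subseteq \mathrm{Ann}(Ker(Tq_M))$, and equivariance of the induced $\overline\mu$. The last two of these are exactly condition (3) of the corollary, and $K \subseteq Ker(\mu)$ is subsumed by condition (2). So the proof is essentially an assembly: conditions (1)–(3) collectively package $q$-basicness (via Prop.\ \ref{lma:thequotistwist}), twistedness (via Prop.\ \ref{prop:twistedquotientIM}), and weak non-degeneracy of the quotient (via \eqref{eq:quotientwnd}).

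The main obstacle I anticipate is checking the logical consistency of the three conditions as a package—specifically, confirming that condition (2)'s equality $K = Ker(\mu) \cap \rho^{-1}(Ker(Tq_M))$ is fully compatible with (and indeed implies) the hypothesis $K \subseteq Ker(\mu)$ needed by Proposition \ref{lma:thequotistwist}, and that the equivariance in (3) is the same equivariance appearing in that Proposition. I would write the proof as two implications. For the forward direction, I would assume Dirac quotient data, extract twistedness of the quotient to get (1) via Prop.\ \ref{prop:twistedquotientIM}, extract weak non-degeneracy to get (2) via \eqref{eq:quotientwnd}, and extract $q$-basicness to get (3) via Prop.\ \ref{lma:thequotistwist}. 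For the converse, I would assume (1)–(3): condition (1) makes the form $\chi$-twisted with $\chi=q_M^*\tchi$, so Remark \ref{rmk:twistKeta} handles the $\eta$-parts; conditions from (2) give $K\subseteq Ker(\mu)$, and (3) supplies the $\mu$-image and equivariance hypotheses, so Prop.\ \ref{lma:thequotistwist} yields $q$-basicness; finally \eqref{eq:quotientwnd} read with (2) gives weak non-degeneracy of the quotient, completing the data. Since every ingredient is already proved in the excerpt, I expect the write-up to be short:

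\begin{proof}
	This is a direct assembly of the preceding results. Suppose first that $(A,\mu,\eta,q)$ defines Dirac quotient data, so that $(\mu,\eta)$ is a $q$-basic twisted IM $2$-form whose quotient $(\tmu,\teta)$ is weakly non-degenerate. Since $(\tmu,\teta)$ is $\tchi$-twisted for some $\tchi\in\Omega^3_{cl}(\tM)$, Proposition \ref{prop:twistedquotientIM} gives $\eta=\eta_{q_M^*\tchi}$, which is condition (1). The weak non-degeneracy of the quotient is, by \eqref{eq:quotientwnd}, equivalent to $rk(K)=rk(E)-dim(\tM)$ together with $K=Ker(\mu)\cap \rho^{-1}(Ker(Tq_M))$, which is condition (2); in particular $K\subseteq Ker(\mu)$. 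Finally, as $(\mu,\eta)$ is $q$-basic and twisted by $\chi=q_M^*\tchi$ over the $q$-basic algebroid $A$, Proposition \ref{lma:thequotistwist} yields $\im(\mu)\subseteq \mathrm{Ann}(Ker(Tq_M))$ and the equivariance of the induced map $\overline{\mu}:E/K\to \mathrm{Ann}(Ker(Tq_M))$, which is condition (3).

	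Conversely, assume (1)--(3). By (1), $(\mu,\eta)$ is $\chi$-twisted with $\chi=q_M^*\tchi$, so Remark \ref{rmk:twistKeta} ensures that the conditions on $\eta$ in Proposition \ref{prop:linomqbasic} hold automatically. Condition (2) gives $K\subseteq Ker(\mu)$, while condition (3) supplies $\im(\mu)\subseteq \mathrm{Ann}(Ker(Tq_M))$ and the equivariance of $\overline{\mu}$. Hence Proposition \ref{lma:thequotistwist} applies and shows that $(\mu,\eta)$ is $q$-basic; moreover, Proposition \ref{prop:twistedquotientIM} gives that the quotient $(\tmu,\teta)$ is $\tchi$-twisted. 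It remains to check weak non-degeneracy of $(\tmu,\teta)$: since $rk(\tE)=rk(E/K)=rk(E)-rk(K)$, condition (2) is exactly the pair of conditions in \eqref{eq:quotientwnd}, so $(\tmu,\teta)$ is weakly non-degenerate. Therefore $(A,\mu,\eta,q)$ defines Dirac quotient data.
\end{proof}
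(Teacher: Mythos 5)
Your proposal is correct and follows exactly the route the paper intends: the paper offers no separate proof, stating only that the corollary ``follows from the results of subsection \ref{subsect:twistedquotientIM},'' and your write-up is precisely that assembly of Proposition \ref{prop:twistedquotientIM} (twist), equation \eqref{eq:quotientwnd} (weak non-degeneracy), and Proposition \ref{lma:thequotistwist} (basicness of $\mu$), with the right ordering in the converse (basicness first, so the quotient exists, then non-degeneracy). Note only that, like the paper's own statement, your converse implicitly assumes $(\mu,\eta)$ satisfies the IM equations when invoking Proposition \ref{lma:thequotistwist}, which is harmless here since the paper makes the same implicit assumption (contrast the higher analogue, Corollary \ref{cor:higherdiracquot}, where the IM condition is listed explicitly).
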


If $(A,\mu,\eta,q)$ defines Dirac quotient data, then
	$$ \tilde L:= \{ (Tq_M(\rho(e)),\tilde \alpha )\in T\tM\oplus T^*\tM: q_M^*\tilde \alpha = \mu(e), \ e\in E \}$$
	given as in \eqref{eq:diracofIM} is a $\tchi$-twisted Dirac structure on $\tM$ and $q_*A \simeq \tilde L$ through $q_*(\mu,\eta)$.

	Regarding integration, if $(G\rra M, \om)$ integrates $(A,(\mu,\eta))$, $G$ is source-connected and there exists a simple quotient $\q:G\to \tG$ such that $Lie(\q)=q$, then $\om$ is $\q$-basic (Prop. \ref{prop:omqbasicfrominfi}) and $(\tG,\q_*\om)$ defines a $\tchi$-twisted presymplectic groupoid \cite{bcwz} integrating the quotient twisted Dirac manifold $(\tM,\tilde L,\tchi)$. (We observe that the source-connectedness is only used to ensure $\om$ is $\q$-basic, but the non-degeneracy condition on $\om$ is automatic, see \cite{bcwz}.)

\begin{remark}\label{rmk:poissonquotdata}
A Dirac quotient data $(A,\mu,\eta,q)$ with $\eta=0$ and for which the quotient IM 2-form $(\tmu,0)$ is \emph{non-degenerate} is referred to as a \textbf{Poisson quotient data}. In this case, the Lie groupoid $(\tG,\q_*\om)$ is a \emph{symplectic} groupoid integrating the Poisson manifold $(\tM,\tilde\pi)$ where $(\tilde\pi)^{\sharp}:=\trho\circ (\tmu)^{-1}$. Notice that when the linear form $\om$ defined by $(\mu,\eta)$ is kernel-reducible, together with the corresponding simple quotient $q=q_{Ker(\om)}$, they define Poisson quotient data since $\tmu$ is necessarily non-degenerate.
\end{remark}

The remaining of this subsection is devoted to discuss examples of both Poisson and Dirac quotients and their integrations.

\begin{example}[Regular Dirac structures as Poisson quotient data]

Here we consider a Dirac structure $L$ on $M$ with zero twist. We say that $L$ is \textbf{regular} if $Ker(L)=L\cap TM$ has constant rank. In this case, $Ker(L)$ defines a regular integrable distribution. A function $f\in C^\infty(M)$ is called {\bf admissible} if there exists a vector field $X_f$ such that $(X_f,df)\in \Gamma(L)$. Admissible functions inherit a Poisson bracket defined by  $\{f,g\}=dg(X_f)$. If $Ker(L)$ defines a simple foliation, then functions on the corresponding leaf space $\tM$ identify with admissible functions. In particular, the leaf space $\tM$ has an induced Poisson structure which can be shown to be defined by Poisson quotient data as defined in Remark \ref{rmk:poissonquotdata} above. Indeed, if $A$ is a Lie algebroid equipped with an IM 2-form $(\mu,0)$ inducing $L$ as in \eqref{eq:diracofIM}, then the condition of $L$ being regular is equivalent to $\mu$ having constant rank. Since $Ker(L)=Ker(\mu^t)$ is tangent to a simple foliation, the IM 2-form $(\mu,0)$ is kernel-reducible (see Lemma \ref{lma:kerred2f}) and hence it defines Poisson quotient data. The corresponding quotient Poisson structure $\tpi$ on $\tM$ coincides with the Poisson structure defined on admissible functions of $L$. Regarding the integration of $\tilde\pi$, if $(G,\omega)$ is a presymplectic groupoid with connected source fibers integrating $L$, then an integration of $(\tM,\tilde\pi)$ is given as in Example \ref{ex:integrationkernelreducible}. The integration of the Poisson structure on admissible functions was also described in \cite{JO}.  We observe that this integration strategy is a similar but alternative version of the more specific mechanism behind \cite[Thm. 1.1]{Alv}: since, in these cases, $q$ is of pullback type, the hypothesis of $q$ admitting an integration by a simple groupoid quotient is replaced by the criterion recalled in Example \ref{ex:intqpullbacktype} and one can show that, under this hypothesis, source-connectedness is not necessary to ensure $\om$ is $\q$-basic. ($(G,\om)$ is called a "$q_M$-admissible presymplectic integration" in \cite{Alv}.) This refined criterion can be used proceduce to integrations of a variety of Poisson manifolds obtained as quotients, including Poisson homogeneous spaces, see \cite{Alv,BIL}. We stress that their criterion to produce the groupoid simple quotient $\q$ integrating the pullback type quotient $q$ is non-trivial (see Example \ref{ex:intqpullbacktype} and the cited references) and is independent of the topics covered in the present paper.

\end{example}

\begin{example}[Simple Dirac quotients]

Let $L\subset TM\oplus T^*M$ be a zero-twisted Dirac structure and $q_M:M\to \tilde{M}$ a surjective submersion. It was shown in \cite{frejlich-marcut} that if $Ker(Tq_M)\oplus 0\subset Ker(L)$, then there is a unique Dirac structure $\tilde{L}$ on $\tilde{M}$ such that $q_M$ defines a (forward) Dirac map. We explain now how this result can be seen as an instance of a Dirac quotient data. For that we assume that $q_M$ has connected fibers, thus defining a simple quotient of manifolds. Viewing $L$ as a Lie algebroid with the closed IM 2-form $\mu:L\to T^*M$ defined as the cotangent projection, we have that 
	$$Ker(\mu)=\{ (X,0)\in L:X \in TM \} \simeq Ker(L).$$
Since $Ker(Tq_M)\subseteq Ker(L)$, the vector bundle $K:=Ker(Tq_M)\oplus 0 \subset L$. Notice that $L/K\subseteq TM/Ker(Tq_M) \oplus Ann(Ker(Tq_M))$ inherits a representation $\Delta$ of $M\times_{\F(q_M)} M$ given by the restriction of $\Delta^{\nu }\oplus \Delta^{\nu^*}$ (see Examples \ref{ex:normalquot} and \ref{ex:conormalquot}). It is straightforward to verify that $(K,\Delta)$ are the components of a simple quotient $q$ of the Lie algebroid $L\to M$. Notice also that

$$rk(K)=dim(M)-dim(\tM)=rk(L)-dim(\tM)$$
	and the hypothesis $\ker(Tq_M)\oplus 0\subset Ker(L)$ implies $K =Ker(\mu)\cap \rho^{-1}(Ker(Tq_M))$. Then by Corollary \ref{cor:diracquot} the resulting quotient Lie algebroid $\tilde L$ is a Dirac structure on $\tM$ which actually coincides with the one defined in \cite{frejlich-marcut} above. The fact that $q_M$ defines a Dirac map follows directly from the definition of $\tilde L$. The integration of $\tilde L$ can be handled along the general lines below Corollary \ref{cor:diracquot}.
\end{example}

\begin{example}[Simple Poisson quotients and Libermann's theorem] 
Let $q_M: (M,\pi)\to (\tM,\tpi)$ be a Poisson map such that $q_M$ is a simple quotient. Consider $E:=Ann(Ker(Tq_M)) \subset T^*M$ the vector subbundle over $M$ and $\mu:E \to T^*M$ given by the natural inclusion map. Observe that $(\mu,\eta=0)$ is obtained by "restriction" (see Remark \ref{rmk:restrictionIM}) of the canonical symplectic structure $\om_{can}\in \Omega^2(T^*M)$ seen as a linear $2$ form. Since $q_M$ is a Poisson map, then $E$ defines a Lie subalgebroid of $\hat A:=T^*_\pi M$ which we denote $A$. Since $\om_{can}$ is $T_\pi^*M$-morphic, then $\mu$ defines a closed IM $2$-form on $A$. By its definition, $\mu$ is injective ($Ker(\mu)=0$) and hence has constant rank. Also, since $Ker(\mu^t)=Ker(Tq_M)$, there is an induced simple quotient $q:E \to T^*\tM$ such that $(A,\mu,\eta=0,q)$ define Poisson quotient data. We thus get a unique Poisson structure $\tpi'$ on $\tilde M$ which, by the fact that $q_M$ is a Poisson map, it must yield $\tpi'=\tpi$. We have then encoded the "simple Poisson quotient" map $q_M$ as a particular case of Poisson quotient data. Conversely, 
if $(A,\mu,\eta=0,q)$ define Poisson quotient data with $Ker(\mu)=0$ and $Ker(\mus)=Ker(Tq_M)$, then the underlying $q_M$ defines a Poisson map, $E\simeq Ann(Ker(Tq_M))$ and both $A$ and $\mu$ are isomorphic to the structures defined above. Integration of $(\tM,\tpi)$ can be handled along the general lines below Corollary \ref{cor:diracquot}. When $\pi=\omega_M^{-1}$ is symplectic, we can recover Libermann's theorem: if the $\omega_M$-orthogonal $(\ker(Tq_M))^{\omega_M} \subseteq TM$ is involutive, then there exists a unique Poisson structure $\tilde \pi$ on $\tilde M$ such that $q_M:(M,\omega)\to (\tilde M,\tilde \pi)$ defines Poisson morphism. In this case, we can take $E:=(\ker(Tq_M))^{\omega_M} \subset TM$, which defines a Lie subalgebroid $A\hookrightarrow TM$, and consider the closed IM 2-form $\mu(v):=\omega_M(v,-)$. It follows that $(\mu,0)$ is kernel-reducible, with induced simple quotient $q:E\to T^*\tM$ and with $Ker(\mus)=Ker(Tq_M)$, so that $(A,\mu,\eta=0,q)$ defines Poisson quotient data inducing $\tilde \pi$ on $\tM$ as in Libermann's theorem. 
\end{example}

\begin{example}[Poisson reduction from graded presymplectic supergeometry] 
Moti-vated by a supergeometric version of presymplectic reduction, Cattaneo and Zambon introduced in \cite{CZ12} a reduction scheme yielding Poisson structures as outcomes. 
The involved data can be understood as a particular case of our Poisson quotient data, as we now proceed to explain. The input data consists of a vector sub-bundle $(E\to M)\subset (T^*\hat M\to \hat M)$ and a Poisson structure $\hat \pi$ on $\hat M$. Define a linear $2$-form $\om$ on $E$ (with components $(\mu,0)$) by pullback of the canonical symplectic form $\om_{can}$ along the inclusion map $I: E\hookrightarrow T^*\hat M$. Using Lemma \ref{lem:keromlin} and denoting $Ann_{\hat M}(E)\subset T\hat M|_M$ the annihilator space of $E\subset T^*\hat M|_M$, one can verify directly that $\om=I^*\om_{can}$ has constant rank iff $T\F_M:=TM\cap Ann_{\hat M}(E)\subset TM$ defines a constant rank involutive distribution on $M$ (c.f. \cite[Prop. 5.4]{CZ12}). On the other hand, $I$ defines a Lie sub-algebroid   of $\hat A=T^*_{\hat \pi} \hat M$ iff (c.f. \cite[hypothesis of Prop. 5.15]{CZ12}) 
$$ \hat \pi^\sharp (E) \subset TM \text{ and } d \hat \pi(d\hat f_1,d\hat f_2)|_x \in Ann_{\hat M}(T\F_M) \text{ whenever $d\hat f_j|_x \in E$ and $x\in M$.} $$
We denote $A$ the induced Lie algebroid structure on $E\to M$. Assume further that $\om$ is kernel-reducible, which is equivalent to $q_M: M \to M/\F_M$ defining a simple quotient by Corollary \ref{cor:kerred2} (c.f. \cite[Prop. 5.11]{CZ12}). In this case, $(A, \mu,\eta= 0, q\equiv q_{Ker(\om)})$ define Poisson quotient data as in Remark \ref{rmk:poissonquotdata}. We thus obtain we obtain a \emph{reduced Poisson structure} $\tpi$ on $\tM=M/\F_M$ and recover the reduction procedure of \cite[Prop. 5.15]{CZ12}.
We also notice that this example yields the infinitesimal counterpart to the following situation: if a Lie groupoid $G\rra M$ is endowed with a closed multiplicative $2$-form $\om$ with constant rank such that $Ker(\om)$ defines a simple quotient of Lie groupoids $\q:G \to \tG$, then $(\tG\rra \tM, \q_*\om)$ defines a symplectic groupoid and, hence, there is an induced Poisson structure $\tpi$ on $\tM$ characterized by
$$ \tpi^\sharp(\tilde \alpha) = Tq_M(\rho(e)) \in T\tM, \text{ whenever } \mu(e) = q_M^*\tilde \alpha, \ \tilde \alpha \in T^*\tM, \ e\in E_G.$$
A particular case arises when $G\hookrightarrow (\hat G,\hat \om)$ defines an (immersed) coisotropic groupoid of a symplectic groupoid, leading infinitesimally to $E_G\subset (T^*\hat M, \omega_{can})$ defining a coisotropic submanifold (in the usual sense, see details in \cite[Prop. 4.2]{CZ12}). Coming back to the general infinitesimal case, integration of $(\tM,\tpi)$ can be handled as follows: if $(G\rra M,\om)$ integrates $(A,(\mu,0))$, $G$ is source-connected and there is a simple quotient $\q:G\to \tG$ of $G\rra M$ integrating the simple quotient $q:A\to \tA$ defined by the kernel of $(\mu,0)$, then $\om$ is $\q$-basic and $(\tG,\q_*\om)$ is a symplectic groupoid integrating $(\tM,\tpi)$. 
Finally, we mention two particular cases: Poisson and regular coisotropic submanifolds. When $M\hookrightarrow (\hat M,\hat \pi)$ defines a Poisson submanifold, then $E:= T^*\hat M|_{M}$ induces  Poisson quotient data as above such that $q_*A \simeq T^*_\pi M$. A characterizing property in this case is that the underlying component $\mu$ of $\om$ is fiber-wise surjective. Next, assume that $M\hookrightarrow (\hat M,\hat \pi)$ defines a coisotropic submanifold which is \emph{regular} in the sense that 
$$D:= \hat \pi^\sharp(Ann_{\hat M}(TM))\subset TM \text{ has constant rank,}$$
where $Ann_{\hat M}$ denotes the annihilator space in the ambient $\hat M$. If the foliation defined by $D$ is simple with underlying simple quotient $q_M:M\to \tM$, then $E:= Ann_{\hat M}(D)$ induces Poisson quotient data as above such that $q_*A\simeq T^*_{\tilde \pi} \tM$ where $\tilde \pi$ is the Poisson structure obtained from $\hat \pi$ via standard coisotropic reduction. (In the particular case in which $\hat \pi^\sharp|_{Ann_{\hat M}(TM)}$ is injective, the underlying simple quotient is of pullback type and a specialized integration treatment can be found in \cite[Prop. 3.17]{Alv}.) This particular case shows that standard coisotropic reduction can be handled with a particular instance of our Poisson quotient data.
\end{example}

\subsection{Applications to higher geometric structures}

In this final subsection, we illustrate some applications to quotients of higher geometric structures involving morphic $(k+1)$-forms with $k\geq 1$. We will focus on the notion of higher Dirac structure as defined  in \cite{BMR}, which contains the higher Poisson structures of \cite{burcabigl}. These structures can be regarded as consisting of a Lie algebroid together with a morphic $(k+1)$-form, just as ordinary Dirac and Poisson can with $k=1$, and we thus study quotient data adapted to them. This subsection thus provides a higher analogue of the previous subsections. For simplicity, we restrict ourselves to untwisted structures as they appear in the literature, although the incorporation of a twist into the discussion would be straightforward.

We begin by recalling the definitions of higher Dirac and Poisson structures. To that end, let $M$ be a manifold and consider the higher standard Courant algebroid structure on the vector bundle
$$ TM \oplus \Lambda^k T^*M \to M$$
consisting of the symmetric $\Lambda^{k-1}T^*M$-valued pairing $\langle (X,\alpha),(Y,\beta)\rangle = i_Y \alpha + i_X \beta$, the projection onto $TM$ as the anchor map, and the Courant-Dorfman bracket on sections given by
$$ \Cour{(X,\alpha),(Y,\beta)}= ([X,Y],L_X \beta - i_Yd\alpha). $$
A {\bf higher Dirac structure} (\cite{BMR}) is a subbundle $L\subset  TM \oplus \Lambda^k T^*M$ over $M$ such that its sections are closed under the Courant-Dorfman bracket and $L$ is \textbf{weakly lagrangian}, i.e. denoting $L^\perp$ the orthogonal relative to $\langle,\rangle$,
\begin{equation}\label{eq:weaklag}
\text{$L\subset L^\perp$, and $L\cap TM = L^\perp \cap TM$.}
\end{equation}
In the particular case in which $L^\perp \cap TM=0$, we say that $L$ defines a {\bf higher Poisson structure} (or "$k$-Poisson structure" in \cite{burcabigl}). Ordinary Dirac and Poisson structures correspond to the case $k=1$, see \cite{BMR}.

In analogy with ordinary Dirac structures, a higher Dirac structure is equivalent (see \cite[\S 5]{BMR}) to a pair $(A,\mu)$ where $A$ is a Lie algebroid over $M$ and $\mu$ defines a closed IM $(k+1)$-form satisfying the weakly non-degeneracy condition: 
\begin{equation}\label{eq:higherweaknond}
 \text{$Ker(\mus) = \rho(Ker(\mu))$ and $Ker(\mu)\cap Ker(\rho) = 0$.}
\end{equation}
Given the second, the first condition above results equivalent to $rk(E)=dim(M)$ for $k=1$, recovering Definition \ref{def:IMtwistednondeg}. A higher Dirac structure $L$ on $M$ determines a natural Lie algebroid structure $A$ on $E=L\to M$ and $\mu:=pr_{\Lambda^kT^*M}|_L:L \to \Lambda^kT^*M$ yields the associated weakly non-degenerate closed IM $(k+1)$-form. For a higher Poisson structure, the equivalent pair $(A,\mu)$ must satisfy the stronger non-degeneracy condition 
\begin{equation}\label{eq:highernondeg}
\text{$Ker(\mu)=0$ and $Ker(\mus)=0$ (see \cite[Prop. 4.1]{burcabigl}).}
\end{equation}
Notice that this non-degeneracy condition deviates for $k>1$ from the condition saying $\mu$ is an isomorphism (the standard case $k=1$). Observe also that the linear form $\om\in \Omega^{k+1}(E)$ defined by a higher Poisson structure is multisymplectic (see e.g. \cite{burcabigl}): closed and $i_V\om =0 \Rightarrow V=0$ (see eq. \eqref{eq:Vkercoord}).

We are now ready to introduce higher versions of Definition \ref{def:diracquotdata}.
\begin{definition}
	Consider the data $(A,\mu,q)$ where $A$ is a Lie algebroid on $E\to M$, $\mu:E\to \Lambda^kT^*M$ defines a closed IM $(k+1)$-form on $A$ and $q$ defines a simple quotient of $A$. We say that this data defines {\bf higher Dirac (resp. Poisson) quotient data} when the IM $(k+1)$-form is $q$-basic and its quotient satisfies condition \eqref{eq:higherweaknond} (resp. \eqref{eq:highernondeg}). 
\end{definition}
Denoting $\tmu$ the quotient closed IM $(k+1)$-form on $\tA=q_*A$, we get that $(\tA,\tmu)$ is equivalent to a higher Dirac structure or, respectively, a higher Poisson structure. Observe that, in the case of quotient data given by the kernel of a kernel-reducible form, the non-degeneracy condition \eqref{eq:highernondeg} is automatically satisfied by $\tmu$ (since the kernel was modded out) leading naturally to a higher Poisson quotient.

The characterization of higher quotient data in terms of components can be obtained from the general results of Section \ref{sec:vbalg}. The integration of the resulting higher structures on the quotient can be handled by our general Lie theoretic procedures of Section \ref{sec:mult}. We summarize the discussion in the following:
\begin{corollary}\label{cor:higherdiracquot}
	Let $A$ be a Lie algebroid on $E\to M$, $\mu:E\to \Lambda^kT^*M$ be the component of a closed linear $(k+1)$-form on $E$ and $q$ a simple quotient of $A$ with components $(q_M:M\to \tM,K,\Delta)$. Then, $(A,\mu,q)$ defines higher Dirac (resp. Poisson) quotient data iff 
	\begin{enumerate}
		\item $\mu$ satisfies the IM equations \eqref{eq:IMeqs} with $\eta=0$,
		\item $K=Ker(\mu)\cap \rho^{-1}(Ker(Tq_M))$ and $Ker(\mus)=\rho(Ker(\mu))+Ker(Tq_M)$ (resp. $K=Ker(\mu)$ and $Ker(\mus)=Ker(Tq_M)$) 
		\item for every $q$-invariant section $e\in \Gamma E$, $\mu(e)\in \Omega^{k}(M)$ is $q_M$-basic.
	\end{enumerate}
	
	When $(A,\mu,q)$ defines higher Dirac (resp. Poisson) quotient data, then 
	$$ \tilde L:= \{ (Tq_M(\rho(e)),\tilde \alpha )\in T\tM\oplus \Lambda^k T^*\tM: q_M^*\tilde \alpha = \mu(e) , e\in E\}$$
	defines a higher Dirac (resp. Poisson) structure on $\tM$ which is equivalent to $(q_*A,q_*(\mu,0))$ as outlined above.
	
	If $(G\rra M, \om)$ integrates $(A,(\mu,0))$, $G$ is source-connected and there exists a simple quotient $\q:G\to \tG$ of $G\rra M$ such that $Lie(\q)=q$, then $\om$ is $\q$-basic and $(\tG,\q_*\om)$ defines a``$k$-presymplectic groupoid" (see \cite[Def. 5.2]{BMR}) (resp. ``multisymplectic groupoid", see \cite{burcabigl}) integrating the higher Dirac (resp. Poisson) manifold $(\tM,\tilde L)$.
\end{corollary}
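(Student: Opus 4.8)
The plan is to assemble the statement from the vector-bundle results of Section \ref{sec:vbalg} together with the Lie-theoretic results of Section \ref{sec:mult}, mirroring the $k=1$ treatment leading to Corollary \ref{cor:diracquot}. Since the linear $(k+1)$-form with component $\mu$ is closed, its $\eta$-component vanishes, so throughout I work with the pair $(\mu,0)$. First I would dispatch the characterization of ``$q$-basic''. Because $q$ is a simple quotient of $A$, the algebroid $A$ is $q$-basic; applying Proposition \ref{prop:linomqbasic} to the form with components $(\mu,0)$, the two conditions involving $\eta$ are automatic, and what remains is precisely $K\subseteq Ker(\mu)$ together with the requirement that $\mu(e)$ be $q_M$-basic for every $q$-invariant $e$, i.e. condition \emph{(3)}. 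Condition \emph{(1)} is just the statement that $(\mu,0)$ is an IM form, which is part of the definition of quotient data; granting it and $q$-basicness, Theorem \ref{thm:Aandomqbasic} produces the quotient closed IM $(k+1)$-form $(\tmu,0)$ on $\tA=q_*A$.

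Next I would translate the weak non-degeneracy \eqref{eq:higherweaknond} of $(\tmu,0)$ into condition \emph{(2)}, which is the computational core. Using the quotient formulas \eqref{eq:tildemuetadef} for $\tmu$ and \eqref{eq:reducedanchor} for $\trho$, a fiberwise computation identifies, over $q_M(x)$, $Ker(\tmu)\simeq Ker(\mu)/K$ and $Ker(\trho)\simeq \rho^{-1}(Ker(Tq_M))/K$, so that $Ker(\tmu)\cap Ker(\trho)=0$ is equivalent to $K=Ker(\mu)\cap\rho^{-1}(Ker(Tq_M))$ (the inclusion $K\subseteq Ker(\mu)\cap\rho^{-1}(Ker(Tq_M))$ holding by $q$-basicness of $A$ via Proposition \ref{prop:Abasic}). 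For the remaining equality I would first record two inclusions: $Ker(Tq_M)\subseteq Ker(\mus)$, which follows from condition \emph{(3)} since $q_M$-basic forms are $q_M$-horizontal, and $\rho(Ker(\mu))\subseteq Ker(\mus)$, which follows from the first IM equation in \eqref{eq:IMeqs}. Then, since $Tq_M$ is surjective, the relation $\mus(v)=\tmu^\sharp(Tq_M v)\circ q$ obtained from \eqref{eq:tildemuetadef} shows $Ker(\mus)=(Tq_M)^{-1}(Ker(\tmu^\sharp))$ and $\trho(Ker(\tmu))=Tq_M(\rho(Ker(\mu)))$; a short kernel-chase then yields $Ker(\tmu^\sharp)=\trho(Ker(\tmu))$ iff $Ker(\mus)=\rho(Ker(\mu))+Ker(Tq_M)$. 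The Poisson case is the specialization $Ker(\tmu)=0\iff K=Ker(\mu)$ and $Ker(\tmu^\sharp)=0\iff Ker(\mus)=Ker(Tq_M)$, giving \eqref{eq:highernondeg}. This establishes the equivalence with \emph{(1)}--\emph{(3)}.

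For the second assertion, the equivalence of $(\tA,\tmu)$ with a higher Dirac (resp. Poisson) structure is the content of \cite{BMR} (resp. \cite{burcabigl}) once we know $\tmu$ is a closed IM $(k+1)$-form (from Theorem \ref{thm:Aandomqbasic}) satisfying the appropriate non-degeneracy (just established); the explicit $\tilde L$ is then the graph $\{(\trho([e]),\tmu([e])):[e]\in\tE\}$, and substituting $\trho([e])=Tq_M(\rho(e))$ and $q_M^*\tmu([e])=\mu(e)$ yields the displayed formula. Finally, for integration I would use that $Lie(\om)=(\mu,0)$ is $Lie(\q)=q$-basic by the first part, so by Proposition \ref{prop:omqbasicfrominfi} source-connectedness of $G$ forces $\om$ to be $\q$-basic; Proposition \ref{prop:Liequotcomm} then shows $(\q_*G,\q_*\om)$ integrates $(\tA,(\tmu,0))$, and the already-verified non-degeneracy of $(\tmu,0)$ identifies $(\tG,\q_*\om)$ as a $k$-presymplectic (resp. multisymplectic) groupoid in the sense of \cite{BMR} (resp. \cite{burcabigl}) integrating $(\tM,\tilde L)$. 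The main obstacle is the kernel-chasing in the middle paragraph: keeping track of which inclusions are automatic from $q$-basicness versus which genuinely encode non-degeneracy, and correctly handling the passage through the (merely fiberwise surjective) map $q$ and the submersion $Tq_M$; everything else is assembly of previously proven results.
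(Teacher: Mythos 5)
Your proposal is correct and takes essentially the same route as the paper, which presents Corollary \ref{cor:higherdiracquot} precisely as an assembly of Proposition \ref{prop:linomqbasic} (with $\eta=0$ automatic for closed linear forms), Theorem \ref{thm:Aandomqbasic}, the fiberwise kernel identifications $Ker(\tmu)\simeq Ker(\mu)/K$, $Ker(\trho)\simeq \rho^{-1}(Ker(Tq_M))/K$ and $Ker(\mus)=(Tq_M)^{-1}(Ker(\tmu^\sharp))$, and Propositions \ref{prop:omqbasicfrominfi} and \ref{prop:Liequotcomm} for the integration step, mirroring the $k=1$ discussion around Corollary \ref{cor:diracquot}. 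The only blemish is a minor attribution slip: the inclusion $K\subseteq Ker(\mu)$ comes from $q$-basicness of the \emph{form} via Proposition \ref{prop:linomqbasic} (which you had in fact already established in your first paragraph), while Proposition \ref{prop:Abasic} supplies only $\rho(K)\subseteq Ker(Tq_M)$.
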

The source-connectedness condition is used to ensure $\om$ is $\q$-basic, but it is not needed for $\q_*\om$ to be ``weakly non-degenerate", since $Lie(\q_*\om)$ is of higher Dirac type (see \cite[Lemma 5.1]{BMR}).




\end{document}